\theoremstyle{plain} 
\newtheorem{theorem}{Theorem}[section]
\newtheorem{lem}[theorem]{Lemma}
\newtheorem{prop}[theorem]{Proposition}
\newtheorem{thm}[theorem]{Theorem}
\newtheorem{cor}[theorem]{Corollary}
\newtheorem{remark}[theorem]{Remark}
\newtheorem{defn}[theorem]{Definition}
\numberwithin{equation}{section}
\newtheoremstyle{mytheorem}
{}
{}
{\it}
{\parindent}
{\bf}
{.}
{ }
{\thmnumber{#2.~}\thmname{#1}\thmnote{~\rm#3}}
\newtheoremstyle{myremark}
{}
{}
{\rm}
{\parindent}
{\bf}
{.}
{ }
{\thmnumber{#2.~}\thmname{#1}\thmnote{~\rm#3}}
\newtheoremstyle{myparagraph}
{}
{}
{\rm}
{\parindent}
{\bf}
{.}
{ }
{\thmnumber{#2.~}\thmname{#1}\thmnote{#3}}
\def\@secnumfont{\sc}
\def\section{\@startsection{section}{1}%
\z@{1.5\linespacing\@plus .2\linespacing}{.7\linespacing}%
{\normalfont\sc\centering}}
\def\ps@headings{\ps@empty
 \def\@evenhead{%
  \setTrue{runhead}%
  \normalfont\footnotesize
  \rlap{\thepage}\hfil
  \def\thanks{\protect\thanks@warning}%
  \leftmark{}{}\hfil}%
 \def\@oddhead{%
  \setTrue{runhead}%
  \normalfont\footnotesize\hfil
  \def\thanks{\protect\thanks@warning}%
  \rightmark{}{}\hfil \llap{\thepage}}%
\let\@mkboth\markboth}
\renewenvironment{proof}[1][\proofname]{\par
  \pushQED{\qed}%
  \normalfont \topsep6\p@\@plus6\p@\relax
  \trivlist
  \itemindent\normalparindent
  \item[\hskip\labelsep
    \bfseries
    #1\@addpunct{.}]\ignorespaces
}{%
  \popQED\endtrivlist\@endpefalse
}
\providecommand{\proofname}{Proof}
\newcommand{\Mass}{\mathds{M}}
\newcommand{\R}{\mathbb{R}}
\newcommand{\N}{\mathbb{N}}
\newcommand{\Z}{\mathbb{Z}}
\newcommand{\Haus}{\mathscr{H}}
\newcommand{\dV}{d_V\kern-1pt}
\newcommand{\de}{{\rm d}}
\newcommand{\cost}{{\mathcal C}}
\newcommand{\en}{\mathds E}
\DeclareMathOperator{\co}{co}
\DeclareMathOperator{\sign}{sign}
\DeclareMathOperator{\spt}{spt}
\newcommand{\trait}[3]{\vrule width #1ex height #2ex depth #3ex}
\newcommand{\trace}{\mathchoice%
  {\mathbin{\trait{.12}{1.2}{.03}\trait{.8}{0.09}{0.03}}}
  {\mathbin{\trait{.12}{1.2}{.03}\trait{.8}{0.09}{0.03}}}
  {\mathbin{\hskip.15ex\trait{.09}{.84}{0.02}\trait{.56}{.07}{.02}}\hskip.15ex}
  {\mathbin{\trait{.07}{.6}{.01}\trait{.4}{.06}{.01}}}}
\title{A multi-material transport problem and its convex relaxation via rectifiable $G$-currents}
\author{Andrea Marchese}
\address{A. Mar.: Dipartimento di Matematica ``F. Casorati'', Universit\`a degli Studi di Pavia}
\email{andrea.marchese@unipv.it}
\author{Annalisa Massaccesi}
\address{A. Mas.: Department of Computer Science, University of Verona}
\email{annalisa.massaccesi@univr.it}
\author{Riccardo Tione}
\address{R.T.: Mathematik Institut der Universit\"at Z\"urich}
\email{riccardo.tione@math.uzh.ch}
\begin{document}

\maketitle

{
\footnotesize

	%
{\sc Abstract.}
In this paper we study a variant of the branched transportation problem, that we call multi-material transport problem. This is a transportation problem, where distinct commodities are transported simultaneously along a network. The cost of the transportation depends on the network used to move the masses, as it is common in models studied in branched transportation. The main novelty is that in our model the cost per unit length of the network does not depend only on the total flow, but on the actual quantity of each commodity. This allows to take into account different interactions between the transported goods. We propose an Eulerian formulation of the discrete problem, describing the flow of each commodity through every point of the network. We prove existence of solutions under minimal assumptions on the cost. Moreover, we prove that, under mild additional assumptions, the problem can be rephrased as a mass minimization problem in a class of rectifiable currents with coefficients in a group, allowing to introduce a notion of calibration. The latter result is new even in the well studied framework of the ``single-material'' branched transportation.
\par
\medskip\noindent
{\sc Keywords:} Branched transportation, rectifiable currents, calibrations, multi-material transport problem. 
\par
\medskip\noindent
{\sc MSC (2010): 49Q10, 49Q15, 49Q20, 53C38, 90B06, 90B10.
\par
}
}

\section*{Introduction}

In this paper we study the \emph{multi-material transport problem} (MMTP). Informally, given two arrays
$$\mu^-=(\mu^-_1,\ldots,\mu^-_m),\quad\mu^+=(\mu^+_1,\ldots,\mu^+_m)$$ 
of discrete positive measures on $\R^d$, we study transportation networks between $\mu^-$ and $\mu^+$ of the form ${T=(T_1,\ldots,T_m)}$, where each $T_i$ is a vector valued measure on $\R^d$ (with values also in $\R^d$) having distributional divergence 
\begin{equation}\label{e:constraint}
{\rm div}(T_i)=\mu^-_i-\mu^+_i\qquad \forall\,i=1,\ldots,m\,.
\end{equation}
More precisely, we will consider only transportation networks with a certain structure, namely we require that there exists a 1-rectifiable set $E\subset\R^d$ (endowed with a unit vectorfield $\tau$, orienting the approximate tangent of $E$) and for every $i=1,\dots,m$ a subset $E_i\subset E$ and a multiplicity $\theta_i\in\Z$ such that 
$$T_i:=\theta_i\tau\Haus^1\trace E_i,$$
where the latter means that, for every continuous and compactly supported vectorfield $v$ on $\R^d$, we have
$$\langle T_i,v\rangle=\int_{E_i}\langle\tau(x); v(x)\rangle\theta_i(x)\, d\Haus^1(x)\,.$$
Then we associate to $T$: 
\begin{itemize}
\item[$\circ$] the multiplicity $\theta=(\theta_1,\dots,\theta_m)$, which is a function on $E$ with values in $\Z^m$, 
\item[$\circ$] the vector valued measure on $\R^d$ (with values in $\R^{d\times m}$)
$$T:=(\tau\otimes\theta)\Haus^1\trace E,$$ 
\item[$\circ$] and the energy
\begin{equation}\label{e_energ}
\en(T):=\int_E \cost(\theta)\,d\Haus^{1},
\end{equation}
where $\cost:\Z^m\to[0,+\infty)$ is a \emph{cost function}. 
\end{itemize}
{The MMTP consists in the minimization of the energy \eqref{e_energ} under the constraint \eqref{e:constraint}.}\\

We briefly step back for a heuristic introduction to the problem. In optimal transport problems one can focus on specific (concave) costs, which favor the aggregation of moved particles and generate optimal structures with branching. The branched transport problem is named after this peculiar phenomenon. A great interest has been devoted to branched transportation problems in the last years, providing several results concerning existence of solutions  \cite{Xia,Mad_So_Mo,Bernot2009,Ber_Ca_Mo,Bra_But_San,Pe,Co_DeRo_Mar_Stu}, regularity and stability \cite{Xia2,Bran_But_San,De_So,De_So2,Mo_San,Xia3,Bran_So,Bran_Wirth,Co_DeRo_Mar,Co_DeRo_Mar2,Co_DeRo_Mar3} and strategies to compute minimizers or to prove minimality of concrete configurations \cite{Ou_San,Bon_Le_San,Cham_Mer_Fer,Mas_Ou_Ve,Bo_Or_Ou,Bonafini,AnMa,AnMa2,Gold}. 

Nonetheless, to our knowledge only problems involving the transport of one (homogeneous) material have been studied and modeled as variational problems. These models do not apply in planning a network for the transportation of different goods, whose mutual interactions require a formulation which involves several variables. The easiest examples of natural multi-material transport problem concern mixed-use roads (where vehicles of different size and pedestrians are allowed to circulate) and the transport through vehicles for goods and passengers. 

Another notable example is given by the power line communications technology (PLC, see \cite{Fer_La_New_Swart, Do}), which uses the electric power distribution network for data transmission. PLC has been introduced in the United States of America more than a century ago and used for the communications on moving trains or, more generally, for maintenance operations of the electric power network. Recently, a special type of PLC, the broadband over power lines (BPL) is being studied and improved for high-speed data transmission, being particularly convenient for isolated areas. Electric power and data signals are impossible to be treated as a homogeneous ``material'' for several reasons the main one being the fact that the electricity and the Internet supply are subject to different costs, depending on the users' concentration and demands.

Similar problems, usually grouped under the name of multi-commodity flow problems, were studied (see e.g. \cite{Ford_Ful,E_I_Sha}) as minimization problems on graphs, often considering also constraints on the capacity of the network. Up to now, the aim of the research in this area was mainly devoted to study the complexity of the problem and to improve the efficiency of algorithms for numerical solutions.

The main results of the present paper are the existence of solutions to the minimization of the energy \eqref{e_energ} under the constraint \eqref{e:constraint}, with minimal assumptions on the cost $\cost$ (Theorem \ref{t:exist}) and, under mild additional assumptions, the equivalence between the MMTP and a mass minimization problem in a class of rectifiable currents with coefficients in a group (Theorem \ref{main}). The equivalence between the two problems allows us to introduce the notion of calibrations in this context. This was initiated in previous works (\cite{AnMa, AnMa2}) for ``single-material'' branched transportation problems and for very special choices of the cost functionals (i.e., the Steiner cost and the Gilbert-Steiner $\alpha$-mass, respectively) and the benefit of introducing calibrations in such contexts is witnessed e.g. by \cite{Mas_Ou_Ve, Bo_Or_Ou, Ca_Plu, Ca_Plu2}. Under our general assumptions on the cost, the equivalence result is new even in the case of ``single-material'' transportation problems. 
   
\section*{Acknowledgments}

The first named author is supported by the ERC-grant 306247 {\it Regularity of area minimizing
currents}. This project has received funding from the European Union's Horizon 2020 research and innovation programme under the grant agreement No 752018 (CuMiN). 
We warmly thank D. Vittone for valuable discussions and the anonymous referees for suggesting several improvements.


\section{Notation and preliminaries}
Consider a norm $\|\cdot\|$ on $\R^n$ and its dual norm $\|\cdot\|^*$. The Euclidean norm is instead denoted by $|\cdot|$ and we will always denote an orthonormal basis of $(\R^n, |\cdot|)$ by $\{{\bf e}_1,\ldots,{\bf e}_n\}$. The scalar product between vectors $v$ and $w$ of $\R^n$ is denoted $\langle v;w\rangle$. Our aim is to define ($1$-dimensional) {\em currents in $\R^d$ with coefficients in $(\R^n,\|\cdot\|)$}.  Through the paper we will always use $d$ for the dimension of the ambient space. In Section \ref{sec:2}, two quantities $m$ and $N$ will arise in our variational problem and we will work with currents with coefficients in $\R^m$ and $\R^N$, respectively. The letter $n$, used in this preliminary section, stands for the dimension of the vector space of coefficients of our currents. In the sequel one can refer to the definitions of this section replacing $n=m$ or $n=N$.\\ 

Currents with coefficients in a normed (abelian) group $(G,|\cdot|_G)$ have been introduced in \cite{Fle} and already studied by several authors (see \cite{White2,White1,De_Pauw_Hardt}). Our interest is restricted to the case $(G,|\cdot|_G)=(\R^n,\|\cdot\|)$, and we follow a ``non-standard'' approach, defining currents by duality with $\R^n$-valued differential forms in $\R^d$ (instead of completing the space of polyhedral $\R^n$-chains). With this approach we obtain an integral representation of currents, (see \eqref{e:int_repr}) which allows to introduce calibrations in a natural way.\\

We introduce now some notation about currents with coefficients in $(\R^n,\|\cdot\|)$. For the rest of this section, we will often drop the norm $\|\cdot\|$, meaning that we will write $\R^n$-valued 1-covector/differential form or 1-current with coefficients in $\R^n$ instead of $(\R^n,\|\cdot\|)$-valued 1-covector/differential 1-form or 1-currents with coefficients in $(\R^n,\|\cdot\|)$. We limit ourselves to define what is strictly necessary for the purposes of our paper. To begin with, we give the following definitions.
\begin{defn}[$\R^n$-valued 1-covector]{\rm
A map $\alpha: \R^d \times \R^n \to \R$
is an {\em $\R^n$-valued $1$-covector} in $\mathbb{R}^d$ if 
\begin{itemize}
\item[(i)] $\forall\,\tau \in \mathbb{R}^d,\ \alpha(\tau,\cdot) \in (\R^n)^*$;
\item[(ii)] $\forall\,\theta \in \R^n,\ \alpha(\cdot,\theta): \mathbb{R}^d \to \mathbb{R}$ is a ``classical'' $1$-covector.
\end{itemize}
The evaluation of $\alpha$ on the pair $(\tau,\theta)$ is also denoted by $\langle\alpha;\tau,\theta\rangle$. The space of $\R^n$-valued $1$-covectors in $\R^d$ is denoted by $\Lambda^1(\mathbb{R}^d;\R^n)$.
}\end{defn}

Observe that the space $\Lambda^1(\mathbb{R}^d;\R^n)$ is a normed vector space when endowed with the comass norm 
$$
\|\alpha\|_c:=\sup\{\|\alpha(\tau,\cdot) \|^*:\,|\tau| \leq 1, \tau \in \R^d\}.
$$
We can write the action of an $\R^n$-valued $1$-covector $\alpha$ as 
$$
\alpha(\tau,\theta)=\sum_{j = 1}^n\alpha_j(\tau)\langle{\bf e}_j;\theta\rangle,
$$ 
where, for $j=1,\ldots,n$, $\alpha_j:=\alpha(\cdot,{\bf e}_j)$ are the {\em components} of $\alpha$. 

Fix now a convex open set $U \subset \mathbb{R}^d$. It is clear that such assumption is not restrictive for most of the reasonable cases, nonetheless we remark that other choices of $U$ could change the homology class in which we set the variational problem.
\begin{defn}[$\R^n$-valued differential 1-form]{\rm 
An \emph{$\R^n$-valued differential 1-form in $U$} is a map $\omega:U \to \Lambda^1(\mathbb{R}^d;\R^n).$ We say that $\omega$ is \emph{smooth} if and only if every component $\omega_j$ belongs to $C^\infty(U;\Lambda^1(\R^d;\R))$, where the components of an $\R^n$-valued differential $1$-form are defined similarly to the components of an $\R^n$-valued $1$-covectors. We denote by $C_c^\infty(U;\Lambda^1(\mathbb{R}^d;\R^n))$ the vector space of smooth, $\R^n$-valued differential 1-forms, with compact support in $U$. 
}\end{defn}

Finally, we define the {\em comass norm} of the $\R^n$-valued differential $1$-form $\omega$ as
\[
\|\omega\|_c :=\sup_{x \in U} \|\omega(x)\|_c.
\]
The exterior derivative of an $\R^n$-valued function (0-form) is once again defined using the components.

\begin{defn}[Exterior derivative of an $\R^n$-valued 0-form]\label{2cur}{\rm
Let $\eta\in C_c^\infty(U;\R^n)$ be an $\R^n$-valued 0-form, and, for $j = 1,\dots,n$, denote with $\eta_j$ its components (i.e., $\eta_j:=\langle \eta;{\bf e}_j\rangle$). Then the {\em exterior derivative} of $\eta$ is the $\R^n$-valued differential 1-form which is defined componentwise by
\[
(\de\eta)_j := \de(\eta_j),\quad j=1,\ldots,n. 
\]
}\end{defn}

We are now ready to define 1-currents with coefficients in $\R^n$.
\begin{defn}[1-currents with coefficients in $\R^n$]{\rm
Let $T$ be a linear functional on $C_c^\infty(U;\Lambda^1(\mathbb{R}^d,\R^n))$. By definition, $T$ is continuous if $T(\omega^i)\to 0$ for every sequence of $\R^n$-valued differential 1-forms $\omega^i \in C^\infty(U;\Lambda^1(\mathbb{R}^d,\R^n))$ such that
\begin{itemize}
\item[(i)] $\spt(\omega^i) \subset K$, for some compact set $K\subset U$;
\item[(ii)] every component of $\omega^i$ converges uniformly to $0$ with all its derivatives when $i\to\infty$. 
\end{itemize}
The space of linear, continuous functionals on $C_c^\infty(U;\Lambda^1(\mathbb{R}^d,\R^n))$ is the space of \emph{$1$-currents in $U$ with coefficients in $\R^n$}
. We write $T^{i}\stackrel{*}{\rightharpoonup} T$ when the sequence of currents $(T^i)_{i\ge 1}$ with coefficients in $\R^n$ is weakly*-converging to $T$, i.e. when
\[
T^i(\omega) \to T(\omega)\quad\forall\,\omega \in C_c^\infty(U;\Lambda^1(\mathbb{R}^d,\R^n)).
\]
Furthermore, if $T$ is a 1-current with coefficients in $\R^n$, we define its {\em mass} as 
\[
{\Mass}(T):=\sup\{|T(\omega)| :\,\|\omega\|_c \leq 1\}.
\]

The {\em boundary} of $T$ is the $\R^n$-valued distribution $\partial T$ which fulfills the relation
\[
\partial T(\eta):=T(\de\eta)\quad\forall \eta \in C_c^\infty(U;\R^n).
\] 
Finally, when we mention the \emph{components} of the current $T$, we refer to the ``classical'' currents $T_j$, $j=1,\ldots,n$, defined as
\[
T_j(\omega):=T(\omega{\bf e}_j)\quad\forall\,\omega\in C^\infty(U;\Lambda^1(\R^d;\R)),
\]
where we denoted by $\omega{\bf e}_j$ the $\R^n$-valued differential 1-form whose $j$-th component coincides with $\omega$ and all other components are all null.
}\end{defn}

\begin{remark}{\rm
Analogously, the definitions of $\R^n$-valued $k$-covectors, $\R^n$-valued differential $k$-forms, and $k$-currents with coefficients in $\R^n$ are given by specifying their components, i.e. an array made of $n$ ``classical'' $k$-covectors, differential $k$-forms, and $k$-currents, respectively. Similarly, the definitions of the exterior derivative of an $\R^n$-valued differential $k$-form and of the boundary of a $k$-current with coefficients in $\R^n$ are understood.
}
\end{remark}
{\begin{remark}{\rm
Notice that, if $T$ is a current with coefficients in $\R^m$ with at most one non-trivial component $T_j$, then the mass $\Mass(T)$ differs from the classical mass of $T_j$ by a multiplicative constant: namely, the ratio between the Euclidean norm on $\R$ and the restriction of the norm $\|\cdot\|$ on span$(e_j)$. Therefore, to avoid a possibly misleading abuse of notation, we denote $\mathbb{M}$ the mass of classical currents. 
}
\end{remark}
}


We are going to consider the following special class of currents. We recall that a \emph{$1$-rectifiable set} $E\subset U$ is an $\Haus^1$-measurable set which can be covered, up to an $\Haus^1$-null subset, with the images of countably many curves of class $C^1$. A 1-rectifiable set $E$ has a well defined notion of tangent line at $\Haus^1$-a.e. point $x\in E$, which is denoted ${\rm  Tan}(E,x)$.
\begin{defn}[Rectifiable 1-currents with coefficients in $\Z^n$]{\rm 

A {\em rectifiable $1$-current in $U$, with coefficients in $\Z^n$} is a $1$-current with coefficients in $\R^n$ with finite mass admitting the integral representation
\begin{equation}\label{e:int_repr}
T(\omega)=\int_\Sigma \langle\omega(x);\xi(x),\theta(x)\rangle\,d\Haus^1(x)\quad\forall\,\omega\in C^\infty(U;\Lambda^1(\R^d;\R^n)),
\end{equation}
where $\Sigma\subset U$ is a countably $1$-rectifiable set, $\xi(x):\Sigma\to{\mathbb S}^{d-1}\cap {\rm{Tan}}(E,x)$ for $\Haus^1$-a.e. $x$ is called the orientation, and $\theta\in L^1_{\rm loc}(\Sigma;\Z^n)$ is the multiplicity. We denote such current $T$ as $\llbracket\Sigma,\xi,\theta\rrbracket$.
}\end{defn} 

We have the following characterization of the mass of a rectifiable current (see \cite[26.8]{Si} for the analogous statement for ``classical'' currents).
\begin{lem}[Characterization of the mass]
If $T=\llbracket\Sigma,\xi,\theta\rrbracket$ is a rectifiable 1-current with coefficients in $\Z^n$, then
\[
{\Mass}(T) = \int_\Sigma \|\theta(x)\|\,d\Haus^1(x).
\]
\end{lem}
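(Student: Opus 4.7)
The plan is to prove the two inequalities $\Mass(T)\le\int_\Sigma\|\theta\|\,d\Haus^1$ and $\Mass(T)\ge\int_\Sigma\|\theta\|\,d\Haus^1$ separately.

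For the upper bound, I would take any admissible test form $\omega\in C^\infty_c(U;\Lambda^1(\R^d;\R^n))$ with $\|\omega\|_c\le 1$ and estimate pointwise from the integral representation \eqref{e:int_repr}: since $|\xi(x)|=1$, the very definition of comass gives
\[
|\langle\omega(x);\xi(x),\theta(x)\rangle|\le \|\omega(x)(\xi(x),\cdot)\|^*\,\|\theta(x)\|\le\|\omega(x)\|_c\,\|\theta(x)\|\le\|\theta(x)\|.
\]
Integrating over $\Sigma$ and passing to the supremum over $\omega$ yields the desired bound.

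For the lower bound, I would build near-optimal test forms. By Hahn--Banach, for $\Haus^1$-a.e.\ $x\in\Sigma$ there exists a linear functional $\ell_x\in(\R^n)^*$ with $\|\ell_x\|^*=1$ and $\ell_x(\theta(x))=\|\theta(x)\|$; a standard measurable selection argument makes $x\mapsto\ell_x$ Borel measurable. Setting
\[
\alpha(x)(\tau,\eta):=\langle\xi(x);\tau\rangle\,\ell_x(\eta),
\]
one obtains a Borel measurable $\R^n$-valued $1$-covector field on $\Sigma$ with $\|\alpha(x)\|_c\le 1$ and $\langle\alpha(x);\xi(x),\theta(x)\rangle=\|\theta(x)\|$. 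Fix $\varepsilon>0$ and an exhausting compact set $K\subset U$; by Lusin's theorem applied componentwise and a standard mollification, one can approximate $\alpha\chi_K$ by a smooth, compactly supported $\R^n$-valued $1$-form $\omega_\varepsilon$ with $\|\omega_\varepsilon\|_c\le 1+\varepsilon$ and agreeing with $\alpha$ on $\Sigma\cap K$ outside of a set of $\Haus^1$-measure less than $\varepsilon$. Then
\[
T\!\left(\tfrac{\omega_\varepsilon}{1+\varepsilon}\right)\ge\frac{1}{1+\varepsilon}\!\left(\int_{\Sigma\cap K}\|\theta\|\,d\Haus^1-\varepsilon\,\esssup_{\Sigma\cap K}\|\theta\|-\int_{\Sigma\setminus K}\|\theta\|\,d\Haus^1\right).
\]
Letting $\varepsilon\to 0$ and $K\uparrow U$ gives $\Mass(T)\ge\int_\Sigma\|\theta\|\,d\Haus^1$.

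The main obstacle is the smoothing step: the field $\alpha$ is only Borel measurable on $\Sigma$ and, crucially, the approximation must preserve the pointwise comass bound. The safest implementation is first to truncate $\theta$ so that $\|\theta\|$ is uniformly bounded, then apply Lusin to each of the $n$ scalar components of $\ell_x$ and of $\xi(x)$, extend continuously with a Tietze-type argument, and mollify; the small loss in comass is controlled by the factor $1+\varepsilon$ and absorbed in the limit. Alternatively, one may avoid this by a direct reduction to the polyhedral case, approximating $T$ by polyhedral currents with coefficients in $\Z^n$, for which both sides of the identity are computed by inspection, and then invoking lower semicontinuity of the mass to pass to the limit.
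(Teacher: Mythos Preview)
The paper does not actually prove this lemma; it merely states it and refers to \cite[26.8]{Si} for the analogous result for classical currents. Your argument is precisely the standard one that adapts Simon's proof to the $\R^n$-valued setting, and it is essentially correct.

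Two small points on the lower bound. First, your error term $\varepsilon\,\esssup_{\Sigma\cap K}\|\theta\|$ presupposes that $\|\theta\|$ is bounded on $\Sigma\cap K$, which need not hold for $\theta\in L^1_{\rm loc}$; the truncation you mention afterwards fixes this, but it is cleaner to apply Lusin directly with respect to the finite measure $\|\theta\|\,\Haus^1\trace\Sigma$ rather than $\Haus^1\trace\Sigma$, so that the bad set automatically carries small $\|\theta\|$-weight and no truncation is needed. Second, you can in fact keep $\|\omega_\varepsilon\|_c\le 1$ exactly: after Tietze extension, compose with the radial retraction onto the (convex) unit comass ball to restore the bound, and then mollify with a nonnegative kernel of unit integral---since the comass is a norm, convolution is an average and preserves the constraint. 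With these adjustments your sketch goes through without the $1+\varepsilon$ factor. The measurable selection step is particularly easy here because $\theta$ takes values in the countable set $\Z^n$, so one may simply choose $\ell_x$ to depend only on the value $\theta(x)$.
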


For the rest of the paper, we mainly focus on rectifiable 1-currents with coefficients in $\Z^n$ whose boundary has finite mass. With a small abuse of notation we call them {\em 1-dimensional integral $\Z^n$-currents}. The following structure theorem for 1-dimensional integral $\Z^n$-currents is an immediate consequence of its counterpart for ``classical'' integral 1-currents (\cite[4.2.25]{Fe1}). See also \cite[Theorem 2.5]{Con_Gar_Mas}). Roughly speaking, it states that a 1-dimensional integral $\Z^n$-current can be thought simply as a superposition of oriented curves with (vectorial) multiplicities.

\begin{thm}[Structure of 1-dimensional integral $\Z^n$-currents]\label{thm:stru}
Let $T=\llbracket\Sigma,\tau,\theta\rrbracket$ be a 1-dimensional integral $\Z^n$-current in $U$. Then
\[
T=\sum_{k=1}^{M} \tilde T^k+\sum_{h=1}^\infty \mathring{T}^{h},
\]
where:
\begin{itemize}
\item[$\circ$] $\tilde T^k=\llbracket\Gamma_k,\tau_k,\tilde\theta_k\rrbracket$, being $\Gamma_k$ the image of an injective, Lipschitz, open curve $\gamma_k:[0,1]\to U$, $\tau_k(\gamma_k(t))=\frac{\overset{\cdot}\gamma_k(t)}{|\overset{\cdot}\gamma_k(t)|}$ for a.e. $t$, and $\tilde\theta_k\in\Z^n$ being constant on $\Gamma_k$. Moreover, for $j=1,\dots,n$ it holds 
\begin{equation}\label{boundmultipl}
\sum_{k=1}^M|(\tilde\theta_k(x))_j|\leq\frac{1}{2}\mathbb{M}(\partial T_j),\quad\mbox{for $\Haus^1$ a.e. $x\in\bigcup_{k=1}^M\Gamma_k$}.
\end{equation}
Additionally, for $j=1,\dots,n$ it holds
\begin{equation}\label{boundmultipl2}
\left|\sum_{k=1}^M\langle\tau_k(x);\tau(x)\rangle(\tilde\theta_k(x))_j\right|\leq |(\theta(x))_j|,\quad\mbox{for $\Haus^1$ a.e. $x\in\bigcup_{k=1}^M\Gamma_k$}
\end{equation}
and in the inequality above the two quantities $\sum_{k=1}^M\langle\tau_k(x);\tau(x)\rangle(\tilde\theta_k(x))_j$ and $(\theta(x))_j$ have the same sign.
\item[$\circ$] $\mathring{T}^h=\llbracket Z_h,\nu_h,\mathring\theta_h\rrbracket$, being $Z_h$ the image of a Lipschitz, closed curve $\zeta_h:[0,1]\to U$, which is injective on $(0,1)$, $\nu_h(\zeta_h(t))=\frac{\overset{\cdot}\zeta_h(t)}{|\overset{\cdot}\zeta_h(t)|}$ for a.e. $t$, and being $\mathring\theta_h\in\Z^n$ constant on $Z_h$. 
\end{itemize}
\end{thm}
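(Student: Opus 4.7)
The plan is to reduce the statement to the classical structure theorem for integer rectifiable $1$-currents (Federer 4.2.25), applied component by component, and then align the $n$ resulting decompositions into a single $\Z^n$-valued one.

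First, since any two norms on $\R^n$ are equivalent, each scalar component $T_j$ is a classical integral $1$-current in $U$: it has finite mass $\mathbb{M}(T_j)\le C\,\Mass(T)$, and by hypothesis $\mathbb{M}(\partial T_j)<\infty$. I would apply Federer's 4.2.25 to every $T_j$, obtaining a decomposition into countably many simple open injective Lipschitz arcs with $\pm 1$ multiplicity plus countably many simple closed loops with $\pm 1$ multiplicity; in particular the number of arcs in the $j$-th decomposition is bounded by $\tfrac12 \mathbb{M}(\partial T_j)$, and the total mass of the pieces sums to $\mathbb{M}(T_j)$.

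Second, I would merge the $n$ scalar decompositions into a common one. Taking the union of all arcs coming from all components and subdividing each of them into maximal sub-arcs on which the signed membership to the $j$-th Federer family is constant for every $j$ simultaneously, I obtain on every sub-arc a well-defined constant vector $\tilde\theta\in\Z^n$, whose $j$-th entry records (with sign) the contribution of that sub-arc to $T_j$. Concatenating adjacent sub-arcs that share the same constant $\tilde\theta$ and the same orientation yields a countable family of injective open Lipschitz curves $\gamma_k$ carrying constant vector multiplicity $\tilde\theta_k$, producing the summands $\tilde T^k$; the loop portions of each scalar decomposition are handled in the same way, cutting loops at their mutual intersections and at the arcs, and produce the $\mathring T^h$'s. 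Compatibility with the representation $T=\llbracket\Sigma,\tau,\theta\rrbracket$ is enforced by choosing the orientations $\tau_k,\nu_h$ to agree with $\tau$ along each sub-arc, any discrepancy being absorbed in the sign of $\tilde\theta_k$ or $\mathring\theta_h$.

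Bounds \eqref{boundmultipl} and \eqref{boundmultipl2} are then inherited componentwise from Federer's scalar estimates: for fixed $j$ and $x\in\bigcup_k\Gamma_k$, the sum $\sum_k|(\tilde\theta_k(x))_j|$ counts with weight $1$ each arc of the $j$-th Federer decomposition passing through $x$, hence is at most $\tfrac12\mathbb{M}(\partial T_j)$. The signed version \eqref{boundmultipl2} and the sign-matching claim follow because the signed sum of contributions of all arcs and loops through $x$ must reproduce $(\theta(x))_j$, and loop contributions through $x$ cannot reverse the sign of the arc contribution; therefore $\sum_k\langle\tau_k(x);\tau(x)\rangle(\tilde\theta_k(x))_j$ has the same sign as $(\theta(x))_j$ and is bounded in absolute value by it. The main obstacle is the combinatorial alignment in the previous paragraph: showing that the subdivision/concatenation can be carried out rigorously so that each $\gamma_k$ remains injective Lipschitz and the countable sum $\sum_k\tilde T^k+\sum_h\mathring T^h$ converges to $T$ in mass, which is precisely what Theorem 2.5 of \cite{Con_Gar_Mas}, cited right before the statement, handles in detail.
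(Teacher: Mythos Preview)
Your reduction to Federer's componentwise decomposition is exactly what the paper has in mind: it states the theorem without proof, calling it ``an immediate consequence'' of \cite[4.2.25]{Fe1} applied to each component $T_j$, and pointing to \cite[Theorem 2.5]{Con_Gar_Mas}.

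The one point where you work harder than necessary is the ``merging'' step. You propose to subdivide the arcs from the $n$ scalar decompositions into maximal sub-arcs on which membership to every family is simultaneously constant, in order to manufacture a genuine $\Z^n$-valued multiplicity on each piece. This is not needed, and it is where your acknowledged combinatorial obstacle comes from (two Lipschitz arcs can intersect in a set with infinitely many components, so your subdivision need not be finite). The paper's implicit route is simpler: take the arcs $\tilde T_j^k$ and cycles $\mathring T_j^h$ produced by Federer for each fixed $j$ and regard each one as a $\Z^n$-current with constant multiplicity $e_j$ (in the direction of its own orientation). Listing all of them gives $M=\sum_j M_j<\infty$ open pieces and countably many cycles, and their sum is $T$ because the $j$-th component of the sum reproduces $T_j$. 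With this choice $(\tilde\theta_k)_j$ is nonzero only for the arcs coming from the $j$-th decomposition, so \eqref{boundmultipl} is just the bound $M_j\le\tfrac12\mathbb M(\partial T_j)$, and \eqref{boundmultipl2} together with the sign claim follow exactly as you say from the mass-splitting $\mathbb M(T_j)=\sum_k\mathbb M(\tilde T_j^k)+\sum_h\mathbb M(\mathring T_j^h)$, which forces every piece through $x$ to be oriented consistently with $\theta_j(x)$. No alignment or concatenation is required.
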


\subsection{Compactness}
The following compactness theorem holds:

\begin{theorem}[Compactness]\label{thm:cptness}
Consider a sequence $(T^l)_{l\in\N}$ of 1-dimensional integral $\Z^n$-currents in $U$ such that
\[
\sup_{l\in\N}\left(\Mass(T^l)+\Mass(\partial T^l)\right)<+\infty\,.
\]
Then there exists a 1-dimensional integral $\Z^n$-current $T$ in $U$ and a subsequence $\left(T^{l_r}\right)_{r\in\N}$ such that
\[
T^{l_r}\stackrel{*}{\rightharpoonup} T\,.
\]
Moreover it holds
$$\liminf_{r\to\infty}\Mass(T^{l_r})\geq \Mass(T).$$
\end{theorem}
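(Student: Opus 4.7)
The plan is to reduce to the classical Federer--Fleming compactness theorem component by component, and then reassemble the components into a single $\Z^n$-valued current.

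First I would argue that since the norm $\|\cdot\|$ on $\R^n$ is equivalent to the Euclidean one, there is a constant $c>0$ with $\|v\| \geq c |v_j|$ for every $j$ and every $v \in \R^n$. Applied pointwise to the multiplicity and via the mass characterization lemma, this gives $\mathbb{M}(T^l_j) \leq c^{-1} \Mass(T^l)$ for each component, and by the definition of boundary of the group-valued current (acting componentwise on $\R^n$-valued $0$-forms) also $\mathbb{M}(\partial T^l_j) \leq c^{-1} \Mass(\partial T^l)$. So for each $j=1,\dots,n$ the sequence $(T^l_j)_l$ is a sequence of classical integral $1$-currents with uniformly bounded mass and boundary mass in $U$.

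Next I would apply the classical compactness theorem for integral currents (Federer--Fleming) componentwise, extracting a common subsequence $(l_r)$ such that $T^{l_r}_j \stackrel{*}{\rightharpoonup} S_j$ for each $j$, with $S_j$ a classical integral $1$-current in $U$. Testing against an $\R^n$-valued form $\omega = \sum_j \omega_j {\bf e}_j$ shows that the currents $T^{l_r}$ weak*-converge to the unique $\R^n$-valued current $T$ whose components are the $S_j$. The only nontrivial point is then to check that $T$ has a joint integral representation of the form $\llbracket \Sigma, \xi, \theta \rrbracket$ with a common rectifiable carrier and a common orientation. For this, let $\Sigma_j$ be the carrier of $S_j$ with orientation $\xi_j$ and integer multiplicity $\theta_j$, set $\Sigma := \bigcup_j \Sigma_j$ (still $1$-rectifiable), fix any Borel orientation $\xi$ of $\Tan(\Sigma,\cdot)$, and define the $\Z^n$-valued multiplicity $\theta$ by $(\theta(x))_j := \sigma_j(x)\,\theta_j(x) \mathbf{1}_{\Sigma_j}(x)$, where $\sigma_j(x) := \scal{\xi(x)}{\xi_j(x)} \in \{\pm 1\}$ almost everywhere on $\Sigma_j$ (the tangent line is $\mathcal{H}^1$-a.e. unique). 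Finite mass of $T$ follows from the finite masses of its components.

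For the lower semicontinuity of the mass I would use the duality characterization $\Mass(T) = \sup\{T(\omega) : \omega \in C_c^\infty(U;\Lambda^1(\R^d;\R^n)),\ \|\omega\|_c \leq 1\}$. For any admissible test form $\omega$, weak* convergence gives $T(\omega) = \lim_r T^{l_r}(\omega) \leq \liminf_r \Mass(T^{l_r})$, and taking the supremum over $\omega$ yields $\Mass(T) \leq \liminf_r \Mass(T^{l_r})$.

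The main obstacle I expect is the reassembly step: checking that the component-by-component integral representations can be unified on a single rectifiable carrier with a single orientation and an honest $\Z^n$-valued multiplicity, and that the resulting object actually has finite mass with respect to the group norm. Everything else is a direct translation of the classical theory via the componentwise definition of boundary and convergence, together with the norm-equivalence estimate controlling the group mass by the sum of the component masses.
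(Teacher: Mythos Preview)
Your proposal is correct and follows exactly the approach the paper indicates: apply the classical closure/compactness theorem for integral currents to each component $T^l_j$, extract a common subsequence, and reassemble on the union of the carriers, with lower semicontinuity of the mass coming from its definition as a supremum. The paper states this in one sentence and calls the rest ``straightforward''; you have simply filled in the details (norm equivalence to bound component masses, the sign-matching reassembly, and the duality argument for lower semicontinuity).
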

The proof of this result is a straightforward application of the Closure Theorem for integral currents (see \cite[4.2.16]{Fe1}) to each component $T^l_j$ of the elements of the sequence $(T^l)_{l\in\N}$. The lower semicontinuity of the mass is straightforward. By direct methods we get the existence of a mass-minimizing rectifiable current for a given boundary.

\begin{cor}\label{cor:dirmeth}
Let $T^\flat$ be a 1-dimensional integral $\Z^n$-current in $U$. Then there exists a 1-dimensional integral $\Z^n$-current $T^\sharp$ in $U$ such that
\[
\Mass(T^\sharp)=\min_{\partial T=\partial T^\flat}\Mass(T)\,,
\]
where the minimum is computed among 1-dimensional integral $\Z^n$-currents in $U$.
\end{cor}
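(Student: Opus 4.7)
The plan is to apply the direct method of the calculus of variations, using the compactness and lower semicontinuity provided by Theorem \ref{thm:cptness}. First I would fix a minimizing sequence, say a sequence $(T^l)_{l\in\N}$ of $1$-dimensional integral $\Z^n$-currents in $U$ with $\partial T^l = \partial T^\flat$ for every $l$ and
\[
\lim_{l\to\infty}\Mass(T^l) = \alpha := \inf\{\Mass(T) : T \text{ is a } 1\text{-dimensional integral } \Z^n\text{-current in } U, \ \partial T = \partial T^\flat\}.
\]
Since $T^\flat$ itself is admissible, $\alpha \le \Mass(T^\flat) < +\infty$, so up to discarding finitely many terms one has $\sup_l \Mass(T^l) < +\infty$. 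Moreover, $\Mass(\partial T^l) = \Mass(\partial T^\flat)$ is a fixed finite quantity, because $\partial T^\flat$ has finite mass by the very definition of $1$-dimensional integral $\Z^n$-current. Hence the hypotheses of Theorem \ref{thm:cptness} are met.

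Next I would invoke Theorem \ref{thm:cptness} to extract a subsequence $(T^{l_r})_{r\in\N}$ weakly*-converging to a $1$-dimensional integral $\Z^n$-current $T^\sharp$, together with the lower semicontinuity inequality
\[
\Mass(T^\sharp) \le \liminf_{r\to\infty}\Mass(T^{l_r}) = \alpha.
\]
It then remains to check that $T^\sharp$ is admissible for the minimization problem, i.e.\ that $\partial T^\sharp = \partial T^\flat$. This follows immediately from the definition of the boundary by duality: for every $\eta\in C_c^\infty(U;\R^n)$,
\[
\partial T^\sharp(\eta) = T^\sharp(\de\eta) = \lim_{r\to\infty} T^{l_r}(\de\eta) = \lim_{r\to\infty}\partial T^{l_r}(\eta) = \partial T^\flat(\eta),
\]
because $\de\eta\in C_c^\infty(U;\Lambda^1(\R^d,\R^n))$ and $T^{l_r}\stackrel{*}{\rightharpoonup}T^\sharp$. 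Consequently $T^\sharp$ is a competitor, whence $\Mass(T^\sharp)\ge\alpha$, and combining with the previous inequality yields $\Mass(T^\sharp)=\alpha$, concluding the proof.

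There is essentially no serious obstacle here, as the two ingredients doing the work, compactness and lower semicontinuity of the mass under weak* convergence, have already been established in Theorem \ref{thm:cptness}. The only minor point to verify is the stability of the boundary constraint along the weak* limit, which is immediate from the duality definition of $\partial$ and the fact that $\de\eta$ is an admissible test form.
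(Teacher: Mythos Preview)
Your proof is correct and follows exactly the approach the paper intends: the corollary is stated immediately after Theorem~\ref{thm:cptness} with the remark that it follows ``by direct methods,'' and no further proof is given. Your argument supplies precisely those details---bounded masses from the competitor $T^\flat$, fixed boundary mass, compactness, lower semicontinuity, and stability of the boundary under weak* convergence.
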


\subsection{Calibrations}
The main advantage of proving the equivalence between the MMTP and a mass minimization problem is that, in the latter case, we can make use of calibrations to prove minimality.
\begin{defn}[Calibration]\label{def:calib}{\rm
Consider a rectifiable $1$-current $T=\llbracket\Sigma,\tau,\theta\rrbracket$ in $U$, with coefficients in $\Z^n$. A smooth $\R^n$-valued differential $1$-form $\omega$ in $U$ is a \emph{calibration} for $T$ if the following conditions hold:
\begin{itemize}
\item[(i)] for a.e. $x\in\Sigma$ we have that $\langle\omega(x);\tau(x),\theta(x)\rangle=\|\theta(x)\|$;
\item[(ii)] the form is \emph{closed}, i.e., ${\rm d}\omega=0$;
\item[(iii)] for every $x\in U$, every unit vector $\tau\in\R^d$ and every $\eta\in\R^n$ we have that
\[
\langle\omega(x);\tau,\eta\rangle\le \|\eta\|\,.
\]
\end{itemize}
}\end{defn}
The existence of a calibration is a sufficient condition for minimality.
\begin{theorem}[Minimality of calibrated currents]\label{thm_calib}
Let $T=\llbracket\Sigma,\tau,\theta\rrbracket$ be a rectifiable $1$-current in $U$, with coefficients in $\Z^n$, and let $\omega$ be a calibration for $T$. Then $T$ minimizes the mass among rectifiable 1-currents in $U$, with coefficients in $\Z^n$, with the same boundary $\partial T$.
\end{theorem}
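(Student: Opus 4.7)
The plan is to use the calibration to sandwich $\Mass(T)$ and $\Mass(T')$ for an arbitrary competitor $T'=\llbracket\Sigma',\tau',\theta'\rrbracket$ with $\partial T'=\partial T$, via the chain of relations
\[
\Mass(T) \;=\; \langle T;\omega\rangle \;=\; \langle T';\omega\rangle \;\le\; \Mass(T').
\]
The first equality follows from condition (i) in Definition \ref{def:calib}, which gives pointwise $\langle\omega(x);\tau(x),\theta(x)\rangle=\|\theta(x)\|$ on $\Sigma$, so by the integral representation \eqref{e:int_repr} and the Characterization of the mass,
\[
\langle T;\omega\rangle = \int_\Sigma \langle\omega(x);\tau(x),\theta(x)\rangle\,d\Haus^1(x) = \int_\Sigma \|\theta(x)\|\,d\Haus^1(x) = \Mass(T).
\]
The last inequality comes from condition (iii): since $|\tau'(x)|=1$, we have $\langle\omega(x);\tau'(x),\theta'(x)\rangle\le\|\theta'(x)\|$ for $\Haus^1$-a.e. $x\in\Sigma'$, and integrating gives $\langle T';\omega\rangle\le\Mass(T')$.

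The central step is the middle equality $\langle T;\omega\rangle=\langle T';\omega\rangle$, which expresses that $\omega$ pairs trivially with cycles. Since $U$ is convex and $\omega$ is smooth with $\de\omega=0$ (condition (ii)), the $\R^n$-valued Poincar\'e lemma (applied componentwise) produces a smooth primitive $\eta\in C^\infty(U;\R^n)$ with $\de\eta=\omega$. To make $\eta$ legitimate as a test object, I would pick a cutoff $\chi\in C_c^\infty(U)$ equal to $1$ on an open neighborhood of $\spt(T)\cup\spt(T')$, so that $\chi\eta\in C_c^\infty(U;\R^n)$. Then
\[
\de(\chi\eta) \;=\; \chi\,\de\eta + \de\chi\cdot\eta \;=\; \chi\omega + \de\chi\cdot\eta,
\]
and pairing with $T-T'$ gives, using $\partial(T-T')=0$,
\[
0 \;=\; \partial(T-T')(\chi\eta) \;=\; (T-T')\bigl(\chi\omega+\de\chi\cdot\eta\bigr).
\]
Because $\chi\equiv 1$ on $\spt(T)\cup\spt(T')$, the contribution of $\de\chi\cdot\eta$ vanishes for both $T$ and $T'$, while $\chi\omega$ acts on each as $\omega$ itself. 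This yields $\langle T;\omega\rangle=\langle T';\omega\rangle$ and closes the chain.

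The main obstacle, and the only point that requires care, is the non-compact support of $\omega$ and of the Poincar\'e primitive $\eta$: currents in our setting are defined as functionals on compactly supported forms, so neither $\langle T;\omega\rangle$ nor $\partial T(\eta)$ is \emph{a priori} meaningful. The cutoff trick above resolves this, provided $T$ and $T'$ are compactly supported in $U$; this is the standing assumption in the MMTP (the boundary data are finite sums of Dirac masses, and competitors inherit compact support from the boundary by Theorem \ref{thm:stru}), so no loss of generality is incurred. The rest of the argument is a bookkeeping exercise that follows routinely from the componentwise Poincar\'e lemma and the definition of the boundary operator on $\R^n$-valued currents.
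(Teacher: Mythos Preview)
Your argument is correct and follows the same overall calibration sandwich as the paper, but you handle the middle equality $\langle T;\omega\rangle=\langle T';\omega\rangle$ by the \emph{dual} device: where the paper fills the cycle $T-T'$ with a $2$-current $R$ (using convexity of $U$) and then invokes $R(\de\omega)=0$, you instead apply the componentwise Poincar\'e lemma to produce a primitive $\eta$ with $\de\eta=\omega$ and use $\partial(T-T')=0$. The two routes are formally equivalent; yours has the minor advantage of not appealing to $2$-currents with coefficients in $\R^n$, which the paper only sketches in a remark, while the paper's cone argument avoids the cutoff bookkeeping. One small caveat: your justification that competitors have compact support ``by Theorem~\ref{thm:stru}'' is not quite what that theorem says; rather, compact support can be assumed without loss of generality in the applications (discrete boundary in $\R^d$), and the paper's own proof is equally informal on this point, since it pairs the non-compactly-supported $\omega$ with $R$ without comment.
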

\begin{proof}
A competitor $T'=\llbracket\Sigma',\tau',\theta'\rrbracket$ satisfies $\partial T'=\partial T$. Since $U$ is convex, there exists a $2$-dimensional current $R$ in $U$, with coefficients in $\R^n$, such that $\partial R=T-T'$.  As a consequence, together with the properties of $\omega$ listed in Definition \ref{def:calib}, we obtain that
\begin{align*}
\Mass(T) & = \int_\Sigma \|\theta(x)\|\,d\Haus^{1}(x)\\
&\stackrel{{\rm (i)}}{=}\int_\Sigma\langle\omega(x);\tau(x),\theta(x)\rangle\,d\Haus^{1}(x)= \partial R(\omega)+T'(\omega)\\
&\stackrel{{\rm (ii)}}{=}\int_{\Sigma'}\langle\omega(x);\tau'(x),\theta'(x)\rangle\,d\Haus^{1}(x)\\
&\stackrel{{\rm (iii)}}{\le}\int_{\Sigma'}\|\theta'(x)\|\,d\Haus^{1}(x)=\Mass(T')\,.\qedhere
\end{align*}
\end{proof}

\section{Multi-material transport problem}\label{sec:2}\label{nota}
In this section, we define the multi-material transport problem and we state the main result of the paper. First of all, let us introduce some notation. Our ambient is the Euclidean space $\mathbb{R}^d$. 
For $n=1,2,\dots$, we consider the following partial order on $\R^n$, where the coordinates are always expressed with respect to the standard basis $\{{\bf e}_1,\ldots,{\bf e}_n\}$. Given two vectors $x=(x_1,\dots,x_n)$ and $y=(y_1,\dots,y_n)$, we write $x\preceq y$ if and only if 
\begin{equation}\label{order}
\mbox{$|x_j| \leq |y_j|$\quad  and \quad $x_jy_j \geq 0$, \quad $\forall j=1,\dots,n$}.
\end{equation}
We say that a norm $\|\cdot\|$ on $\mathbb{R}^n$ is \emph{monotone} if $\|x\| \leq \|y\|$, for every $x \preceq y \in \mathbb{R}^n$. We say that $\|\cdot\|$ is \emph{absolute} if $\|x\| = \|\bar x\|, \forall x\in \mathbb{R}^n$, where $\bar x = (|x_1|,\dots,|x_n|)$. Now we fix an integer $m\in\N$ (which represents the number of different types of commodities involved in the transportation problem).
\begin{defn}[Multi-material cost]\label{def:multi-material_cost}
A \emph{multi-material cost} is a function $\cost:\Z^m\to[0,+\infty)$ with the following properties:
\begin{itemize}
\item[{\rm (i)}] $\cost$ is even, i.e. $\cost(x)=\cost(-x)$, and $\cost(x)=0$ if and only if $x=0$;
\item[{\rm (ii)}] $\cost$ is increasing, i.e., $\cost(x)\le\cost(y)$ for every $x\preceq y$;
\item[{\rm (iii)}] $\cost$ is subadditive, i.e. $\cost(x+y)\leq\cost(x)+\cost(y)$ for every $x,y\in \Z^m$.
\end{itemize}
In order to prove the equivalence between the MMTP and a mass minimization problem, we will replace {\rm(iii)} with a stronger property, namely
\begin{itemize}
\item[${\rm (iii')}$] there exists a monotone norm $\|\cdot\|_{\star}$ on $\R^m$ with respect to which $\cost$ is sublinear, i.e. $\frac{\cost(x)}{\|x\|_\star}\le\frac{\cost(x')}{\|x'\|_{\star}}$ for every $x,x'\in \Z^m\setminus\{0\}$ with $x'\preceq x$.
\end{itemize} 
\end{defn}

\begin{remark}[Extension of multi-material costs]
{\rm If $\cost$ is defined only on a rectangle $$R:=[-a_1,a_1]\times\dots,\times[-a_m,a_m]\subset \Z^m$$ and it satisfies {\rm(i), (ii), (iii)} (respectively {\rm(i), (ii), ${\rm (iii')}$}) on $R$, then one can define a new cost 
$\bar\cost:\Z^m\to[0,+\infty]$ defining
$$\bar\cost(x):=\max_{y\in R}\{\cost(y):y\preceq x\}.$$
One can see immediately that the cost $\bar\cost$ satisfies {\rm(i), (ii), (iii)} (respectively {\rm(i), (ii), ${\rm (iii')}$).}}
\end{remark}

%

A multi-material cost induces a functional on 1-dimensional integral $\Z^m$-currents, that we denote $\en$. Given a 1-dimensional integral $\Z^m$-current $T=\llbracket \Sigma,\tau,\theta\rrbracket$, we denote its \rm{energy} by 

\begin{equation}\label{EN}
\en(T):=\int_\Sigma \cost(\theta)\,d\Haus^{1}\,.
\end{equation}

%
Let us now fix a rectifiable 0-current $\mathcal{B}$ on $\R^d$ with coefficients in $\Z^m$, which is the boundary of a 1-dimensional integral $\Z^m$-current. In particular $\mathcal{B}$ is represented by the discrete $\R^n$-valued measure 
\begin{equation}\label{bordo}
\mathcal{B}=\sum_{\ell=1}^M \eta_\ell\delta_{p_\ell},
\end{equation}
where $p_\ell$ are points in $\R^d$, $\eta_\ell=(\eta_{\ell,1},\ldots,\eta_{\ell,m})\in \Z^m$ and $\sum_{\ell=1}^M \eta_\ell=(0,\dots,0)\in\Z^m$. 

If we read the problem as an optimization problem for the transportation of different goods among factories, the interpretation of ${\mathcal{B}}$ as a given datum should be the following. At each of the $M$ points $p_\ell$ a certain amount of some of the $m$ materials is produced or requested. A negative sign in the coefficient $\eta_{\ell,i}$ represents the fact that an amount $|\eta_{\ell,i}|$ of the material indexed by $i$ is produced by the factory located at the point $p_\ell$, while a positive sign represents the fact that the corresponding amount is requested by that factory.\\


We are now able to state the \emph{multi-material transport problem}. Let $\cost$ satisfy properties {\rm(i), (ii), (iii)} of Definition \ref{def:multi-material_cost}.
\vspace{0.4cm}
\begin{itemize}
\item[MMTP:] Among all 1-dimensional integral $\Z^m$-currents $T=\llbracket\Sigma,\tau,\theta\rrbracket$ in $\R^d$ such that $\partial T=\mathcal{B}$, find one which minimizes the energy $\en(T)$. 
\end{itemize}

\vspace{0.4cm}
\begin{theorem}[Existence of solutions]\label{t:exist}
The MMTP admits a solution.
\end{theorem}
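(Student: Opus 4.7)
The plan is to apply the direct method. I would fix a minimizing sequence $(T^l)_{l\in\N}$ of $1$-dimensional integral $\Z^m$-currents with $\partial T^l=\mathcal{B}$ and $\en(T^l)\to\inf\en$, extract a weakly-$*$ convergent subsequence via Theorem~\ref{thm:cptness}, and check that the limit has boundary $\mathcal{B}$ and attains the infimum, the latter requiring a lower-semicontinuity statement for $\en$.

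For compactness, the boundary masses are already fixed, so it remains to bound $\Mass(T^l)$ uniformly. Properties~(i) and~(ii) in Definition~\ref{def:multi-material_cost} imply that $c_0:=\min_{1\le j\le m}\cost({\bf e}_j)>0$ is a lower bound for $\cost$ on $\Z^m\setminus\{0\}$: indeed any such $\theta$ satisfies $\sign(\theta_j){\bf e}_j\preceq\theta$ for any index $j$ with $\theta_j\neq 0$. This gives $\Haus^1(\Sigma^l)\le\en(T^l)/c_0$, so the length of the support is controlled. To bound the multiplicity, I would decompose each $T^l$ via the structure Theorem~\ref{thm:stru} into a simple-curve part $\sum_k\tilde T^{k,l}$ and a cycle part $\sum_h\mathring T^{h,l}$. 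Inequality~\eqref{boundmultipl} controls the pointwise multiplicity of the simple-curve part by a constant depending only on $\mathcal{B}$. The main technical hurdle at this stage concerns the cycles, whose mass is not a priori controlled by $\en(T^l)$ alone: a cycle can carry large multiplicity while almost cancelling against simple curves, leaving a small net contribution to $\en$. The way forward is to modify each $T^l$ within its equivalence class (same boundary, no larger energy) so that cycles are eliminated or made small; the monotonicity and subadditivity of $\cost$ provide leverage here, although one has to be delicate since removing a cycle can locally increase the net multiplicity.

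Once a uniform mass bound is established, Theorem~\ref{thm:cptness} delivers a limit $T^\sharp$ along a subsequence, and the continuity of $\partial$ in the weak-$*$ topology gives $\partial T^\sharp=\mathcal{B}$. The conclusion rests on the lower semicontinuity of $\en$, i.e.\ $\en(T^\sharp)\le\liminf_r\en(T^{l_r})$, which I expect to be the principal difficulty. For concave costs like the Gilbert--Steiner $\alpha$-mass this is classical; under only assumptions~(i)--(iii) one natural strategy is to approximate $\cost$ from below by a monotone sequence of simpler monotone subadditive costs for which l.s.c.\ is already known and pass to the limit by monotone convergence. A more direct route is a slicing/multiplicity-counting argument that uses Theorem~\ref{thm:stru} together with subadditivity of $\cost$ to estimate $\en$ on each curve of the decomposition and then invokes Fatou's lemma to transfer the estimate to the limit. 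With l.s.c.\ in hand, combining it with the minimizing property of $(T^l)$ and the boundary constraint yields that $T^\sharp$ is the desired minimizer, completing the proof.
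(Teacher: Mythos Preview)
Your approach is essentially the same as the paper's: direct method, cycle removal via the structure theorem to get a uniform mass bound, compactness, and lower semicontinuity of $\en$. Two remarks are in order.

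First, your worry that ``removing a cycle can locally increase the net multiplicity'' is unfounded, and this is precisely what \eqref{boundmultipl2} in Theorem~\ref{thm:stru} is there for: the multiplicity $\tilde\theta^l$ of the simple-curve part satisfies $\tilde\theta^l\preceq\theta^l$ $\Haus^1$-a.e., so by monotonicity of $\cost$ one has $\en(\tilde T^l)\le\en(T^l)$ outright. There is no delicacy here; the replacement $T^l\mapsto\tilde T^l$ preserves the boundary, does not increase the energy, and by \eqref{boundmultipl} has pointwise multiplicity bounded by a constant depending only on $\mathcal{B}$. Combined with your lower bound $c_0$ on $\cost$, this yields $\Mass(\tilde T^l)\le C\,\en(\tilde T^l)$ directly.

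Second, the paper does not attempt to prove the lower semicontinuity of $\en$ under (i)--(iii); it simply cites it from \cite[\S 6]{White1} (see also \cite{Co_DeRo_Mar_Stu} and the authors' own \cite{Mar_Mas_Stu_Ti}). Your proposed strategies (monotone approximation of $\cost$, or a slicing/Fatou argument) are plausible but would require substantial work to make rigorous; since the result is available in the literature, the paper treats it as a black box.
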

\begin{proof}
{The proof goes through the Direct Method of the Calculus of Variations}. The lower semicontinuity of the functional $\en$ is stated in \cite[\S 6]{White1} (see also \cite{Co_DeRo_Mar_Stu}). In \cite{Mar_Mas_Stu_Ti}, we prove the lower semicontinuity in a more general framework in order to obtain the existence of solutions to a ``continuous'' version of the MMTP. The only issue is that a minimizing sequence $\{T^l\}_{l\in\N}$ for the MMTP does not necessarily have equi-bounded masses, hence it is not possible to apply Theorem \ref{thm:cptness} directly to obtain a minimizer. Nevertheless one can obtain a uniform bound on the masses ``removing the cycles'' from the $T^l$'s. Namely, writing each $T^l=\llbracket\Sigma^l,\tau^l,\theta^l \rrbracket$ according to Theorem \ref{thm:stru} as 
$$T^l=\sum_{k=1}^{M(l)} \tilde {(T^l)}^k+\sum_{h=1}^\infty \mathring{(T^l)}^h,$$
and denoting $\tilde{T^l}=\llbracket\tilde\Sigma^l,\tau^l,\tilde\theta^l \rrbracket:=\sum_{k=1}^{M(l)} \tilde {(T^l)}^k$,
we get by \eqref{boundmultipl2} that for every $l\in\N$ it holds $\tilde \theta^l\preceq\tilde \theta^l$ $\Haus^1$-a.e. on $\Sigma^l$, hence, by the monotonicity of $\cost$, we deduce that $\en(\tilde{T^l})\leq \en(T^l)$. Observing that $\mathcal{B}=\partial T^l=\partial \tilde{T^l}$, we have that $\{\tilde{T^l}\}_{l\in\N}$ is also a minimizing sequence. Moreover, {\eqref{boundmultipl} implies that $\|\tilde\theta^l\|$ is bounded by a uniform constant $M$, $\Haus^1$ a.e. on $\tilde\Sigma^l$, and by properties (i) and (ii) of Definition \ref{def:multi-material_cost} the cost of each non-zero element of $\Z^m$ is bounded from below by a value $P$. Hence the ratio $\sfrac{\|\tilde\theta^l\|}{\cost(\tilde\theta^l)}$ is bounded by $C:=\frac{M}{P}$ $\Haus^1$ a.e. on $\tilde\Sigma^l$. Integrating on $\tilde\Sigma^l$, we deduce that $\Mass(\tilde{T^l})\leq C \en(\tilde{T^l})$. } This allows to apply Theorem \ref{thm:cptness} and to find a subsequential limit, which is a solution to the MMTP.
\end{proof}

We remark here that, under the additional assumption ${\rm (iii')}$ on the cost functional, the existence is also a trivial consequence Theorem \ref{main} below.\\

The main result of the paper is the fact that, with the additional assumption ${\rm (iii')}$ on the cost functional, the MMTP is equivalent to the superposition of a certain number of mass minimization problems among 1-dimensional integral currents, with coefficients in a group (which is larger than $\Z^m$). Introducing such problems requires some additional notation.

Let $\mathcal{B}$ be as in \eqref{bordo}. For $i=1,\dots, m$, let 
\begin{equation}\label{def_enns}
N_i:=\frac{1}{2}\sum_{\ell=1}^M|\eta_{\ell,i}|\quad \mbox{and let \quad $N:=\sum_{i=1}^m N_i$}.
\end{equation}

Note that, since $|\eta_{\ell,i}| \in\mathbb{N}$, $\forall \ell,i$, then $N_i$ are natural numbers, for every $i$, {and so is $N$}. Since $i$ represents an index for the $m$ different types of materials, then $N_i$ should be thought as the total amount of the production of the material corresponding to the index $i$ among all factories. Similarly $N$ represents the total amount of production of the union of all materials. We associate to $\mathcal{B}$ a rectifiable $0$-current, with coefficients in $\Z^N$ with the following procedure. Heuristically, for every $i=1,\dots, m$ we will give different labels to each of the $N_i$ copies produced of the $i$-th material, so that in total we will have $N$ different labels. Note that this is in a certain sense an ``unnatural'' operation, since in the original problem different copies of the same material are indistinguishable. Let us explain firstly how we assign the labels. We begin by splitting the set $\{1,\dots,N\}$ into an ordered sequence made by the $m$ groups $\{1,\dots,N_1\}$, $\{N_1+1,\dots,N_1+N_2\},\dots,\{N-N_m+1,\dots,N\}$. We will use the $i$-th group ($i=1,\dots,m$) as the set of labels for the $N_i$ copies produced of the $i$-th material. Hence to every index $j\in\{1,\dots,N\}$ we associate the corresponding $i(j)$ which is describing to which of the $m$ materials the label $j$ corresponds.
%
Formally, for every $j\in\{1,\dots,N\}$, we denote $i(j)$ the first index $i$ such that $N_1+\dots+N_i\geq j$. 
Now we want to identify one of the points $\{p_\ell\}_{\ell=1}^M$ in which we will think that the copy of the $i(j)$-th material, labeled with $j$, is produced. Therefore we let $\bar j:=\sum_{k=0}^{i(j)-1}N_k$ (observe that $j-\bar j$ describes the position of the index $j$ in the $i(j)$-th group defined above) and moreover we let $\ell^-(j)$ be the first index $\ell$ such that 
$$\sum_{\eta_{\ell,i(j)}<0} |\eta_{\ell,i(j)}|\geq j-\bar j.$$

Similarly, let $\ell^+(j)$ be the first index $\ell$ such that 
$$\sum_{\eta_{\ell,i(j)}>0} \eta_{\ell,i(j)}\geq j-\bar j.$$

Finally we define 
$$\mbox{$P(j):=p_{\ell^-(j)}$ \quad and \quad $D(j):=p_{\ell^+(j)}$.}$$ 
Since it is too restrictive to assume that, for every $j=1,\dots, N$, the $i(j)$-th material produced in $P(j)$ will be sent to the point $D(j)$, we need to allow some ``reshuffling''. To this aim, we let $\sigma=(\sigma_1,\dots,\sigma_m)\in\mathcal S_{N_1}\times\dots\times\mathcal S_{N_m}$, where $\mathcal S_{q}$ is the group of permutations on $q$ elements. With a small abuse of notation, we write $\sigma(j)$ for the number $\bar j+\sigma_{i(j)}(j-\bar j)$. Note that each $\sigma_i$ ($i=1,\dots,m$) is thought as a permutation acting on the $i$-th group defined above.

Lastly we define our rectifiable 0-current, with coefficients in $\Z^N$ as
\begin{equation}\label{bordo_nuovo}
{\mathcal{B}}_\sigma:=-\sum_{j=1}^N e_j\delta_{P_j}+\sum_{j=1}^N e_j\delta_{D_{\sigma(j)}}.
\end{equation}

Observe that every fixed permutation prescribes in which of the $M$ points the copy of each labeled material will be moved. Since in our transportation problem it is not natural to prescribe such assignments, we will let the permutation vary.

Now we can state our alternative formulation of the multi-material transport problem, which is simply a \emph{mass-minimization problem}:
\begin{itemize}
\item[MMP:] Let $\|\cdot\|$ be a norm on $\R^N$. Among all $\sigma\in\mathcal S_{N_1}\times\dots\times\mathcal S_{N_m}$ and among all 1-dimensional integral $\Z^N$-currents $\bar T=\llbracket\bar\Sigma,\bar\tau,\bar\theta\rrbracket$ in $\R^d$ such that $\partial\bar T=\mathcal{B}_\sigma$ (defined in \eqref{bordo_nuovo}), find one which minimizes the mass $\Mass(\bar T)$, where the mass is computed with respect to the norm $\|\cdot\|$. 
\end{itemize}

The main result of the paper is the following
\begin{theorem}[Equivalence between MMTP and {MMP}]\label{main}
Let $\mathcal{B}$ be as in \eqref{bordo} and $N$ as in \eqref{def_enns}. Then, for every $\cost$ as in Definition \ref{def:multi-material_cost}, satisfying {\rm (i),(ii),${\rm (iii')}$}, there exists a norm $\|\cdot\|$ on $\R^N$ such that the problems MMTP and MMP are equivalent. Namely the minima are the same and moreover there is a canonical way to construct a solution of the MMTP from a solution of the MMP and vice versa.
\end{theorem}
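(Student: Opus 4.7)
The plan is to extend $\cost$ from $\Z^m$ to a monotone norm $\|\cdot\|_\cost$ on the whole of $\R^m$, to introduce on $\R^N$ a norm $\|\cdot\|$ by duality so that $\|\bar\theta\| = \cost(\pi(\bar\theta))$ on ``natural lifts'' and $\|\bar\theta\| \geq \cost(\pi(\bar\theta))$ in general, and finally to establish the two inequalities between the minima by projection and coherent lifting respectively. Combining (iii) and (iii') yields $\cost(n\theta) = n\,\cost(\theta)$ for every $n \in \N$ and $\theta \in \Z^m$: the $\leq$ half comes from iterated subadditivity, while the $\geq$ half comes from $\theta \preceq n\theta$, the $1$-homogeneity of $\|\cdot\|_\star$, and (iii'). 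By evenness (i), subadditivity (iii), and this integer-ray homogeneity, $\cost$ extends uniquely by density to a monotone norm $\|\cdot\|_\cost$ on $\R^m$ with $\|\theta\|_\cost = \cost(\theta)$ for $\theta \in \Z^m$.

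Set $G_i := \{j : i(j)=i\}$, call a \emph{natural lift} any $\bar\theta \in \{-1,0,1\}^N$ whose nonzero entries inside each $G_i$ all share a common sign, and let $\pi : \R^N \to \R^m$ be the group-wise sum. Define
\[
\|\bar\theta\| \ :=\ \sup\Big\{\langle \eta, \bar\theta\rangle : \eta \in \R^N,\ \langle \eta, \bar\theta'\rangle \leq \cost(\pi(\bar\theta')) \text{ for every natural lift } \bar\theta'\Big\}.
\]
The constraints coming from $\pm\mathbf{e}_j$ give $|\eta_j| \leq \cost(\mathbf{e}_{i(j)})$ and hence finiteness; in the other direction, monotonicity (ii) ensures that $\pm \cost(\mathbf{e}_{i(j)})\,\mathbf{e}_j$ is itself admissible (for any natural lift $\bar\theta'$ with $\bar\theta'_j = +1$ one has $\mathbf{e}_{i(j)} \preceq \pi(\bar\theta')$), so $\|\cdot\|$ is positive definite and is a genuine norm. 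Two key properties follow: \textbf{(A)}~$\cost(\pi(\bar\theta)) \leq \|\bar\theta\|$ for every $\bar\theta \in \R^N$, obtained by restricting the sup to $\eta = \pi^{*}w$ with $\|w\|_{\cost,*} \leq 1$, since then $\langle \eta, \bar\theta'\rangle = \langle w, \pi(\bar\theta')\rangle \leq \|\pi(\bar\theta')\|_\cost = \cost(\pi(\bar\theta'))$ and the sup over such $w$ equals $\|\pi(\bar\theta)\|_\cost = \cost(\pi(\bar\theta))$; \textbf{(B)}~$\|\bar\theta\| = \cost(\pi(\bar\theta))$ for every natural lift $\bar\theta$, with ``$\leq$'' immediate from the definition and ``$\geq$'' a special case of~(A).

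For the inequality MMP $\geq$ MMTP, I would take any competitor $(\sigma, \bar T)$ with $\bar T = \llbracket \bar\Sigma, \bar\tau, \bar\theta\rrbracket$ and set $T := \llbracket \bar\Sigma, \bar\tau, \pi(\bar\theta)\rrbracket$; unwinding the definitions of $P(j)$, $D(j)$ and $i(j)$ shows $\pi_{*}\mathcal{B}_\sigma = \mathcal{B}$ for every $\sigma$, so $T$ is an MMTP competitor and (A) gives $\en(T) \leq \Mass(\bar T)$. For the converse, starting from an MMTP minimizer $T$ I would use (ii) together with \eqref{boundmultipl2} to reduce to the case where each component $T_i$ is acyclic, and then apply Theorem~\ref{thm:stru} to decompose each $T_i$ into $N_i$ simple unit-multiplicity paths joining the sources of material $i$ to its sinks. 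Choosing permutations $\sigma_i \in \mathcal{S}_{N_i}$ that match each label $j \in G_i$ bijectively to one of those paths (so that $P(j)$ and $D(\sigma_i(j))$ are its endpoints) and replacing the scalar multiplicity $\pm 1$ on the path assigned to $j$ by the vector multiplicity $\pm \mathbf{e}_j \in \Z^N$, I obtain an integral $\Z^N$-current $\bar T$ with $\partial \bar T = \mathcal{B}_\sigma$. At every $x$ the multiplicity $\bar\theta(x)$ has entries in $\{-1,0,1\}$ (different labels live on different paths) and coherent signs within each group (the paths carrying material $i$ are oriented consistently with $\sign(\theta_i)$), so $\bar\theta(x)$ is a natural lift of $\theta(x)$; property~(B) then gives $\|\bar\theta(x)\| = \cost(\theta(x))$ pointwise and $\Mass(\bar T) = \en(T)$ by integration. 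The main obstacle is producing this \emph{coherent} decomposition of each $T_i$: Theorem~\ref{thm:stru} by itself does not guarantee that overlapping paths carry orientations consistent with $\sign(\theta_i)$, and enforcing this requires a greedy, Smirnov-type extraction---peeling off, one at a time, a maximal oriented simple path following the direction of positive flow---which is possible thanks to the acyclicity reduction performed at the start.
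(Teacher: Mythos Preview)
There is a genuine gap at the very first step. You claim that $\cost(n\theta)=n\,\cost(\theta)$ for all $n\in\N$ and $\theta\in\Z^m$, deriving the ``$\geq$'' half from $\theta\preceq n\theta$ together with ${\rm(iii')}$. But ${\rm(iii')}$ reads $\frac{\cost(x)}{\|x\|_\star}\le\frac{\cost(x')}{\|x'\|_\star}$ whenever $x'\preceq x$; with $x'=\theta$ and $x=n\theta$ this gives $\cost(n\theta)\le n\,\cost(\theta)$, the \emph{same} direction as subadditivity, not the reverse. In fact the reverse inequality is simply false under the hypotheses: for the Steiner cost $\cost(z)=1$ for $z\neq 0$ (which satisfies (i),(ii),${\rm(iii')}$ with any monotone $\|\cdot\|_\star$) one has $\cost(2)=1<2=2\,\cost(1)$. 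So $\cost$ does not extend to a norm $\|\cdot\|_\cost$ on $\R^m$, and every subsequent use of $\|\cdot\|_\cost$ and its dual collapses.

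This matters precisely at the hard step. Your norm $\|\cdot\|$ on $\R^N$ is, by bipolarity, the Minkowski functional of the convex hull of $\{\bar\theta'/\cost(\pi(\bar\theta')):\bar\theta'\text{ natural lift}\}$, which is exactly the set $C$ the paper constructs in Theorem~\ref{thm:norm}. The delicate point---property (3) there, your ``$\geq$'' in (B)---is showing that each $q_{A,B}$ lies on $\partial C$ rather than in its interior; equivalently, that for each natural lift $\bar\theta$ there exists a linear functional $\eta$ satisfying $\langle\eta,\bar\theta'\rangle\le\cost(\pi(\bar\theta'))$ for all lifts $\bar\theta'$ yet achieving $\langle\eta,\bar\theta\rangle=\cost(\pi(\bar\theta))$. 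You try to produce such an $\eta$ by pulling back a $\|\cdot\|_\cost$-dual supporter through $\pi$, but since $\|\cdot\|_\cost$ does not exist this produces nothing. The paper instead proves (3) directly by a combinatorial argument: given a putative convex combination $tq_{A,B}=\sum\lambda_k q_{D^k}$ with $t>1$, it truncates the $D^k$ to the support of $A-B$, flips signs to align with the correct orthant, and then applies ${\rm(iii')}$ \emph{pointwise} to the resulting vectors $y^k\preceq x$ in $\Z^m$ to force $t\le 1$. There is no shortcut through a norm on $\R^m$; some argument of this type is genuinely needed.
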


\begin{remark}[Irrigation-type problems]
{\rm A corollary of the proof of Theorem \ref{main} is the following. If there exists one index $\bar\ell\in\{1,\dots,M\}$ such that $|\eta_{\bar\ell,i}|=\sum_{\ell\neq\bar\ell} |\eta_{\ell,i}|$, for every $i=1,\dots,m$, then in the MMP it is not necessary to minimize among the permutations $\sigma$ (i.e. the minimum is the same for every permutation). In the ``single-material'' case, the assumption corresponds to the case called ``irrigation problem'', where the initial (or the target) measure is a Dirac delta.}
\end{remark}

\begin{remark}[MMP as a Lagrangian formulation of the MMTP]
{\rm Recalling the interpretation of the coordinates of $\R^N$ as labels for different copies of the $m$ materials, we can view the MMP as a version of the MMTP where one can trace the trajectory of every particle of each type of material. The equivalence between Eulerian formulations (describing the flow of particles at every point) and Lagrangian formulations (describing the particles' trajectories) of branched transportation problem is an interesting problem in general (see e.g. \cite{Bran_WirthUP}) which is based on a profound result of Smirnov on the structure of classical normal 1-currents (see \cite{Smir}). For our discrete problem, instead, the equivalence is a simple consequence of Theorem \ref{thm:stru}. For the sake of brevity, we will not pursue this in the present paper.}
\end{remark}

\section{Equivalence between MMTP and MMP}\label{sec:3}

The aim of this section is to establish the equivalence between the MMTP and the MMP of Section \ref{sec:2}. This follows from Theorem \ref{thm:stru}, once we find a norm $\|\cdot\|$ on $\R^N$ which is monotone and satisfies, {for every $\sigma \in \mathcal{S}_{N_1}\times\dots\times\mathcal S_{N_m}$,}
\begin{equation}\label{eqn_main}
\cost(\theta_1,\dots,\theta_m)=\Big\|\sign{(\theta_1)}\sum_{j=1}^{|\theta_1|}e_{{\sigma(j)}}+\sign{(\theta_2)}\sum_{j=N_1+1}^{N_1+|\theta_2|}e_{{\sigma(j)}}+\dots+\sign{(\theta_m)}\sum_{j=N-N_{m}+1}^{N-N_{m}+|\theta_m|}e_{{\sigma(j)}}\Big\|,
\end{equation}
where $N$ and $N_i$ $(i=1,\dots, m)$ are defined in \eqref{def_enns}. The existence of such norm would imply that the \emph{cost} of the transportation of a vector of materials $(\theta_1,\dots,\theta_n)$ along a stretch of the network corresponds to the \emph{mass} of the current with coefficients in $\R^N$ that we will associate to that stretch of the network.

The existence of such norm is proved in Theorem \ref{thm:norm}. Firstly we show how to prove Theorem \ref{main} using the existence of $\|\cdot\|$.
\begin{proof}[Proof of Theorem \ref{main}] Fix $\mathcal{B}$ as in \eqref{bordo}. We divide the proof in two steps.\\

{\bf{Step 1: from MMTP to MMP.}} Let $T:=\llbracket\Sigma,\tau,\theta\rrbracket$ be a 1-dimensional integral $\Z^m$-current which is a competitor for the MMTP. The aim of this step is to construct from $T$ a competitor $\bar T$ for the MMP ``associated'' to $\mathcal{B}$, such that $\Mass(\bar T)\leq \en (T)$. 

Consider the components of $T$ {(see Figure \ref{a})} 
$$T_1:=\llbracket\Sigma,\tau,\theta_1\rrbracket,\dots,T_m:=\llbracket\Sigma,\tau,\theta_m\rrbracket.$$ 

\begin{figure}[h]
\scalebox{0.75}{
\begin{tikzpicture}
   \begin{axis}[axis x line=none,axis y line=none,xmin=-4.8,xmax=3.5,
    ymin=-4,ymax=4,]

\addplot[]coordinates{
(-1,2) (1,2) (0,3) (-1,2)
};    

\addplot[->]coordinates{
(1,2) (0,2)
}node[below]{$(1,0)$};
    
      \addplot[]coordinates{
(-3,3) (-2,1.5) (-1,0) (1,0)};

 \addplot[]coordinates{
(-3,1.5) (-2,1.5) (-1,0) (1,0)};

 \addplot[]coordinates{
(-3,-2) (-1,0) (1,0)};

 \addplot[]coordinates{
(1,0) (2,2.5)};

 \addplot[]coordinates{
(1,0) (2,0)};

 \addplot[]coordinates{
(1,0) (2,-1.5)};

\addplot[]coordinates{
(-3,3)}
node[circle,fill=black,inner sep=0pt,minimum size=3pt]{};
\addplot[]coordinates{
(-3,3)}
node[left]{$(-1,0)$};

\addplot[]coordinates{
(-3,1.5)}
node[circle,fill=black,inner sep=0pt,minimum size=3pt]{};
\addplot[]coordinates{
(-3,1.5)}
node[left]{$(-1,0)$};

\addplot[]coordinates{
(-3,-2)}
node[circle,fill=black,inner sep=0pt,minimum size=3pt]{};
\addplot[]coordinates{
(-3,-2)}
node[left]{$(0,-1)$};

\addplot[]coordinates{
(2,2.5)}
node[circle,fill=black,inner sep=0pt,minimum size=3pt]{};
\addplot[]coordinates{
(2,2.5)}
node[right]{$(0,1)$};

\addplot[]coordinates{
(2,0)}
node[circle,fill=black,inner sep=0pt,minimum size=3pt]{};
\addplot[]coordinates{
(2,0)}
node[right]{$(1,0)$};

\addplot[]coordinates{
(2,-1.5)}
node[circle,fill=black,inner sep=0pt,minimum size=3pt]{};
\addplot[]coordinates{
(2,-1.5)}
node[right]{$(1,0)$};

\addplot[->]coordinates{(-2,1.5)(-1.5,0.75)}
node[left]{$(2,0)$};

\addplot[->]coordinates{
(-1/2,0) (0,0)}
node[below]{$(2,1)$};

   \end{axis} 
\end{tikzpicture}
\begin{tikzpicture}
   \begin{axis}[axis x line=none,axis y line=none,xmin=-4,xmax=3,
    ymin=-4,ymax=4,]
    
      \addplot[]coordinates{
(-3,3) (-2,1.5) (-1,0) (1,0) (2,-1.5)};

 \addplot[dotted]coordinates{
(-3,1.5) (-2,1.5) (-1,0) (1,0)};

 \addplot[dotted]coordinates{
(-3,-2) (-1,0) (1,0)};

 \addplot[dotted]coordinates{
(1,0) (2,2.5)};

 \addplot[dotted]coordinates{
(1,0) (2,0)};

 \addplot[dotted]coordinates{
(1,0) (2,-1.5)};

\addplot[]coordinates{
(-3,3)}
node[circle,fill=black,inner sep=0pt,minimum size=3pt]{};
\addplot[]coordinates{
(-3,3)}
node[left]{$-1$};

\addplot[]coordinates{
(-3,1.5)}
node[circle,fill=black,inner sep=0pt,minimum size=3pt]{};
\addplot[]coordinates{
(-3,1.5)}
node[left]{$-1$};

\addplot[]coordinates{
(-3,-2)}
node[circle,fill=black,inner sep=0pt,minimum size=3pt]{};

\addplot[]coordinates{
(2,2.5)}
node[circle,fill=black,inner sep=0pt,minimum size=3pt]{};

\addplot[]coordinates{
(2,0)}
node[circle,fill=black,inner sep=0pt,minimum size=3pt]{};
\addplot[]coordinates{
(2,0)}
node[right]{$1$};

\addplot[]coordinates{
(2,-1.5)}
node[circle,fill=black,inner sep=0pt,minimum size=3pt]{};
\addplot[]coordinates{
(2,-1.5)}
node[right]{$1$};

\addplot[dotted,->]coordinates{
(-2,1.5) (-1.5,0.75)};

\addplot[dotted,->]coordinates{
(1,0) (1.5,0)};

\addplot[]coordinates{
(1,0) (2,0)};

\addplot[]coordinates{
(-3,1.5) (-2,1.5)};

\addplot[->]coordinates{
(-3,1.5) (-2.5,1.5)};

\addplot[->]coordinates{
(1,0) (1.5,-0.75)};

\addplot[dotted,->]coordinates{
(-1/2,0) (0,0)}
node[above]{$T_1$};

\addplot[]coordinates{
(-1,2) (1,2) (0,3) (-1,2)
};    

\addplot[->]coordinates{
(1,2) (0,2)
}node[below]{$1$};

\addplot[->]coordinates{(-2,1.5)(-1.5,0.75)}
node[left]{$2\,$};

\addplot[->]coordinates{
(-1/2,0) (0,0)}
node[below]{$2$};

   \end{axis} 
\end{tikzpicture}
\begin{tikzpicture}
   \begin{axis}[axis x line=none,axis y line=none,xmin=-4,xmax=3,
    ymin=-4,ymax=4,]

\addplot[dotted]coordinates{
(-1,2) (1,2) (0,3) (-1,2)
};

      \addplot[dotted]coordinates{
(-3,3) (-2,1.5) };

 \addplot[dotted]coordinates{
(-3,1.5) (-2,1.5) (-1,0) (1,0)};

 \addplot[]coordinates{
(-3,-2) (-1,0) (1,0) (2,2.5)};

 \addplot[dotted]coordinates{
(1,0) (2,2.5)};

 \addplot[dotted]coordinates{
(1,0) (2,0)};

 \addplot[dotted]coordinates{
(1,0) (2,-1.5)};

\addplot[]coordinates{
(-3,3)}
node[circle,fill=black,inner sep=0pt,minimum size=3pt]{};

\addplot[]coordinates{
(-3,1.5)}
node[circle,fill=black,inner sep=0pt,minimum size=3pt]{};

\addplot[]coordinates{
(-3,-2)}
node[circle,fill=black,inner sep=0pt,minimum size=3pt]{};
\addplot[]coordinates{
(-3,-2)}
node[left]{$-1$};

\addplot[]coordinates{
(2,2.5)}
node[circle,fill=black,inner sep=0pt,minimum size=3pt]{};
\addplot[]coordinates{
(2,2.5)}
node[right]{$1$};

\addplot[]coordinates{
(2,0)}
node[circle,fill=black,inner sep=0pt,minimum size=3pt]{};

\addplot[]coordinates{
(2,-1.5)}
node[circle,fill=black,inner sep=0pt,minimum size=3pt]{};

\addplot[->]coordinates{
(-3,-2) (-2,-1)};

\addplot[->]coordinates{
(1,0) (1.75,1.875)};

\addplot[dotted,->]coordinates{
(-1/2,0) (0,0)}
node[above]{$T_2$};

\addplot[->]coordinates{
(-1/2,0) (0,0)}
node[below]{$1$};

   \end{axis} 
\end{tikzpicture}}
\caption{On the left a 1-dimensional integral $\Z^2$-current $T=(T_1,T_2)$ and on the right its two components.}
\label{a}
\end{figure}
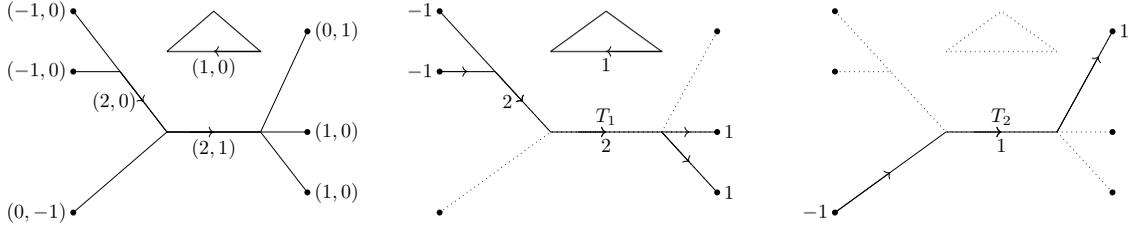

By \cite[
4.2.25]{Fe1} we can write, for $i=1,\dots,m$ 
\begin{equation}\label{eqn_struttura}
T_i=\sum_{k=1}^{N_i} \tilde T_i^k+\sum_{h=1}^\infty \mathring{T}_i^h,
\end{equation}
with\footnote{{Observe that all the currents in \eqref{eqn_spezzamassa} are ``classical'' currents, therefore the notion of mass is the standard one.}} 
\begin{equation}\label{eqn_spezzamassa}
\mathbb{M}(T_i)=\sum_{k=1}^{N_i} \mathbb{M}(\tilde T_i^k)+\sum_{h=1}^\infty \mathbb{M}(\mathring{T}_i^h)\quad \mbox{ and \quad $\mathbb{M}(\partial T_i)=\sum_{k=1}^{N_i} \mathbb{M}(\partial \tilde T_i^k)$},
\end{equation}
where:
\begin{itemize}
\item[$\circ$] $\tilde T_i^k:=\llbracket\Gamma_i^k,\tau_i^k,1\rrbracket$ are integral $1$-currents associated to simple, Lipschitz, open curves $\gamma_i^k:[0,1]\to\R^d$, where $\Gamma_i^k:=\mbox{Im}(\gamma_i^k)$ and $\tau_i^k:=\frac{(\gamma_i^k)'}{|(\gamma_i^k)'|}$;
\item[$\circ$] $\mathring{T}_i^h:=\llbracket Z_i^h,\nu_i^h,1\rrbracket$ are integral $1$-currents associated to simple, Lipschitz, closed curves (cycles) $\zeta_i^h:[0,1]\to\R^d$, where $Z_i^h:=\mbox{Im}(\zeta_i^h)$ and $\nu_i^h:=\frac{(\zeta_i^h)'}{|(\zeta_i^h)'|}$.
\end{itemize}

For every $i=1,\dots,m$, denote by $\Sigma_i:=\cup_{k=1}^{N_i}\Gamma_i^k$ and by $T'$ the $1$-current 
$$T_i':= \sum_{k=1}^{N_i} \tilde T_i^k.$$
Since each $T_i'$ is supported on $\Sigma$, we can write $T_i':=\llbracket\Sigma,\tau,\theta_i'\rrbracket$. Let $T'$ be the 1-dimensional integral $\Z^m$-current whose components are $(T'_1,\dots,T'_m)$. 

It follows from \eqref{eqn_struttura} that for every $i=1,\dots,m$ and for $\Haus^1$-a.e. $x\in\Sigma$ it holds 
\begin{equation}\label{eqn_multipl}
\theta_i(x)=\sum_{k=1}^{N_i} \chi_{\Gamma_i^k}(x)\langle\tau_i^k(x);\tau(x)\rangle+\sum_{h=1}^\infty \chi_{Z_i^h}(x)\langle\nu_i^h(x);\tau(x)\rangle,
\end{equation}
where we denoted by $\chi_E$ the characteristic function of the set $E$ taking values $0$ and $1$. 
 Combining \eqref{eqn_multipl} and \eqref{eqn_spezzamassa} we deduce that for every $i=1,\dots, m$ it holds 

$$\tau_i^k(x)=\sign(\theta_i(x))\tau(x), \quad \mbox{for $\Haus^1$-a.e. $x\in\Gamma_i^k$, for every $k$}$$
and
$$\nu_i^h(x)=\sign(\theta_i(x))\tau(x), \quad \mbox{for $\Haus^1$-a.e. $x\in Z_i^h$, for every $h$}.$$ 
Hence it holds $(\theta_1'(x),\dots,\theta_m'(x))\preceq(\theta_1(x),\dots,\theta_m(x))$, for $\Haus^1$-a.e. $x\in\Sigma$, which yields $\en(T')\leq\en(T)$, by property (ii) of Definition \ref{def:multi-material_cost}. Moreover by \eqref{eqn_struttura} it holds $\partial T'=\partial T$.

Next we associate to $T'$ a 1-dimensional integral $\Z^N$-current $\bar T$, simply defining $\bar T$ to be the current with components $(\bar T_1,\dots, \bar T_N)$, where we set, for $j=1,\dots, N$ (recalling the definition of $i(j)$ and $\bar j$ from \S\ref{sec:2}) $\bar T_j:=\tilde T_{i(j)}^k, \mbox{ for $k=j-\bar j$}.$
\begin{figure}[h]
\scalebox{0.75}{
\begin{tikzpicture}
   \begin{axis}[axis x line=none,axis y line=none,xmin=-4,xmax=3,
    ymin=-4,ymax=4,]
    
      \addplot[]coordinates{
(-3,3) (-2,1.5) (-1,0) (1,0) (2,-1.5)};

 \addplot[dotted]coordinates{
(-3,1.5) (-2,1.5) (-1,0) (1,0)};

 \addplot[dotted]coordinates{
(-3,-2) (-1,0) (1,0)};

 \addplot[dotted]coordinates{
(1,0) (2,2.5)};

 \addplot[dotted]coordinates{
(1,0) (2,0)};

 \addplot[dotted]coordinates{
(1,0) (2,-1.5)};

\addplot[]coordinates{
(-3,3)}
node[circle,fill=black,inner sep=0pt,minimum size=3pt]{};
\addplot[]coordinates{
(-3,3)}
node[left]{$-1$};

\addplot[]coordinates{
(-3,1.5)}
node[circle,fill=black,inner sep=0pt,minimum size=3pt]{};

\addplot[]coordinates{
(-3,-2)}
node[circle,fill=black,inner sep=0pt,minimum size=3pt]{};

\addplot[]coordinates{
(2,2.5)}
node[circle,fill=black,inner sep=0pt,minimum size=3pt]{};

\addplot[]coordinates{
(2,0)}
node[circle,fill=black,inner sep=0pt,minimum size=3pt]{};

\addplot[]coordinates{
(2,-1.5)}
node[circle,fill=black,inner sep=0pt,minimum size=3pt]{};
\addplot[]coordinates{
(2,-1.5)}
node[right]{$1$};

\addplot[dotted,->]coordinates{
(-2,1.5) (-1.5,0.75)};

\addplot[->]coordinates{
(1,0) (1.5,-0.75)};

\addplot[dotted,->]coordinates{
(-1/2,0) (0,0)}
node[above]{$\bar T_1$};

   \end{axis} 
\end{tikzpicture}

\begin{tikzpicture}
   \begin{axis}[axis x line=none,axis y line=none,xmin=-4,xmax=3,
    ymin=-4,ymax=4,]
    
      \addplot[dotted]coordinates{
(-3,3) (-2,1.5) };

 \addplot[]coordinates{
(-3,1.5) (-2,1.5) (-1,0) (1,0)};

 \addplot[dotted]coordinates{
(-3,-2) (-1,0)};

 \addplot[dotted]coordinates{
(-3,-2) (-1,0) (1,0) (2,2.5)};

 \addplot[dotted]coordinates{
(1,0) (2,2.5)};

 \addplot[dotted]coordinates{
(1,0) (2,0)};

 \addplot[dotted]coordinates{
(1,0) (2,-1.5)};

\addplot[]coordinates{
(-3,3)}
node[circle,fill=black,inner sep=0pt,minimum size=3pt]{};

\addplot[]coordinates{
(-3,1.5)}
node[circle,fill=black,inner sep=0pt,minimum size=3pt]{};
\addplot[]coordinates{
(-3,1.5)}
node[left]{$-1$};

\addplot[]coordinates{
(-3,-2)}
node[circle,fill=black,inner sep=0pt,minimum size=3pt]{};

\addplot[]coordinates{
(2,2.5)}
node[circle,fill=black,inner sep=0pt,minimum size=3pt]{};

\addplot[]coordinates{
(2,0)}
node[circle,fill=black,inner sep=0pt,minimum size=3pt]{};
\addplot[]coordinates{
(2,0)}
node[right]{$1$};

\addplot[]coordinates{
(2,-1.5)}
node[circle,fill=black,inner sep=0pt,minimum size=3pt]{};

\addplot[->]coordinates{
(-3,1.5) (-2.5,1.5)};

\addplot[->]coordinates{
(-2,1.5) (-1.5,0.75)};

\addplot[]coordinates{
(1,0) (2,0)};

\addplot[dotted,->]coordinates{
(-1/2,0) (0,0)}
node[above]{$\bar T_2$};

   \end{axis} 
\end{tikzpicture}
\begin{tikzpicture}
   \begin{axis}[axis x line=none,axis y line=none,xmin=-4,xmax=3,
    ymin=-4,ymax=4,]
    
      \addplot[dotted]coordinates{
(-3,3) (-2,1.5) };

 \addplot[dotted]coordinates{
(-3,1.5) (-2,1.5) (-1,0) (1,0)};

 \addplot[]coordinates{
(-3,-2) (-1,0) (1,0) (2,2.5)};

 \addplot[dotted]coordinates{
(1,0) (2,2.5)};

 \addplot[dotted]coordinates{
(1,0) (2,0)};

 \addplot[dotted]coordinates{
(1,0) (2,-1.5)};

\addplot[]coordinates{
(-3,3)}
node[circle,fill=black,inner sep=0pt,minimum size=3pt]{};

\addplot[]coordinates{
(-3,1.5)}
node[circle,fill=black,inner sep=0pt,minimum size=3pt]{};

\addplot[]coordinates{
(-3,-2)}
node[circle,fill=black,inner sep=0pt,minimum size=3pt]{};
\addplot[]coordinates{
(-3,-2)}
node[left]{$-1$};

\addplot[]coordinates{
(2,2.5)}
node[circle,fill=black,inner sep=0pt,minimum size=3pt]{};
\addplot[]coordinates{
(2,2.5)}
node[right]{$1$};

\addplot[]coordinates{
(2,0)}
node[circle,fill=black,inner sep=0pt,minimum size=3pt]{};

\addplot[]coordinates{
(2,-1.5)}
node[circle,fill=black,inner sep=0pt,minimum size=3pt]{};

\addplot[->]coordinates{
(-3,-2) (-2,-1)};

\addplot[->]coordinates{
(1,0) (1.75,1.875)};

\addplot[dotted,->]coordinates{
(-1/2,0) (0,0)}
node[above]{$\bar T_3$};

   \end{axis} 
\end{tikzpicture}}
\caption{The components $\bar T_1, \bar T_2,$ and $\bar T_3$ of the integral $\Z^3$-current $\bar T$.}
\label{b}
\end{figure}
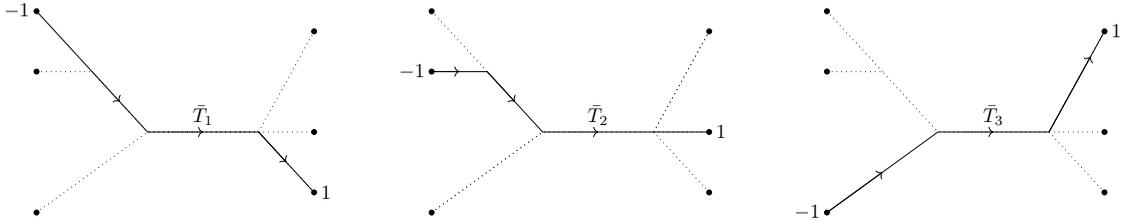

Applying the boundary operator to \eqref{eqn_struttura}, it follows that $\bar T$ is a competitor for the MMP. Moreover by \eqref{eqn_multipl} and \eqref{eqn_main} it follows that $\Mass(\bar T)=\en(T')\leq\en(T)$.\\

{\bf{Step 2: from MMP to MMTP.}} Let $\sigma\in\mathcal S_{N_1}\times\dots\times\mathcal S_{N_m}$. Let $\bar T:=\llbracket\bar\Sigma,\bar\tau,\bar\theta\rrbracket$ be a 1-dimensional integral $\Z^N$-current which is a competitor for the MMP and in particular $\partial \bar T=\mathcal{B}_\sigma$ (defined in \eqref{bordo_nuovo}). The aim of this step is to construct from $\bar T$ a competitor $T$ for the MMTP associated to $\mathcal{B}$, such that $\en(T)\leq \Mass (\bar T)$.

Let 
$$\bar T_1:=\llbracket\bar\Sigma,\bar\tau,\bar\theta_1\rrbracket,\dots, \bar T_N:=\llbracket\bar\Sigma,\bar\tau,\bar\theta_N\rrbracket$$ 
be the components of $\bar T$ {(see Figure \ref{b})}.  

As in the previous step, by \cite[4.2.25]{Fe1} and using the fact that $\Mass(\partial \bar T_j)=2$, we can write for $j=1,\dots,N$ 
\begin{equation}\label{eqn_struttura2}
\bar T_j= \tilde T_j+\sum_{h=1}^\infty \mathring{T}_j^h,
\end{equation}
with 
\begin{equation}\label{eqn_spezzamassa2}
\mathbb{M}(\bar T_j)=\mathbb{M}(\tilde T_j)+\sum_{h=1}^\infty \mathbb{M}(\mathring{T}_j^h),
\end{equation}
where:
\begin{itemize}
\item[$\circ$] $\tilde T_j:=\llbracket\Gamma_j,\tau_j,1\rrbracket$ are integral $1$-currents associated to simple, Lipschitz, open curves $\gamma_j:[0,1]\to\R^d$, where $\Gamma_j:=\mbox{Im}(\gamma_j)$ and $\tau_j:=\frac{(\gamma_j)'}{|(\gamma_j)'|}$;
\item[$\circ$] $\mathring{T}_j^h:=\llbracket Z_j^h,\nu_j^h,1\rrbracket$ are integral $1$-currents associated to simple Lipschitz closed curves $\zeta_j^h:[0,1]\to\R^d$, with $\zeta(0)=\zeta(1)$, where $Z_j^h:=\mbox{Im}(\zeta_j^h)$ and $\nu_j^h:=\frac{(\zeta_j^h)'}{|(\zeta_j^h)'|}$.
\end{itemize}
  Let $T'$ be the 1-dimensional integral $\Z^N$-current whose components are $(\tilde T_1,\dots,\tilde T_N)$. By \eqref{eqn_struttura2} and \eqref{eqn_spezzamassa2}, for $j=1,\dots, N$ it holds $\langle\bar\tau;\tilde\tau_j\rangle=\sign(\bar\theta_j)$ $\Haus^{1}$-a.e. in $\tilde\Gamma_j$ and hence, since $\|\cdot\|$ is a monotone norm, we have $\Mass(T')\leq \Mass(\bar T)$. Moreover, by \eqref{eqn_struttura2} it holds $\partial T'=\partial \bar T$.

Let $T$ be the 1-dimensional integral $\Z^m$-current with components (recalling the definition of $i(j)$ from \S\ref{sec:2})
$$T_i:=\sum_{j:i(j)=i}\tilde T_j.$$

Since $\partial T'= \mathcal{B}_\sigma$, then $\partial T=\mathcal{B}$. Moreover by \eqref{eqn_main} it holds $\en(T)=\Mass(T')\leq \Mass(\bar T)$.
\end{proof}

We conclude this section by proving the existence of a monotone norm $\|\cdot\|$ satisfying \eqref{eqn_main}.

In the proof of the next theorem, we will use the following fact, which can be found in \cite{Bauer_Stoer_Witz}. Recall the notions of monotone and absolute norm given at the beginning of Section \ref{sec:2}, as well as the partial order introduced there. 

\begin{lem}\label{ABS}
An absolute norm on $\mathbb{R}^n$ is monotone.
\end{lem}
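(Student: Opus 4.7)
The plan is to reduce the monotonicity property to a one–dimensional statement about a convex, even function, which can then be handled by elementary analysis.

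First, let $x\preceq y$ in $\mathbb{R}^n$. Since $\|\cdot\|$ is absolute, $\|x\|=\|\bar x\|$ and $\|y\|=\|\bar y\|$, so the problem reduces to showing that whenever $0\le a_j\le b_j$ for every $j=1,\dots,n$, one has $\|a\|\le\|b\|$. I would then interpolate between $a$ and $b$ one coordinate at a time: it suffices to prove that for every index $j$ and every choice of non-negative reals $c_1,\dots,c_{j-1},c_{j+1},\dots,c_n$, the map
\[
g(t):=\bigl\|(c_1,\dots,c_{j-1},t,c_{j+1},\dots,c_n)\bigr\|
\]
is non-decreasing on $[0,+\infty)$. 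Iterating this statement $n$ times along the path $(b_1,\dots,b_n)\to(a_1,b_2,\dots,b_n)\to\cdots\to(a_1,\dots,a_n)$ yields $\|a\|\le\|b\|$.

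To prove the one–dimensional statement, I would use two properties of $g$. On one hand, $g$ is convex on $\mathbb{R}$, because it is the composition of a norm with an affine map. On the other hand, the absolute property gives $g(-t)=g(t)$ for every $t\in\mathbb{R}$, since flipping the sign of a single coordinate does not change $\bar{(\cdot)}$. A convex even function attains its minimum at $0$, and by convexity it is non-decreasing on $[0,+\infty)$: indeed, for $0\le s\le t$ one can write $s=\tfrac{t-s}{2t}(-t)+\tfrac{t+s}{2t}(t)$, so
\[
g(s)\le\tfrac{t-s}{2t}g(-t)+\tfrac{t+s}{2t}g(t)=g(t).
\]
This gives the desired monotonicity in each coordinate separately, and hence the lemma.

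The only subtle point is checking that the coordinate-by-coordinate interpolation is legitimate, i.e.\ that after replacing $b_j$ with $a_j$ the remaining coordinates are still non-negative; but this is automatic since we started with $a,b\ge 0$. So the entire argument rests on the observation that an absolute norm, restricted to one coordinate, produces a convex even function of one real variable, and such functions are non-decreasing away from zero. No genuine obstacle is expected.
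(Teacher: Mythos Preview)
Your argument is correct. The paper does not actually prove this lemma; it merely states it as a known fact and refers the reader to \cite{Bauer_Stoer_Witz}. Your self-contained proof---reducing via absoluteness to non-negative vectors, then interpolating one coordinate at a time and using that a convex even function of one real variable is non-decreasing on $[0,\infty)$---is the standard elementary argument and fills in what the paper leaves to the literature. The convex-combination identity $s=\tfrac{t-s}{2t}(-t)+\tfrac{t+s}{2t}t$ and the evenness $g(-t)=g(t)$ (which indeed follows from absoluteness since the frozen coordinates $c_i$ are non-negative) are exactly what is needed; no gap.
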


We will use the term \emph{orthant} in $\mathbb{R}^n$ for the following subset of $\mathbb{R}^n$. Consider a vector $\xi \in \mathbb{R}^d$ whose coordinates are only $\pm 1$. The $\xi$-orthant is:
\[
\{x \in \mathbb{R}^d: \xi_\ell x_\ell \geq 0, \forall \ell=1,\dots,n\}.
\]
Note that an orthant is always closed.

\begin{thm}[Existence of a norm satisfying \eqref{eqn_main}]\label{thm:norm}
Let $\cost:\Z^m\to[0,+\infty)$ be a function satisfying ${\rm (i)}$, ${\rm (ii)}$, ${\rm (iii')}$ of Definition \ref{def:multi-material_cost}. Let $\mathcal{B}$ be as in \eqref{bordo} and let $N$ and $N_i$ $(i=1,\dots, m)$ be the natural numbers defined in \eqref{def_enns}. Then there exists a monotone norm $\|\cdot\|$ on $\R^N$ satisfying \eqref{eqn_main}.
\end{thm}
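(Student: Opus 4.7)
The plan is to realize $\|\cdot\|$ as the supremum of a symmetric family of linear functionals on $\R^N$, one for each admissible pair $(\theta,\sigma)$ with $\theta\in\Z^m\setminus\{0\}$, $|\theta_i|\leq N_i$, and $\sigma\in\mathcal{S}_{N_1}\times\cdots\times\mathcal{S}_{N_m}$. The functionals are obtained by lifting subgradients of $\|\cdot\|_\star$ through the block decomposition $\R^N=\bigoplus_{i=1}^m\R^{N_i}$, weighted by $\cost(\theta)/\|\theta\|_\star$; property $\mathrm{(iii')}$ will convert this weight into the required sublinearity estimate. Denote by $B_i\subset\{1,\dots,N\}$ the $i$-th block of $N_i$ indices. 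For each $\theta$, select a subgradient $\ell^\theta\in(\R^m)^*$ of $\|\cdot\|_\star$ at $\theta$ (so $\ell^\theta\cdot\theta=\|\theta\|_\star$ and $\ell^\theta\cdot x\leq\|x\|_\star$ for all $x$) whose coordinates satisfy $\ell^\theta_i\sign(\theta_i)\geq 0$ for each $i$ and $\ell^\theta_i=0$ when $\theta_i=0$. Such an $\ell^\theta$ exists because the monotonicity of $\|\cdot\|_\star$ makes the one-sided derivative of $\|\cdot\|_\star$ at $\theta$ in the direction $-\sign(\theta_i)e_i$ non-positive. For each $\sigma$, let $S_i(\sigma)\subset B_i$ denote the support of $v_\theta^\sigma$ in $B_i$ (of size $|\theta_i|$), and set
\[
\varphi_{\theta,\sigma}(v):=\frac{\cost(\theta)}{\|\theta\|_\star}\sum_{i=1}^m \ell^\theta_i\sum_{j\in S_i(\sigma)}v_j,\qquad \|v\|:=\sup_{\theta,\sigma}\varphi_{\theta,\sigma}(v).
\]
Symmetry under the involution $\theta\mapsto-\theta$ (with $\ell^{-\theta}:=-\ell^\theta$ and the evenness of $\cost$) yields $\|{-v}\|=\|v\|$; positive definiteness follows from testing against the ``indicator'' functionals $\varphi_{\pm e_{i(j)},\sigma}$ for any $j$ with $v_j\neq 0$.

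The crux is the estimate
\[
\varphi_{\theta,\sigma}(v_{\theta'}^{\sigma'})\leq\cost(\theta')\qquad\forall\,\theta',\sigma',
\]
which simultaneously gives $\|v_\theta^\sigma\|\leq\cost(\theta)$ and ensures that the supremum defining $\|\cdot\|$ is finite (so $\|\cdot\|$ is a genuine norm). Introduce $\tilde\theta\in\Z^m$ coordinatewise by $\tilde\theta_i:=\sign(\theta_i)\min(|\theta_i|,|\theta'_i|)$ when $\sign(\theta_i)=\sign(\theta'_i)$, and $\tilde\theta_i:=0$ otherwise; a direct check gives $\tilde\theta\preceq\theta$ and $\tilde\theta\preceq\theta'$. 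Using the overlap bound $|S_i(\sigma)\cap S_i(\sigma')|\leq\min(|\theta_i|,|\theta'_i|)$ combined with the sign alignment of $\ell^\theta$, one obtains
\[
\varphi_{\theta,\sigma}(v_{\theta'}^{\sigma'})\leq \frac{\cost(\theta)}{\|\theta\|_\star}\,\ell^\theta\cdot\tilde\theta\leq \frac{\cost(\theta)}{\|\theta\|_\star}\,\|\tilde\theta\|_\star\leq \cost(\tilde\theta)\leq \cost(\theta'),
\]
where the second inequality is the subgradient property of $\ell^\theta$, the third is precisely $\mathrm{(iii')}$ applied to $\tilde\theta\preceq\theta$, and the fourth is monotonicity of $\cost$ (property $\mathrm{(ii)}$) applied to $\tilde\theta\preceq\theta'$. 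The matching lower bound is immediate: $\varphi_{\theta,\sigma}(v_\theta^\sigma)=(\cost(\theta)/\|\theta\|_\star)\,\ell^\theta\cdot\theta=\cost(\theta)$, which completes \eqref{eqn_main}.

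The remaining ingredient, which I expect to be the most delicate point, is the verification that $\|\cdot\|$ is monotone in the paper's sense: $v\preceq w\Rightarrow\|v\|\leq\|w\|$. The strategy is to exploit the rigid sign-alignment of the functionals $\varphi_{\theta,\sigma}$. For any $\varphi_{\theta,\sigma}$ nearly realizing $\|v\|$, one would replace $\sigma$ by a permutation $\sigma'$ whose supports $S_i(\sigma')$ are concentrated on positions of $B_i$ where $\sign(w_j)$ matches $\sign(\theta_i)$ — this is possible because $v\preceq w$ forces $v_j$ and $w_j$ to share the same sign whenever $v_j\neq 0$, so the positions already selected for $v$ are admissible choices for $w$. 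A term-by-term comparison then yields $\varphi_{\theta,\sigma'}(w)\geq\varphi_{\theta,\sigma}(v)$, and passing to suprema gives $\|w\|\geq\|v\|$, completing the proof of Theorem~\ref{thm:norm}.
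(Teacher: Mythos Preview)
Your dual construction—realizing $\|\cdot\|$ as a supremum of linear functionals built from subgradients of $\|\cdot\|_\star$—is genuinely different from the paper's primal approach, which takes the convex hull of the rescaled lattice vectors $q_D$ and then, in a separate step, patches the resulting balls orthant by orthant to handle costs that fail the symmetry $\cost(x)=\cost(\bar x)$. Your verification of \eqref{eqn_main} is clean and correct: the chain $\varphi_{\theta,\sigma}(v_{\theta'}^{\sigma'})\le\frac{\cost(\theta)}{\|\theta\|_\star}\,\ell^\theta\!\cdot\!\tilde\theta\le\frac{\cost(\theta)}{\|\theta\|_\star}\|\tilde\theta\|_\star\le\cost(\tilde\theta)\le\cost(\theta')$ uses exactly (iii') and (ii) as they should be used. (Incidentally, the requirement $\ell^\theta_i=0$ when $\theta_i=0$ is harmless but unnecessary, since then $S_i(\sigma)=\emptyset$ anyway.)

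The gap is in the monotonicity step. Your plan is to keep $\theta$ fixed and adjust only $\sigma$ so that $\varphi_{\theta,\sigma'}(w)\ge\varphi_{\theta,\sigma}(v)$. This fails: take a single block $B_i$ with $N_i=3$, $\theta_i=2$ (so $\ell^\theta_i>0$), $v|_{B_i}=(1,0,0)$ and $w|_{B_i}=(1,-M,-M)$ with $M$ large. Then $v\preceq w$, yet every subset $S_i(\sigma')\subset B_i$ of size $2$ gives $\sum_{j\in S_i(\sigma')}w_j\le 1-M<1=\sum_{j\in S_i(\sigma)}v_j$. So no choice of $\sigma'$ works with $\theta$ fixed. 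The positions ``already selected for $v$'' are \emph{not} admissible: the positions where $v_j=0$ may carry $w_j$ of the wrong sign.

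A repair must allow $\theta$ to change. In the single-material case $m=1$ this is easy: with $a_k:=\cost(k)/k$ non-increasing (which is (iii')), replace the support $S$ by $S^+:=\{j\in S:\sign(\theta)w_j\ge 0\}$ and $|\theta|$ by $|S^+|$; then
\[
a_{|S|}\sum_{j\in S}v_j\le a_{|S|}\sum_{j\in S^+}w_j\le a_{|S^+|}\sum_{j\in S^+}w_j\le\|w\|.
\]
For $m\ge 2$ the analogous move replaces each $\theta_i$ by $\sign(\theta_i)\,|S_i^+|$, but now the prefactor $\cost(\theta)/\|\theta\|_\star$ and the subgradient $\ell^\theta$ both change, and you have not shown the required componentwise comparison $\frac{\cost(\theta)}{\|\theta\|_\star}\ell^\theta_i\le\frac{\cost(\theta')}{\|\theta'\|_\star}\ell^{\theta'}_i$ (which need not hold). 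This is precisely the difficulty the paper's two-step orthant construction is designed to absorb; in your framework it remains open, so the monotonicity claim is currently unproved.
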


\begin{proof}
\noindent{\bf Step 1}: First of all, let us suppose that $\cost$ has the additional property that
\begin{equation}\label{supersymm}
\cost(x) = \cost(\bar x)
\end{equation}
for every $x\in \Z^m$, where we used the notation introduced at the beginning of Section \ref{nota}. 

\vspace{0.2cm}
\fbox{\it General strategy}
\vspace{0.2cm}

Let us denote by $\mathcal{A}$ the set of elements of $\Z^N$ whose coordinates are only $0$'s and $1$'s, and denote $\bar{\mathcal{A}} := \{x \in \mathbb{Z}^N: \bar x\in \mathcal{A}\}$.

Let $A = (a_1,\dots,a_N)$, $B = (b_1,\dots,b_N) \in\mathcal{A}$. We say that the pair $(A,B)\in{\mathcal{A}}\times{\mathcal{A}}$ is \emph{good} if $A-B\neq 0$ and the following implications hold, for every $i=1,\dots,m$ (again, we set $N_0:=0$):
\begin{itemize}
\item[$\circ$] if $a_j=1$ for some $j$ between $N_1+\dots+N_{i-1}+1$ and $N_1+\dots+N_{i}$ then $b_h=0$ for all indices $h$ in the same range;
\item[$\circ$] if $b_j=1$ for some $j$ between $N_1+\dots+N_{i-1}+1$ and $N_1+\dots+N_{i}$ then $a_h=0$ for all indices $h$ in the same range.
\end{itemize}
Recalling the heuristic interpretation described in Section \ref{sec:2}, any vector of $\R^N$ whose coordinates are only $-1,1$, or $0$, represents a collection of labelled materials, possibly of different types. If $(A,B)$ is a good pair, {then on each of the $m$ groups into which we split the set of labels $\{1,\dots,N\}$ at most one among $A$ and $B$ can have some non-zero coordinates}. Therefore in the corresponding vector $A-B$, all the coordinates in each group { belong either to $\{0,1\}$ or to $\{0,-1\}$}. This represents the fact that all the materials of the same type are assumed to travel with the same orientation.

If $(A,B)$ is a good pair we define
\begin{equation}\label{eqn_defcab}
c_{A,B}:=\cost\left(\sum_{j=1}^{N_1}(a_j-b_j),\dots,\sum_{j=N-N_{m-1}+1}^N(a_j-b_j)\right).
\end{equation} 
{This represents the cost of transporting the collection of goods labeled by $A-B$ along a stretch of unit length.}
Observing that, if $A - B\neq 0$ we have $c_{A,B}\neq 0$, we can define
\[
 q_{A,B}:=\frac{A-B}{c_{A,B}},
\]
for any good pair $(A,B)$. Observe that if $(A,B)$ and $(A',B')$ are good pairs with $\overline{A-B}=\overline{A'-B'}$, then by \eqref{supersymm} it holds $c_{A,B}=c_{A',B'}$. Hence given $D\neq 0$ any vector in $\bar{\mathcal{A}}$, it is convenient to define $c_D:=c_{\bar D,0}$, which is well defined since $(\bar D,0)$ is a good pair. As above, we define $q_D:=\frac{D}{c_D}$. 
Consider the convex hull 
$$C:={\co}(\{q_D: D\in \bar{\mathcal{A}}\setminus\{0\}\})\subset \R^N.$$ 
The theorem is proven if we show three properties of $C$:
\begin{itemize}
\item[(1)] $C$ is a convex body (i.e. the closure of its non-empty interior) which is bounded and symmetric with respect to the origin;
\item[(2)] $C$ is a monotone set, i.e. for every $x,y\in\R^N$, with $y\preceq x$, if $x\in C$, then also $y\in C$.
\item[(3)] it holds
\[
q_{A,B}\in\partial C,\quad\forall A,B\in\mathcal{A},\quad \mbox{such that $(A,B)$ is a good pair}.
\]
\end{itemize} 
Indeed, if (1) holds, there exists a norm $\|\cdot\|$ on $\R^N$ whose unit ball is the set $C$. Then, (2) implies that $\|\cdot\|$ is monotone. Moreover (3) implies that $\|\cdot\|$ satisfies \eqref{eqn_main}. Indeed, take an $m$-tuple $(\theta_1,\dots,\theta_m)$ and denote, for every $i=1,\dots, m$, $\theta_i^+:=\max\{\sign{\theta_i,0}\}$, $\theta_i^-:=\max\{-\sign{\theta_i,0}\}$. Now define for every $\sigma \in \mathcal{S}_{N_1}\times\dots\times\mathcal{S}_{N_m}$,
$$A_\sigma:=\theta_1^+\sum_{j=1}^{|\theta_1|}e_{\sigma(j)}+\theta_2^+\sum_{j=N_1+1}^{N_1+|\theta_2|}e_{\sigma(j)}+\dots+\theta_m^+\sum_{j=N-N_{m}+1}^{N-N_{m}+|\theta_m|}e_{\sigma(j)}$$ 
and
$$B_\sigma:=\theta_1^-\sum_{j=1}^{|\theta_1|}e_{\sigma(j)}+\theta_2^-\sum_{j=N_1+1}^{N_1+|\theta_2|}e_{\sigma(j)}+\dots+\theta_m^-\sum_{j=N-N_{m}+1}^{N-N_{m}+|\theta_m|}e_{\sigma(j)}.$$ 
One can verify that $A_\sigma,B_\sigma\in\mathcal{A}$ and $(A_\sigma,B_\sigma)$ is a good pair. Hence we have 
$$1\stackrel{(3)}{=}\|q_{A_\sigma,B_\sigma}\|=\frac{\|A_\sigma-B_\sigma\|}{c_{A_\sigma,B_\sigma}}\stackrel{\eqref{eqn_defcab}}{=}\frac{\|A_\sigma-B_\sigma\|}{\cost(\theta_1,\dots,\theta_m)}.$$
We conclude noting that $A_\sigma-B_\sigma$ coincides with the RHS of \eqref{eqn_main}.

\vspace{0.2cm}
\fbox{\it Proof of (1) and (2)}
\vspace{0.2cm}

To prove (1), notice that for every $j=1,\dots,N$, $q_{\pm e_j}$ are contained in $C$, hence $0\in {\rm int}(C)$. The fact that $C$ is symmetric with respect to the origin follows from the fact that the multi-material cost $\cost$ is even. Finally, the boundedness is trivial, since $C$ the convex hull of a finite set. 

We will now prove (2), i.e. that $C$ is a monotone set. To prove it, we show that the norm with unit ball $C$ is absolute. This implies the monotonicity by Lemma \ref{ABS}. Let $x \in \mathbb{R}^N$, with $\|x\|=1$. The fact that $\|x\| = 1$ implies that $x \in \partial C\subset C$, therefore we can write
\[
x = \sum_{k=1}^K t_kq_{D^k},
\]
where $D^k\in\bar{\mathcal{A}}\setminus \{0\}$, $\sum_{k=1}^{K} t_k = 1$, with $t_k$ positive.
There exists a diagonal matrix $M\in{\rm Mat}^{N\times N}$ with entries $1,-1,0$ such that $Mx = \bar x$. Therefore:
\[
\|\bar x\|=\|Mx\| = \left\|\sum_{k=1}^K t_kMq_{D^k}\right\| =\left\|\sum_{k=1}^K t_k q_{MD^k}\right\|\leq \sum_{k=1}^K t_k\|q_{MD^k}\| \leq  \sum_{k=1}^K t_k\leq 1,
\]
where the third equality follows from the fact that $c_{D^k}=c_{MD^k}$ (the latter being a consequence of the fact that $\overline{MD}=\overline{D}$ for every $D\in\bar{\mathcal{A}}\setminus \{0\}$) and the second inequality follows from the fact that $\|q_{D}\| \leq 1, \forall D\in\bar{\mathcal{A}}$, by the definition of $C$. This proves that $$\|\bar{x}\|\le\|x\|, \quad\forall x\in\R^N.$$ The proof of the reverse inequality is analogous.

\vspace{0.2cm}
\fbox{\it Proof of (3)}
\vspace{0.2cm}

The proof of (3) is more involved. We can prove equivalently that for every $A,B\in\mathcal{A}$, such that $(A,B)$ is a good pair, and for every $t>0$ the following implication holds

\begin{equation}
tq_{A,B}\in C\Longrightarrow t\le 1.
\end{equation}
Since $tq_{A,B} \in C$, we can write
\begin{equation}\label{eqn_tqAB}
tq_{A,B}=\sum_{k=1}^K\lambda_kq_{D^k},
\end{equation}
where $D^k\in\bar{\mathcal{A}}$, $\sum_{k=1}^{K}\lambda_k = 1$, with $\lambda_k$ positive. Formula \eqref{eqn_tqAB} can be rewritten componentwise, denoting $D^k = (d^k_1,\dots,d^k_N)$,
\[
t\frac{a_j - b_j}{c_{A,B}}=\sum_{k}\lambda_k\frac{d^k_j}{c_{D^k}}, \text{ for every } j=1,\dots,N.
\]
For $k=1,\dots,K$, we define vectors $F^k:=(f^k_1,\dots,f^k_N)\in \bar{\mathcal{A}}$ by
\begin{equation}\label{zero}
\begin{cases}
f^k_j := 0, \text{ if $a_j - b_j$ = 0}\\
f^k_j := d^k_j, \text{ otherwise. }
\end{cases}
\end{equation}
Note that
\begin{equation}\label{eqn_tqAB_comp}
t\frac{a_j - b_j}{c_{A,B}}=\sum_{k}\lambda_k\frac{f^k_j}{c_{D^k}}, \text{ for every } j=1,\dots,N.
\end{equation}

Indeed, the equality $$\sum_{k}\lambda_k\frac{d^k_j}{c_{D^k}} = \sum_{k}\lambda_k\frac{f^k_j}{c_{D^k}}$$ holds for those $j$ such that $\sum_{k}\lambda_k\frac{d^k_j}{c_{D^k}}\neq 0$ because in that case $f_j^k = d_j^k$ for every $k$. On the other hand, for those indices $j$ for which $\sum_{k}\lambda_k\frac{d^k_j}{c_{D^k}} = 0$ by the definition of $F^k$, also $f_j^k = 0$ for every $k$, so that
\[
\sum_{k}\lambda_k\frac{f^k_j}{c_{D^k}}=0 = \sum_{k}\lambda_k\frac{d^k_j}{c_{D^k}}.
\]

Moreover
\begin{equation}\label{monoto}
c_{F^k} \leq c_{D^k}
\end{equation} 
by property (ii) in Definition \ref{def:multi-material_cost} (because $\bar F^k\preceq \bar D^k$ by definition of $F^k$).
Denote, for $i=1,\dots,m$,
\[
x_i := \sum_{j = N_1 + \dots + N_{i - 1} + 1}^{N_1 + \dots + N_{i}}a_j - b_j,
\]
and for $k=1,\dots, K$,
\[
x^k_i := \sum_{j = N_1 + \dots + N_{i - 1} + 1}^{N_1 + \dots + N_{i}}e^k_j.
\]
Define, for every $k = 1,\dots,K$ and for every $i=1,\dots,m$,
\[
y^k_i:=
\begin{cases}
x^k_i, \text{ if } x^k_ix_i \geq 0\\
-x^k_i, \text{ if } x^k_ix_i \leq 0.
\end{cases}
\]

Finally, denote $x:= (x_1,\dots,x_m)$, and $y^k:= (y_1^k,\dots,y_m^k)$, for $k=1,\dots,K$. By \eqref{zero}, we have that $y^k \preceq x$, for every $k$. To see this, note that, by the definition of $y^k$, it immediately follows that $y_i^kx_i \ge 0$. To prove that $|y_i^k|\le|x_i|$ for every $i$ and $k$, we recall the fact that $(A,B)$ is a good pair, so that, in particular, we have the following property:
\begin{equation}\label{signs}
\sum_{j = N_1 + \dots + N_{i - 1} + 1}^{N_1 + \dots + N_{i}}|a_j - b_j| = \left|\sum_{j = N_1 + \dots + N_{i - 1} + 1}^{N_1 + \dots + N_{i}}a_j - b_j\right|, \forall i.
\end{equation}
Also, since $|a_j - b_j| \in \{0,1\} $ for every $j$, by the definition of $f_j^k$, it readily follows that
\begin{equation}\label{pointwise}
|f_j^k| \le |a_j - b_j|, \forall j,k.
\end{equation}
We also have
\[
|y_i^k| = |x_i^k| \le \sum_{j = N_1 + \dots + N_{i - 1} + 1}^{N_1 + \dots + N_{i}}|f^k_j|\stackrel{\eqref{pointwise}}{\le} \sum_{j = N_1 + \dots + N_{i - 1} + 1}^{N_1 + \dots + N_{i}}|a_j - b_j| \stackrel{\eqref{signs}}{=} \left|\sum_{j = N_1 + \dots + N_{i - 1} + 1}^{N_1 + \dots + N_{i}}a_j - b_j\right| = |x_i|.
\]

Moreover, by \eqref{eqn_tqAB_comp}, for every $i=1,\dots,m$ it holds
\[
\frac{tx_i}{c_{A,B}} = \sum_k\lambda_k\frac{x_i^k}{c_{D^k}}, 
\]
hence the fact that $\sign(y_i^k) = \sign(x_i)$ implies
\begin{align*}
\frac{t|x_i|}{c_{A,B}}&=\frac{t\sign(x_i)x_i}{c_{A,B}} = \sum_k\lambda_k\frac{\sign(x_i)x_i^k}{c_{D^k}} = \sum_k\lambda_k\frac{\sign(x_i)\sign(x_i^k)|x_i^k|}{c_{D^k}} =\\
&=\sum_k\lambda_k\frac{\sign(x_ix_i^k)|y_i^k|}{c_{D^k}} = \sum_k\lambda_k\frac{\sign(x_i^k)y_i^k}{c_{D^k}}.
\end{align*}
From the equality $\sign(y_i^k) = \sign(x_i)$, holding for every $k,i$, we also deduce that, if we fix $i$, the quantity $\sign(y_i^k)$ remains constant when varying $k$.
Now, if $y_i^k$ is positive, for every $k$, since $\sign(x_i^k) \leq 1$, we get
\[
\frac{t|x_i|}{c_{A,B}} \le \sum_k\lambda_k\frac{y_i^k}{c_{D^k}} = \left|\sum_k\lambda_k\frac{y_i^k}{c_{D^k}}\right|.
\]
Otherwise, using the fact that $\sign(x_i^k) \geq -1$,
\[
\frac{t|x_i|}{c_{A,B}} \le \sum_k\lambda_k\frac{-y_i^k}{c_{D^k}} = \left|\sum_k\lambda_k\frac{y_i^k}{c_{D^k}}\right|.
\]
We have just proved that
\begin{equation}\label{stimazza}
t\frac{x}{c_{A,B}} \preceq \sum_{k:F^k\neq 0}\lambda_k\frac{y^k}{c_{D^k}}.
\end{equation}
Finally, note also that $\bar y^k = \bar x^k$. By \eqref{supersymm} it holds $c_{A,B} = \cost(x)$ and $c_{F^k} = c_{\bar F^k}= \cost(x^k) = \cost(y^k)$, this implies, by property ${\rm (iii')}$ of Definition \ref{def:multi-material_cost} that
\begin{equation}\label{daidaidai}
\frac{c_{A,B}\|y^k\|_\star}{c_{F^k}\|x\|_\star} = \frac{\cost(x)\|y^k\|_\star}{\cost(y^k)\|x\|_\star} \leq 1,\quad \forall k=1,\dots,K \mbox{ such that $F^k\neq 0$},
\end{equation}
where $\|\cdot\|_\star$ is the norm appearing in such definition. Using that $\|\cdot\|_\star$ is monotone, \eqref{monoto} and \eqref{stimazza}, we get
\[
t\|x\|_\star \le \left\|\sum_{k:F^k\neq 0}\lambda_k\frac{c_{A,B}y^k}{c_{D^k}} \right\|_\star \le \sum_{k:F^k\neq 0}\left\|\lambda_k\frac{c_{A,B}y^k}{c_{D^k}}\right\|_\star = \sum_{k:F^k\neq 0}\lambda_k\frac{c_{A,B}\|y^k\|_\star}{c_{D^k}} \le \sum_{k:F^k\neq 0}\lambda_k\frac{c_{A,B}\|y^k\|_\star}{c_{F^k}}.
\]
Finally, dividing by $\|x\|_\star$, \eqref{daidaidai} yields
\[
t \leq \sum_{k:F^k\neq 0}\lambda_k \leq 1.
\]

\noindent{\bf Step 2:}
Now consider a general cost $\cost$, which does not necessarily satisfy \eqref{supersymm}. We will construct a closed, convex, and symmetric set $C$, whose associated norm is monotone and satisfies \eqref{eqn_main}.

\vspace{0.2cm}
\fbox{\it General strategy}
\vspace{0.2cm}

For any orthant $\mathcal{O}\subset\R^m$, we define a cost $\cost_\mathcal{O}:\Z^m\to [0,+\infty)$, imposing the following properties:
\begin{itemize}
\item [(a)] $\cost_\mathcal{O}(x)=\cost(x)$ if $x\in \mathcal{O}$;
\item [(b)] $\cost_\mathcal{O}(x)=\cost_\mathcal{O}(\bar x)$ for every $x$.
\end{itemize}
Trivially properties (i),(ii),${\rm (iii')}$ of Definition \ref{def:multi-material_cost} are satisfied by $C_{\mathcal{O}}$. 

Let $\|\cdot\|_{\mathcal{O}}$ be the norm on $\R^N$ obtained applying Step 1 to the cost $\cost_\mathcal{O}$ and let $B_{\mathcal{O}}$ be the unit ball with respect to such norm {(see Figure \ref{c})}. Let us take any point $x\in{\rm{int}}(\mathcal {O})$ and define 
$$\sigma_\mathcal{O}:=(\sign(x_1),\dots,\sign(x_m))\in\R^m.$$
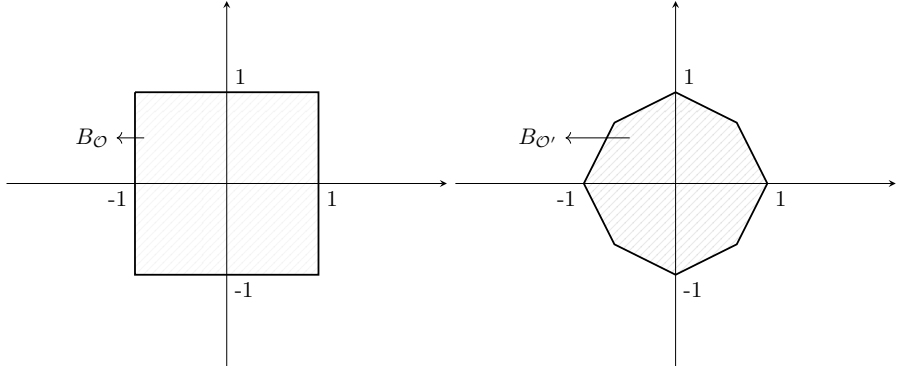
\begin{figure}[h]
\scalebox{0.85}{
\begin{tikzpicture}
   \begin{axis}[axis x line=middle,axis y line=middle,xmin=-2.4,xmax=2.4,ticks=none,
    ymin=-2,ymax=2,]
     
\addplot[thick] coordinates{
(-1,1) (1,1) (1,-1) (-1,-1) (-1,1)};
\addplot[pattern = north east lines, opacity=0.2] coordinates{
(-1,1) (1,1) (1,-1) (-1,-1) (-1,1)};   
   
   \addplot[->] coordinates{
(-0.9,0.5) (-1.2,0.5)}
node[left] {$B_{\mathcal{O}}$};

\addplot[] coordinates{
(1,0)}
node[below right] {1};
\addplot[] coordinates{
(-1,0)}
node[below left] {-1};
\addplot[] coordinates{
(0,1)}
node[above right] {1};
\addplot[] coordinates{
(0,-1)}
node[below right] {-1};
\end{axis}
   
\end{tikzpicture}

\begin{tikzpicture}
   \begin{axis}[axis x line=middle,axis y line=middle,xmin=-2.4,xmax=2.4,ticks=none,
    ymin=-2,ymax=2,]

\addplot[thick] coordinates{
(-2/3,2/3) (0,1) (2/3,2/3) (1,0) (2/3,-2/3)(0,-1)(-2/3,-2/3)(-1,0)(-2/3,2/3)};
\addplot[pattern = north east lines, opacity=0.3] coordinates{
(-2/3,2/3) (0,1) (2/3,2/3) (1,0) (2/3,-2/3)(0,-1)(-2/3,-2/3)(-1,0)(-2/3,2/3)};   
   
   \addplot[->] coordinates{
(-0.5,0.5) (-1.2,0.5)}
node[left] {$B_{\mathcal{O}'}$};

   \addplot[] coordinates{
(1,0)}
node[below right] {1};
\addplot[] coordinates{
(-1,0)}
node[below left] {-1};
\addplot[] coordinates{
(0,1)}
node[above right] {1};
\addplot[] coordinates{
(0,-1)}
node[below right] {-1};
   
   \end{axis}
\end{tikzpicture}}
\caption{The unit balls relative to the norm associated to two costs $\cost_\mathcal{O}$ and $\cost_{\mathcal{O}'}$. Here $\mathcal{O}$ is the positive orthant and $\mathcal{O}'$ is its symmetric with respect to the $y$-axis.}
\label{c}
\end{figure}

Let us also denote
$$\tau_\mathcal{O}:= \sign(x_1)(e_1+\dots+e_{N_1}) + \dots + \sign(x_m)(e_{N-N_m+1}+\dots+e_N)\in\R^N,$$
and let $H_\mathcal{O}$ be the unique orthant in $\R^N$ containing the point $\tau_{\mathcal{O}}$. Finally, consider $A_\mathcal{O}:=H_\mathcal{O}\cap B_{\mathcal{O}}$ and {(see Figure \ref{d} and Figure \ref{e})}
$$C_\mathcal{O}:=\{p\in\R^N:\exists q\in A_\mathcal{O} \mbox{ with } (\tau_\mathcal{O})_j(p_j-q_j)\leq 0 \mbox{ for every $j=1,\dots,N$}\}.$$

\begin{figure}[h]
\scalebox{0.85}{
\begin{tikzpicture}
   \begin{axis}[axis x line=middle,axis y line=middle,xmin=-2.4,xmax=2.4,ticks=none,
    ymin=-2,ymax=2,]
\addplot[thick] coordinates{
(-1,1) (1,1) (1,-1) (-1,-1) (-1,1)};
\addplot[pattern = north east lines, opacity=0.2] coordinates{
(-1,1) (1,1) (1,-1) (-1,-1) (-1,1)}; 

\addplot[thick,name path=X,pattern=north west lines] coordinates{
(0,0) (1,0) (1,1) (0,1) (0,0)};
\addplot[thick] coordinates{(1.4,1.5)}
node[] {$H_{\mathcal{O}}$};
\addplot[->] coordinates{(0.5,0.5) (1.3,0.5)}
node[right] {$A_{\mathcal{O}}$};
\addplot[->] coordinates{(0.5,-0.5) (1.3,-0.5)}
node[right] {$B_{\mathcal{O}}$};
\addplot[thick,dashdotted, fill = gray,opacity =0.1] coordinates{(-3,1) (1,1) (1,-3) (-3,-3)};
\addplot[thick,dashdotted] coordinates{(-3,1) (1,1) (1,-3) (-3,-3)};
\addplot[thick, color=gray, opacity=0.8] coordinates{(-1.4,-1.5)}
node[] {$C_{\mathcal{O}}$};

\addplot[] coordinates{
(1,0)}
node[below right] {1};
\addplot[] coordinates{
(-1,0)}
node[below left] {-1};
\addplot[] coordinates{
(0,1)}
node[above right] {1};
\addplot[] coordinates{
(0,-1)}
node[below right] {-1};

   \end{axis} 
\end{tikzpicture}

\begin{tikzpicture}
   \begin{axis}[axis x line=middle,axis y line=middle,xmin=-2.4,xmax=2.4,ticks=none,
    ymin=-2,ymax=2,]
      \addplot[thick] coordinates{
(-2/3,2/3) (0,1) (2/3,2/3) (1,0) (2/3,-2/3)(0,-1)(-2/3,-2/3)(-1,0)(-2/3,2/3)};
\addplot[pattern = north east lines, opacity=0.3] coordinates{
(-2/3,2/3) (0,1) (2/3,2/3) (1,0) (2/3,-2/3)(0,-1)(-2/3,-2/3)(-1,0)(-2/3,2/3)};   
\addplot[thick,name path=X,pattern=north west lines] coordinates{
(0,0) (0,1) (-2/3,2/3) (-1,0) (0,0)};
\addplot[thick] coordinates{(-1.4,1.5)}
node[] {$H_{\mathcal{O}'}$};
\addplot[->] coordinates{(-0.5,0.5) (-1.3,0.5)}
node[left] {$A_{\mathcal{O}'}$};
\addplot[->] coordinates{(-0.5,-0.5) (-1.3,-0.5)}
node[left] {$B_{\mathcal{O}'}$};
\addplot[thick,dashdotted, fill = gray,opacity =0.1] coordinates{(3,1) (0,1)(-2/3,2/3) (-1,0) (-1,-3) (3,-3)};
\addplot[thick,dashdotted] coordinates{(3,1) (0,1)(-2/3,2/3) (-1,0) (-1,-3) (3,-3)};
\addplot[thick, color=gray, opacity=0.8] coordinates{(1.4,-1.5)}
node[] {$C_{\mathcal{O}'}$};

\addplot[] coordinates{
(1,0)}
node[below right] {1};
\addplot[] coordinates{
(-1,0)}
node[below left] {-1};
\addplot[] coordinates{
(0,1)}
node[above right] {1};
\addplot[] coordinates{
(0,-1)}
node[below right] {-1};
   \end{axis} 
\end{tikzpicture}}
\caption{The construction of the sets $C_\mathcal{O}$ and $C_{\mathcal{O}'}$.}
\label{d}
\end{figure}

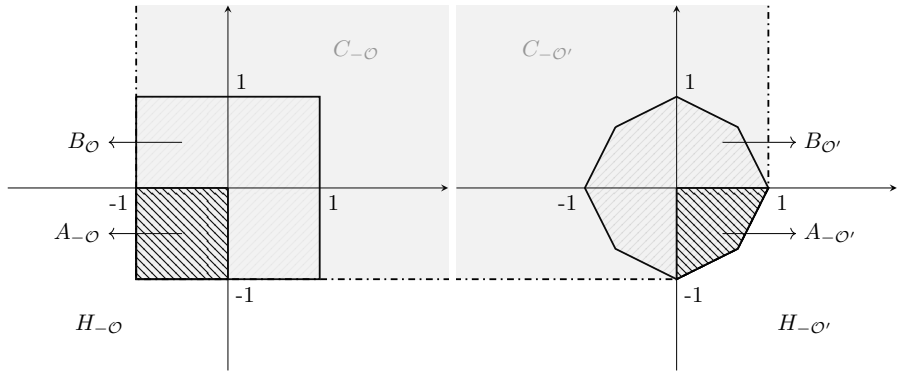
\begin{figure}
\scalebox{0.85}{
\begin{tikzpicture}
   \begin{axis}[axis x line=middle,axis y line=middle,xmin=-2.4,xmax=2.4,ticks=none,
    ymin=-2,ymax=2,]
\addplot[thick] coordinates{
(-1,1) (1,1) (1,-1) (-1,-1) (-1,1)};
\addplot[pattern = north east lines, opacity=0.2] coordinates{
(-1,1) (1,1) (1,-1) (-1,-1) (-1,1)}; 

\addplot[thick,name path=X,pattern=north west lines] coordinates{
(0,0) (-1,0) (-1,-1) (0,-1) (0,0)};
\addplot[thick] coordinates{(-1.4,-1.5)}
node[] {$H_{-\mathcal{O}}$};
\addplot[->] coordinates{(-0.5,-0.5) (-1.3,-0.5)}
node[left] {$A_{-\mathcal{O}}$};
\addplot[->] coordinates{(-0.5,0.5) (-1.3,0.5)}
node[left] {$B_{\mathcal{O}}$};
\addplot[thick,dashdotted, fill = gray,opacity =0.1] coordinates{(3,-1) (-1,-1) (-1,3) (3,3)};
\addplot[thick,dashdotted] coordinates{(3,-1) (-1,-1) (-1,3) (3,3)};
\addplot[thick, color=gray, opacity=0.8] coordinates{(1.4,1.5)}
node[] {$C_{-\mathcal{O}}$};

\addplot[] coordinates{
(1,0)}
node[below right] {1};
\addplot[] coordinates{
(-1,0)}
node[below left] {-1};
\addplot[] coordinates{
(0,1)}
node[above right] {1};
\addplot[] coordinates{
(0,-1)}
node[below right] {-1};
   \end{axis} 
\end{tikzpicture}

\begin{tikzpicture}
   \begin{axis}[axis x line=middle,axis y line=middle,xmin=-2.4,xmax=2.4,ticks=none,
    ymin=-2,ymax=2,]
      \addplot[thick] coordinates{
(-2/3,2/3) (0,1) (2/3,2/3) (1,0) (2/3,-2/3)(0,-1)(-2/3,-2/3)(-1,0)(-2/3,2/3)};
\addplot[pattern = north east lines, opacity=0.3] coordinates{
(-2/3,2/3) (0,1) (2/3,2/3) (1,0) (2/3,-2/3)(0,-1)(-2/3,-2/3)(-1,0)(-2/3,2/3)};   
\addplot[thick,name path=X,pattern=north west lines] coordinates{
(0,0) (0,-1) (2/3,-2/3) (1,0) (0,0)};
\addplot[thick] coordinates{(1.4,-1.5)}
node[] {$H_{-\mathcal{O}'}$};
\addplot[->] coordinates{(0.5,-0.5) (1.3,-0.5)}
node[right] {$A_{-\mathcal{O}'}$};

\addplot[->] coordinates{(0.5,0.5) (1.3,0.5)}
node[right] {$B_{\mathcal{O}'}$};

\addplot[thick,dashdotted, fill = gray,opacity =0.1] coordinates{(-3,-1) (0,-1)(2/3,-2/3) (1,0) (1,3) (-3,3)};
\addplot[thick,dashdotted] coordinates{(-3,-1) (0,-1)(2/3,-2/3) (1,0) (1,3) (-3,3)};
\addplot[thick, color=gray, opacity=0.8] coordinates{(-1.4,1.5)}
node[] {$C_{-\mathcal{O}'}$};

\addplot[] coordinates{
(1,0)}
node[below right] {1};
\addplot[] coordinates{
(-1,0)}
node[below left] {-1};
\addplot[] coordinates{
(0,1)}
node[above right] {1};
\addplot[] coordinates{
(0,-1)}
node[below right] {-1};
   \end{axis} 

\end{tikzpicture}
}
\caption{{The construction of the sets $C_\mathcal{-O}$ and $C_{\mathcal{-O}'}$.}}
\label{e}
\end{figure}

Observe that 
\begin{equation}\label{eqn_observ}
C_\mathcal{O}\cap H_\mathcal{O}=A_\mathcal{O},
\end{equation}
by the monotonicity of $A_\mathcal{O}$, which is implied by the monotonicity of $B_\mathcal{O}$ (the intersection of monotone sets is monotone). Lastly we denote $$C:=\bigcap_{\mathcal{O}\subset\R^m}C_\mathcal{O},$$
where the intersection is taken among the $2^m$ orthants in $\R^m$ {(see Figure \ref{f})}.

\begin{figure}[h]
\scalebox{0.85}{
\begin{tikzpicture}
   \begin{axis}[axis x line=middle,axis y line=middle,xmin=-2.4,xmax=2.4,ticks=none,
    ymin=-2,ymax=2,]
      \addplot[thick,name path=X,fill = gray,opacity =0.2] coordinates{
(-1,0) (-2/3,2/3)(0,1) (1,1) (1,0) (2/3,-2/3) (0,-1) (-1,-1) (-1,0)};
\addplot[] coordinates{(0.5,0.5)}
   node[] {$C$};
 
   \addplot[name path=X] coordinates{
(-1,0) (-2/3,2/3)(0,1) (1,1) (1,0) (2/3,-2/3) (0,-1) (-1,-1) (-1,0)};

\addplot[] coordinates{
(1,0)}
node[below right] {1};
\addplot[] coordinates{
(-1,0)}
node[below left] {-1};
\addplot[] coordinates{
(0,1)}
node[above right] {1};
\addplot[] coordinates{
(0,-1)}
node[below right] {-1};
   \end{axis} 
\end{tikzpicture}}
\caption{The construction of the set $C$.}
\label{f}
\end{figure}
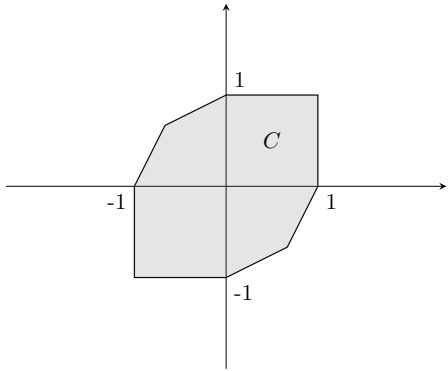

We claim that $C$ is a closed, convex, and monotone set, with non-empty interior, which is symmetric with respect to the origin, bounded, and satisfies
\begin{equation}\label{eqn_norma_sbilenca}
C\cap H_\mathcal{O}=A_\mathcal{O},\quad\mbox{ for every orthant $\mathcal{O}\subset\R^m$}.
\end{equation}
The monotonicity of $C$ would imply that the norm $\|\cdot\|$ on $\R^N$, whose unit ball is $C$, is monotone. Moreover, Step 1 implies that \eqref{eqn_main} holds for the norm $\|\cdot\|_\mathcal{O}$ and the cost $\mathcal{C}_\mathcal{O}$. We also observe that, in the orthant $\mathcal{O}$ of $\R^m$, the costs $\mathcal{C}$ and  $\mathcal{C}_\mathcal{O}$ coincide. Also, \eqref{eqn_norma_sbilenca} implies that, in the orthant $H_\mathcal{O}$ of $\R^N$, $\|\cdot\|$ and $\|\cdot\|_\mathcal{O}$ coincide. In this way, we get that \eqref{eqn_main} holds also for $\|\cdot\|$ and for the cost $\cost$. 

The fact that $C$ is closed, convex and monotone follows from the fact that each set $C_\mathcal{O}$ is so, moreover each $C_\mathcal{O}$ contains a neighborhood of the origin, hence $C$ has non-empty interior. The fact that $C$ is bounded and symmetric with respect to the origin follows from the fact that $C_\mathcal{O}\cap C_\mathcal{-O}$ is so for every $\mathcal{O}$, where we denoted by $-\mathcal{O}$ the orthant which is symmetric to $\mathcal{O}$ with respect to the origin. To conclude, we have to prove \eqref{eqn_norma_sbilenca}. To prove \eqref{eqn_norma_sbilenca}, we make the following claim:
\begin{equation}\label{C1}
\tag{\text{Claim} 1}
A_{\mathcal{O}'}\cap H_{\mathcal{O}}=A_{\mathcal{O}}\cap H_{\mathcal{O}'},\quad\mbox{ for every pair of orthants $\mathcal{O}, \mathcal{O}'\subset\R^m$}.
\end{equation}

\vspace{0.2cm}
\fbox{\it Proof of \eqref{eqn_norma_sbilenca} using \eqref{C1}}
\vspace{0.2cm}

Let us show firstly how \eqref{C1} implies \eqref{eqn_norma_sbilenca}. By the definition of $C$, it is sufficient to show that
\begin{equation}\label{eqn_norma_sbilenca_2}
C_{\mathcal{O}'}\cap A_{\mathcal{O}}=A_{\mathcal{O}},\quad\mbox{ for every pair of orthants $\mathcal{O}, \mathcal{O}'\subset\R^m$},
\end{equation}
indeed
$$C\cap H_\mathcal{O}= \bigcap_{\mathcal{O'}\subset\R^m}C_\mathcal{O'}\cap H_\mathcal{O}=\bigcap_{\mathcal{O'}\subset\R^m}C_\mathcal{O'}\cap C_\mathcal{O}\cap H_\mathcal{O}\stackrel{\eqref{eqn_observ}}{=}\bigcap_{\mathcal{O'}\subset\R^m}C_\mathcal{O'}\cap A_\mathcal{O}\stackrel{\eqref{eqn_norma_sbilenca_2}}{=}A_\mathcal{O}.$$
To prove \eqref{eqn_norma_sbilenca_2} using \eqref{C1}, we write
\[
C_{\mathcal{O}'}\cap A_{\mathcal{O}} = \bigcup_{H}(C_{\mathcal{O}'}\cap H\cap A_{\mathcal{O}}),
\]
where $H$ varies among the $2^N$ orthants of $\R^N$. Then \eqref{eqn_norma_sbilenca_2} would follow from
\begin{equation}\label{eqn_rott}
C_{\mathcal{O}'}\supseteq H\cap A_{\mathcal{O}},\quad  \forall \mathcal{O},\mathcal{O}',H.
\end{equation}
To prove \eqref{eqn_rott}, consider $z \in H\cap A_{\mathcal{O}}$. We define a new vector, $y\in \mathbb{R}^N$, in this way:
\[
y_j =
\begin{cases}
z_j, \text{ if $(\tau_{\mathcal{O}'})_jz_j \geq 0$, }\\
0, \text{ otherwise}.
\end{cases}
\]
It is immediate to see that $(\tau_{\mathcal{O}'})_j(z_j - y_j) \leq 0, \forall j$, and that $y\in H_{\mathcal{O}'}$. Hence, to prove that $z\in C_{\mathcal{O}'}$ it is sufficient to prove that $y \in A_{\mathcal{O}'}$. We observe that $y \in A_\mathcal{O}$, because $y\preceq z$ and $z\in A_\mathcal{O}$. By \eqref{C1}, this yields: $$y \in A_\mathcal{O}\cap H_{\mathcal{O}'}= A_{\mathcal{O}'}\cap H_{\mathcal{O}}.$$ Therefore $y \in A_{\mathcal{O}'}$ as desired.\\ 

\vspace{0.2cm}
\fbox{\it Proof of \eqref{C1}: strategy}
\vspace{0.2cm}

To prove \eqref{C1}, we will prove the more precise formula:
\begin{equation}\label{eqn_interfaces}
A_{\mathcal{O}'}\cap H_{\mathcal{O}} = {\co}(\{0\}\cup\{q_D:D\in (\bar{\mathcal{A}}\cap H_\mathcal{O}\cap H_{\mathcal{O}'}\setminus\{0\})\}),\quad\forall \mathcal{O},\mathcal{O}' \subset \mathbb{R}^m.
\end{equation}
Equation \eqref{eqn_interfaces} implies \eqref{C1} because its right hand side does not change if we swap $\mathcal{O}$ and $\mathcal{O}'$.
Denote $E$ the RHS of \eqref{eqn_interfaces}, i.e. 
$$E:={\co}(\{0\}\cup\{q_D:D\in (\bar{\mathcal{A}}\cap H_\mathcal{O}\cap H_{\mathcal{O}'}\setminus\{0\})\}),$$ and observe that $E\subseteq A_{\mathcal{O}'}\cap H_{\mathcal{O}}$, since $q_D \in A_{\mathcal{O}'}\cap H_{\mathcal{O}}$ for every $D \in \bar{\mathcal{A}}\cap H_\mathcal{O}\cap H_{\mathcal{O}'}\setminus\{0\}$, by Step 1.

In order to prove \eqref{eqn_interfaces}, we need to prove the reverse inclusion, hence, since $A_{\mathcal{O}'}\cap H_{\mathcal{O}}$ is a compact convex set, we can assume by contradiction that (by Krein-Milman Theorem) there exists an extreme point $z$ of $A_{\mathcal{O}'}\cap H_{\mathcal{O}}$ that does not belong to $E$.

Since $z\in A_{\mathcal{O}'}\subset B_{\mathcal{O}'}$, we can write it as a convex combination
$$z=\sum_{k=1}^K\lambda_k q_{D^k},$$
where we remind that $q_{D^k}=\frac{D^k}{c_{D^k}}$, $D^k\in\bar{\mathcal{A}}$ and $c_{D^k}:=c_{\bar D^k,0}$ are those defined in \eqref{eqn_defcab}, with $\cost_{\mathcal{O}'}$ in place of $\cost$.
Our aim is to replace the elements $q_{D^k}$ appearing in the convex combination above with suitable elements $q_{F^k}$, where the points $F^k$ belong to $\bar{\mathcal{A}}\cap H_\mathcal{O}\cap H_{\mathcal{O}'}$. Firstly we will prove only that one can write $z$ as a convex combination of $0$ and some points $q_{G^k}$, where the points $G_k$ can be chosen in $\bar{\mathcal{A}}\cap {\rm{span}}\{H_\mathcal{O}\cap H_{\mathcal{O}'}\}$. Then we will reduce to the points $q_{F^k}$ with $F^k$ in $\bar{\mathcal{A}}\cap H_\mathcal{O}\cap H_{\mathcal{O}'}$ which would give a contradiction to the fact that $z\not\in E$.

\vspace{0.2cm}
\fbox{\it Proof of \eqref{C1}: first reduction}
\vspace{0.2cm}

For $k=1,\dots,K$, we define vectors $G^k:=(g^k_1,\dots,g^k_N)\in \bar{\mathcal{A}}$ by
\begin{equation}\label{zerozero}
\begin{cases}
g^k_j := 0, \text{ if $z_j$ = 0}\\
g^k_j := d^k_j, \text{ otherwise. }
\end{cases}
\end{equation}
Note that, as a consequence of \eqref{zerozero} and the fact that $z\in H_{\mathcal{O}'}\cap H_\mathcal{O}$, for every $k$ we have that $G^k\in \bar{\mathcal{A}}\cap{\rm span}\{H_\mathcal{O}\cap H_{\mathcal{O}'}\}$. Moreover
\begin{equation}\label{eqn_zet}
z=\sum_{k=1}^K\lambda_k\frac{G^k}{c_{D^k}}.
\end{equation}
We also have that
\begin{equation}\label{monzet}
c_{G^k} \leq c_{D^k}
\end{equation}
by the monotonicity of the cost $\cost_{\mathcal{O}'}$. Hence we can write, denoting $\lambda'_k:=\lambda_k\frac{c_{G^k}}{c_{D^k}}\leq\lambda_k$,
$$z=\sum_{k=1}^K\lambda'_kq_{G^k}.$$
Hence we have written $z$ as a convex combination of $0$ and some $q_{G^k}$, for points 
$$G^k\in\bar{\mathcal{A}}\cap {\rm{span}}\{H_\mathcal{O}\cap H_{\mathcal{O}'}\}.$$

\vspace{0.2cm}
\fbox{\it Proof of \eqref{C1}: second reduction}
\vspace{0.2cm}

Let now $\eta:=(\eta_1,\dots,\eta_N)$ be a point in the relative interior of $H_\mathcal{O}\cap H_{\mathcal{O}'}$.
Note that one can choose $\eta = \tau_{\mathcal{O}} + \tau_{\mathcal{O}'}$. Indeed, the relative interior of $H_{\mathcal{O}}\cap H_{\mathcal{O}'}$ is the set of points $p = (p_1.\dots,p_N)\in \R^N$ such that
\[
\begin{cases}
p_j = 0, \text{ if } (\tau_{\mathcal{O}})_j(\tau_{\mathcal{O}'})_j = - 1,\\
 \sign(p_j) = (\tau_{\mathcal{O}})_j, \text{ if } (\tau_{\mathcal{O}})_j(\tau_{\mathcal{O}'})_j = 1.
\end{cases}
\]
For $k=1,\dots,K$, we define vectors $F^k:=(f^k_1,\dots,f^k_N)\in \bar{\mathcal{A}}$ by
\begin{equation}\label{zero2}
\begin{cases}
f^k_j := g^k_j, \text{ if $\eta_jg^k_j \geq 0$}\\
f^k_j := -g^k_j, \text{ otherwise. }
\end{cases}
\end{equation}
Since, for every $k$, $G^k\in \bar{\mathcal{A}}\cap{\rm span}\{H_\mathcal{O}\cap H_{\mathcal{O}'}\}$, then $F^k\in \bar{\mathcal{A}}\cap H_\mathcal{O}\cap H_{\mathcal{O}'}$. Indeed $H_{\mathcal{O}}\cap H_{\mathcal{O}'}$ is the set of points $p = (p_1,\dots,p_N)\in {\rm span}\{H_\mathcal{O}\cap H_{\mathcal{O}'}\}\subset \R^N$ such that $p_j\eta_j \ge 0$ for every $j$. Since $z\in H_\mathcal{O}\cap H_{\mathcal{O}'}$ and, since $\cost_{\mathcal{O}'}$ satisfies \eqref{supersymm}, it holds $c_{G^k}=c_{F^k}$ for every $k$, then
\begin{equation}\label{eqn_zet2}
z=\sum_{k=1}^K\lambda'_kq_{G^k}\preceq \sum_{k=1}^K\lambda'_k\frac{F^k}{c_{G^k}}=\sum_{k=1}^K\lambda'_kq_{F^k}=:z',
\end{equation}
where we remind that $\preceq$ is the order relation defined in \eqref{order}. 

\vspace{0.2cm}
\fbox{\it Proof of \eqref{C1}: last contradiction, i.e. $z=z'$}
\vspace{0.2cm}

We observe that by \eqref{eqn_zet2} and since $F^k\in \bar{\mathcal{A}}\cap H_\mathcal{O}\cap H_{\mathcal{O}'}$ it follows that $z'\in E$.
We will prove now that $z'=z$, which would be a contradiction, since $z\not\in E$ by assumption. Assume by contradiction that $z'\neq z$ and observe that 
\begin{equation}\label{annul}
z_j = 0 \text{ for some } j \Rightarrow z_j' = 0.
\end{equation}
Indeed, $\eqref{zerozero}$ yields that $g_j^k = 0$ for every $k$, if $z_j = 0$, and then by $\eqref{zero2}$ also $f_j^k = 0$. Therefore, by \eqref{eqn_zet2}, $$z'_j = \sum_{k=1}^K\lambda'_k\frac{f_j^k}{c_{F^k}} = 0.$$
Define now, for $\varepsilon>0$, $w_\varepsilon:=z-\varepsilon(z'-z)$. Note that, by \eqref{annul}, we have $w_\varepsilon\preceq z\preceq z'$ for $\varepsilon$ sufficiently small. This implies that, for $\varepsilon$ sufficiently small, $w_\varepsilon\in A_{\mathcal{O}'}\cap H_{\mathcal{O}}$, because $A_{\mathcal{O}'}\cap H_{\mathcal{O}}$ is an intersection of monotone sets, therefore monotone. Hence we can write $z$ as a non-trivial convex combination of the points $w_\varepsilon$ and $z'$ in $A_{\mathcal{O}'}\cap H_{\mathcal{O}}$, which violates the extremality of $z$.  
\end{proof}

As we observed before, Theorem \ref{main} provides a proof of the existence of a solution to the MMTP, which does not require a proof of the lower semicontinuity of the energy $\mathbb{E}$.
\begin{cor} Under assumptions {\rm (i),(ii),${\rm (iii')}$} on the cost functional $\cost$, the problems {\rm MMTP} and {\rm MMP} admit a solution. 
\end{cor}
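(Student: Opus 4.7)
The whole point of the corollary, as signalled in the paragraph preceding it, is to obtain existence without invoking the lower semicontinuity of $\en$ used in Theorem~\ref{t:exist}. My plan is therefore to leverage Theorem~\ref{main} in the reverse direction: first produce a solution to the MMP by a soft compactness argument, then transport it back to a solution of the MMTP through the canonical correspondence built in Section~\ref{sec:3}.

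For the MMP, I would fix a permutation $\sigma\in\mathcal{S}_{N_1}\times\cdots\times\mathcal{S}_{N_m}$ and look at the associated mass-minimization problem among $1$-dimensional integral $\Z^N$-currents with boundary $\mathcal{B}_\sigma$, the mass being computed with respect to the monotone norm $\|\cdot\|$ provided by Theorem~\ref{thm:norm}. Since $\mathcal{B}_\sigma$ is itself the boundary of an obvious competitor (take, for each $j=1,\dots,N$, the oriented segment from $P(j)$ to $D(\sigma(j))$ with unit coefficient $e_j$), the hypothesis of Corollary~\ref{cor:dirmeth} is fulfilled, and we get a mass-minimizer $\bar T_\sigma$ for each $\sigma$. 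The product $\mathcal{S}_{N_1}\times\cdots\times\mathcal{S}_{N_m}$ is finite, so it is legitimate to pick $\sigma^{\ast}$ attaining $\min_\sigma \Mass(\bar T_\sigma)$. The pair $(\sigma^{\ast},\bar T_{\sigma^{\ast}})$ is by construction a solution to the MMP.

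For the MMTP, Step~2 of the proof of Theorem~\ref{main} supplies, for any competitor $\bar T$ of the MMP with boundary $\mathcal{B}_\sigma$, an explicit competitor $T$ of the MMTP with $\en(T)\le\Mass(\bar T)$. Applied to $\bar T_{\sigma^{\ast}}$, this yields a candidate $T^{\ast}$ for the MMTP. Combining this bound with the reverse inequality given by Step~1 of the same proof, applied to any MMTP competitor, one obtains
\[
\en(T^{\ast})\;\le\;\Mass(\bar T_{\sigma^{\ast}})\;=\;\min_{\sigma,\bar T}\Mass(\bar T)\;\le\;\inf_{T}\en(T)\;\le\;\en(T^{\ast}),
\]
so that all inequalities are equalities and $T^{\ast}$ is a solution to the MMTP.

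The main work has already been done: the content of Theorem~\ref{thm:norm} and Theorem~\ref{main} is what turns an otherwise subtle compactness/lower semicontinuity problem into a routine application of Corollary~\ref{cor:dirmeth} for a finite collection of boundaries. The only conceptual point to check is that (iii$'$) is indeed stronger than (iii), so that the statement of Theorem~\ref{main} applies; this is a straightforward consequence of the sublinearity of $\cost$ with respect to the monotone norm $\|\cdot\|_\star$.
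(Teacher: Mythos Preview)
Your proposal is correct and follows essentially the same route as the paper's own two-line proof, which just says that the MMP admits a solution by Theorem~\ref{thm:cptness} (equivalently Corollary~\ref{cor:dirmeth}, together with the finiteness of the permutation group) and then the MMTP admits one by Theorem~\ref{main}; you have simply unpacked the second step into the explicit chain of inequalities coming from Steps~1 and~2. One minor remark: your last paragraph is superfluous, since Theorem~\ref{main} is stated directly under hypotheses (i), (ii), (iii$'$), so there is nothing to check about (iii).
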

\begin{proof}
The fact that the MMP admits a solution follows from Theorem \ref{thm:cptness}. The fact that the MMTP admits a solution then follows from Theorem \ref{main}.
\end{proof}

The property ${\rm (iii')}$ of Definition \ref{def:multi-material_cost} appears to be the most restrictive. However, at least in the ``single-material'' case is also necessary to obtain the equivalence with the mass-minimization problem.

\begin{thm}
If $m = 1$, and $\mathcal{C}$ is a cost that fulfills {\rm (i),(ii)} of Definition \ref{def:multi-material_cost}, then $\rm{(iii')}$ holds if and only if there exists a monotone norm $\|\cdot\|$ that satisfies \eqref{eqn_main}.
\end{thm}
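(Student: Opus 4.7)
\medskip

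\textbf{Proposed plan.} The direction $(\mathrm{iii}')\Rightarrow\text{norm exists}$ is already a special case of Theorem \ref{thm:norm}, so it suffices to argue the converse: starting from a monotone norm $\|\cdot\|$ on $\R^N$ satisfying \eqref{eqn_main} for $m=1$ (for $N$ at least as large as the integers we want to compare, which can always be arranged by choosing an appropriate boundary $\mathcal{B}$), deduce property (iii$'$). Since every norm on $\R$ is a positive multiple of $|\cdot|$, in the case $m=1$ property (iii$'$) reduces to the statement that $n\mapsto \cost(n)/n$ is non-increasing on $\N_{>0}$; by the evenness (property (i)) this automatically extends to $\Z\setminus\{0\}$ ordered by $\preceq$.

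The crucial observation is that \eqref{eqn_main}, applied for varying permutations $\sigma\in\mathcal{S}_N$, forces
\[
\Big\|\sum_{i\in S}e_i\Big\|=\cost(|S|)\qquad\text{for every }\emptyset\neq S\subset\{1,\dots,N\},
\]
so that the norm $\|\cdot\|$ is invariant, on $\{0,1\}$-vectors of a given Hamming weight, under the natural action of $\mathcal{S}_N$ permuting coordinates. This symmetry is what will let us compare $\cost(n)$ and $\cost(n')$ for $n'\le n\le N$.

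The main identity is combinatorial: each index $j\in\{1,\dots,n\}$ belongs to exactly $\binom{n-1}{n'-1}$ subsets of $\{1,\dots,n\}$ of cardinality $n'$, and therefore
\[
\sum_{j=1}^{n}e_j\;=\;\frac{1}{\binom{n-1}{n'-1}}\sum_{\substack{S\subset\{1,\dots,n\}\\ |S|=n'}}\mathbf{1}_S.
\]
Applying $\|\cdot\|$, using the triangle inequality and the symmetry observation above, and simplifying $\binom{n}{n'}/\binom{n-1}{n'-1}=n/n'$ yields
\[
\cost(n)\;=\;\Big\|\textstyle\sum_{j=1}^n e_j\Big\|\;\le\;\frac{1}{\binom{n-1}{n'-1}}\binom{n}{n'}\cost(n')\;=\;\frac{n}{n'}\cost(n'),
\]
which is exactly the inequality $\cost(n)/n\le \cost(n')/n'$ required by (iii$'$) (with $\|\cdot\|_\star=|\cdot|$).

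The only real subtlety is the range of $N$: to prove (iii$'$) for all pairs $n'\le n$ one needs to know that a monotone norm satisfying \eqref{eqn_main} exists for arbitrarily large $N$, which is the natural reading of the hypothesis since \eqref{eqn_main} is asked to hold as $\mathcal{B}$ varies among admissible boundaries. Once this is fixed, the argument above is straightforward and does not meet any serious obstacle; the "hard" ingredient is really just the permutation symmetry of $\|\cdot\|$ on constant-weight vectors, which is built into \eqref{eqn_main} through the quantification over $\sigma$.
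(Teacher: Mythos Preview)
Your proof is correct and follows essentially the same approach as the paper: both exploit that \eqref{eqn_main} forces $\|\sum_{i\in S}e_i\|=\cost(|S|)$ for every nonempty $S$, and then use an averaging identity together with the triangle inequality to compare $\cost(n)/n$ with $\cost(n')/n'$. The only difference is that the paper applies your identity in the special case $n'=n-1$ (where it reads $(\#K-1)E=\sum_{\ell\in K}E_\ell$) and then inducts, whereas you do the general comparison in one step; this is a harmless shortcut, not a different argument.
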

\begin{proof}
One implication has already been proven in Theorem \ref{thm:norm}. Suppose now that there exists a monotone norm $\|\cdot\|$ on $\mathbb{R}^N$ that satisfies \eqref{eqn_main}. Fix any $E \in {\mathcal{A}}$, where $\mathcal{A}$ is defined at the beginning of the proof of Theorem \ref{thm:norm}. We can write $E$ as
\[
E = \sum_{k \in K} {\bf e}_{i_k},
\]
being $K$ a subset of $\{1,\dots,N\}$. We denote with $\#K$ the cardinality of $K$. By \eqref{eqn_main}, we have:
\begin{equation}\label{eq:thm}
\|E\| = \mathcal{C}(\#K) = \|F\|,
\end{equation}
for any $F\in{\mathcal{A}}$ such that $F = \sum_{k \in K'}{\bf e}_{i_k}$ and $\#K' = \#K$. For every $\ell\in K$ define $K_{\ell} := K\setminus \{\ell\}$.
Define $E_\ell := \sum_{k \in K_\ell}{\bf e}_{i_k}$.
Therefore,
\[
(\#K - 1)E = \sum_{\ell \in K} E_\ell,
\]
and, by \eqref{eq:thm}, we get
\[
(\#K - 1)\mathcal{C}(\#K)= (\#K - 1)\|E\| = \big\|\sum_{\ell \in K} E_\ell\big\| \le \sum_{\ell \in K}\| E_\ell\|= \sum_{\ell \in K} \mathcal{C}(\#K - 1) = \#K\mathcal{C}(\#K - 1).
\]

Since $K\subset\{1,\dots,N\}$ is arbitrary, we obtain, $\forall x \in \{2,\dots,N\}$,

\[
\frac{C(x)}{x} \le \frac{C(x - 1)}{x - 1},
\]
and, by induction,
\[
\frac{C(x)}{x} \le \frac{C(y)}{y}, \text{ if } 1\leq y \leq x.
\]
\end{proof}
It is well-known that, if $\mathcal{C}: [0,\infty)\to[0,\infty)$ is concave and $\cost(0)=0$, then the quantity $\frac{\mathcal{C}(x)}{x}$ is non-increasing. Hence we obtain the following corollary.
\begin{cor}
In the case $m = 1$, Theorem \ref{thm:norm} holds if ${\rm (iii')}$ is replaced by the request that $\mathcal{C}$ coincide on $\N$ with a concave function.
\end{cor}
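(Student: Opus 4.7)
The plan is to deduce this corollary directly from Theorem \ref{thm:norm} by verifying that the concavity hypothesis automatically implies property ${\rm (iii')}$ of Definition \ref{def:multi-material_cost}. The argument should be essentially immediate once the definitions are unpacked, so I do not foresee any substantial obstacle; the content of the corollary is really just a convenient restatement.

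First I would observe that, in the case $m=1$, every monotone norm on $\R$ is a positive multiple of the Euclidean absolute value $|\cdot|$. Consequently, ${\rm (iii')}$ with $\|\cdot\|_\star = |\cdot|$ amounts to the requirement that the ratio $\cost(x)/|x|$ be non-increasing with respect to the partial order $\preceq$ on $\Z\setminus\{0\}$. Since $\cost$ is even by (i), this is equivalent to asking that the sequence $\{\cost(n)/n\}_{n\in\N_{>0}}$ be non-increasing.

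Next, I would invoke the fact recalled immediately before the corollary statement: if $f:[0,\infty)\to[0,\infty)$ is concave with $f(0)=0$, then $f(x)/x$ is non-increasing on $(0,\infty)$. By hypothesis, $\cost$ coincides on $\N$ with such an $f$, so the sequence $\cost(n)/n$ is non-increasing in $n\in\N_{>0}$. Combining the two steps, property ${\rm (iii')}$ is satisfied (with $\|\cdot\|_\star=|\cdot|$), and an application of Theorem \ref{thm:norm} to $\cost$ produces the monotone norm on $\R^N$ satisfying \eqref{eqn_main}, which is exactly the conclusion of Theorem \ref{thm:norm} under the weaker hypothesis. No further work is needed.
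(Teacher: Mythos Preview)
Your proposal is correct and follows essentially the same route as the paper: the authors simply note (in the sentence immediately preceding the corollary) that concavity with $\cost(0)=0$ forces $\cost(x)/x$ to be non-increasing, which is exactly property ${\rm (iii')}$ for $m=1$ with $\|\cdot\|_\star=|\cdot|$, and then Theorem \ref{thm:norm} applies. Your write-up is slightly more explicit about why every monotone norm on $\R$ is a scalar multiple of $|\cdot|$, but the substance is identical.
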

\begin{remark}
{\rm The previous corollary allows us to include in the list of cost functionals for which Theorem \ref{main} applies the cost considered in \cite{Bran_WirthUP}, which describes a model for the urban planning (or a discrete version of it, in our case). More precisely the cost is $\cost(z)=\min\{az;z+b\}$ with $a>1, b>0$, which is clearly concave.}
\end{remark}

\section{Properties of minimizers}
Most of the regularity properties for classical continuous models of single-material branched transportation, such as single-path properties and finite tree structure away from the boundary (see \cite{Bernot2009}) are deduced using a crucial property of discrete optimal networks, which is the absence of cycles. Even in our case, removing cycles from each of the $m$ components of a competitor for the MMTP does not increase the energy (note that we have used this fact in the proof of Theorem \ref{main}). Nevertheless it might happen that the operation does not \emph{strictly} reduce the energy as well, and in particular minimizers could contain cycles. The aim of this section is to provide a simple example of such phenomenon. 

Consider the multi-material cost
\[
\cost(\theta_1,\theta_2,\theta_3,\theta_4) = \max\{|\theta_1| + |\theta_2| + |\theta_3|, |\theta_4|\}.
\]
Since the cost is additive in the first three variables, it follows that, for every boundary datum $\mathcal B$ whose fourth component is trivial, a solution to the associated MMTP can be obtained as a superposition of the solutions of three single-material problems (see Remark \ref{rem:sum}). Namely those minimization problems whose boundaries are defined respectively by the three components $\mathcal B_1$, $\mathcal B_2$, and $\mathcal B_3$ of $\mathcal B$ (and the corresponding single-material cost is simply $\cost(\theta)=|\theta|$ for $\theta\in\Z$).

Let us now fix a specific boundary $\mathcal{B}$.
Take three non-collinear points $x_1$, $x_2$, and $x_3$ on $\R^2$ and denote
$$\mathcal{B}:=(-1,0,1,0)\delta_{x_1}+(1,-1,0,0)\delta_{x_2}+(0,1,-1,0)\delta_{x_3}.$$
By the discussion above, a minimizer for the MMTP associated to the cost $\cost$ for the boundary $\mathcal{B}$ is the 1-dimensional integral $\Z^4$-current $T$, which is written in component as $T:=(T_1,T_2,T_3,0)$, where
\begin{itemize}
\item[(i)] $T_1:=\llbracket\overline{x_1x_2},\tau_1,1\rrbracket$ is the classical integral current associated to the segment $\overline{x_1,x_2}$ oriented from $x_1$ to $x_2$ with unit multiplicity; 
\item[(ii)] $T_2:=\llbracket\overline{x_2x_3},\tau_2,1\rrbracket$;
\item[(iii)] $T_3:=\llbracket\overline{x_3x_1},\tau_3,1\rrbracket$.
\end{itemize}
Observe also that the 1-dimensional integral $\Z^4$-current $T':=(T_1,T_2,T_3,T_1+T_2+T_3)$ satisfies $\en(T')=\en(T)$, and since $\partial(T_1+T_2+T_3)=0$, it follows that $\partial T'=\partial T$, hence $T'$ is also a minimizer of the MMTP associated to $\mathcal{B}$. Observe that not only $T'$ contains a topological cycle in its support (that property holds for $T$ itself), but the fourth components of $T'$ contains a cycle (actually it \emph{is} a cycle) in the sense of currents.

One could find this example unsatisfactory, because the material associated to the cyclic component of $T'$ does not appear in the boundary datum. Nevertheless it is easy to modify the example above in order to add the fourth material in the boundary datum, still obtaining the previous phenomenon. More precisely, denoting $T_4$ a non-trivial oriented segment which is ``very far'' from the supports of $T_1,T_2$, and $T_3$, then clearly the current $T'':=T'+(0,0,0,T_4)$ is also a minimizer for the corresponding boundary: roughly speaking, even if { in general it would be convenient for the fourth material to interact with the first three, there is no convenience in this case, due to the large distance of the corresponding sources and sinks} {(see Figure \ref{g})}.  
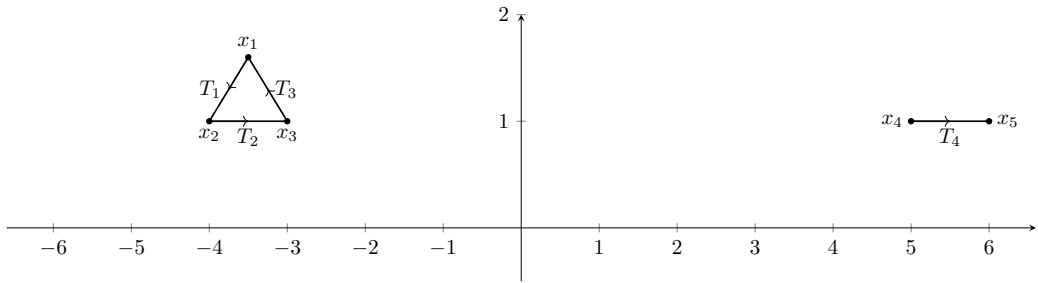
\begin{figure}[h]
\scalebox{0.8}{
\begin{tikzpicture}
   \begin{axis}[axis x line=middle,axis y line=middle,xmin=-6,xmax=6,enlarge x limits = 0.05, ymin=-0.5,ymax=2, height = 6 cm, width = 1.2 \linewidth]
     
\addplot[thick] coordinates{
(-4,1) (-3,1) (-3.5,1.6) (-4,1) };

\addplot[->] coordinates{
(-3.5,1.6) (-3.75,1.3) }
node[left]{$T_1$};

\addplot[->] coordinates{
(-4,1) (-3.5,1) }
node[below]{$T_2$};

\addplot[->] coordinates{
(-3,1) (-3.25,1.3) }
node[right]{$T_3$};

\addplot[->] coordinates{
(5,1) (5.5,1) }
node[below]{$T_4$};

\addplot[thick] coordinates{
(-4,1)}
node[below]{$x_2$};
\addplot[thick] coordinates{
(-4,1)}
node[circle,fill=black,inner sep=0pt,minimum size=3pt]{};
\addplot[thick] coordinates{
(-3,1)}
node[below]{$x_3$};
\addplot[thick] coordinates{
(-3,1)}
node[circle,fill=black,inner sep=0pt,minimum size=3pt]{};
\addplot[thick] coordinates{
(-3.5,1.6)}
node[above]{$x_1$};
\addplot[thick] coordinates{
(-3.5,1.6)}
node[circle,fill=black,inner sep=0pt,minimum size=3pt]{};

\addplot[thick] coordinates{
(5,1) (6,1)};
\addplot[thick] coordinates{
(5,1)}
node[left]{$x_4$};
\addplot[thick] coordinates{
(5,1)}
node[circle,fill=black,inner sep=0pt,minimum size=3pt]{};
\addplot[thick] coordinates{
(6,1)}
node[right]{$x_5$};
\addplot[thick] coordinates{
(6,1)}
node[circle,fill=black,inner sep=0pt,minimum size=3pt]{};

\end{axis}
   
\end{tikzpicture}
}
\caption{{The component $T_4$ does not interact with $T_1, T_2$, and $T_3$ because the corresponding boundaries are too far away.}}
\label{g}
\end{figure}

As it was observed in \cite{Bran_Wirth}, in order to get better properties of minimizers of single-material branched transportation problem it is necessary to require the concavity of the cost. In our case we will require that the cost is concave in every component. In this case it is possible to prove that there exists a solution $T$ of the MMTP whose components $T_i$ $(i=1,\dots,m)$ are all supported on trees (in particular they are acyclic currents). Let us stress that this does not imply the absence of loops in the support of $T$, but only in the support of each $T_i$. More precisely, we have the following two propositions, which are the analogue of \cite[Lemma 2.6, Remark 2.7]{Bran_Wirth}. The proofs are also analogous. We say that a multi-material cost $\cost:\Z^m\to\R$ is \emph{concave} {(resp. \emph{strictly concave})} if it coincides on $\Z^m$ with a function $f:\R^m\to\R$ which is concave {(resp. strictly concave)} in every component, i.e. $f(z_1,\dots,z_m)$ is a concave (resp. strictly concave) function of each variable $z_i$. We call a \emph{tree} a set in $\R^d$ which does not contain the support of any non-trivial closed curve.
\begin{prop}
Let $\cost$ be a concave multi-material cost. Then for every 1-dimensional integral $\Z^m$-current $T$, there exists another current $T'$ with $\partial T'=\partial T$, $\en(T')\leq\en(T)$, and $T'_i$ is supported on a tree for every $i=1,\dots,m$. 
\end{prop}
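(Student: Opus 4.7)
The plan is to remove loops from each component separately, exploiting concavity of $\cost$ in each variable. I fix $i\in\{1,\dots,m\}$ and modify only $T_i$, leaving the other components $T_j$ untouched; this automatically preserves the boundary condition and makes the energy $\en$ a concave function of any one-parameter additive perturbation of $\theta_i$ (since $\cost$ is concave in its $i$-th argument with the other arguments held equal to $\theta_j(x)$).

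First I would apply the structure theorem (Theorem~\ref{thm:stru}) to the classical integral $1$-current $T_i=\llbracket\Sigma,\tau,\theta_i\rrbracket$, writing $T_i=\sum_{k=1}^{M_i}\tilde T_i^k+\sum_{h=1}^\infty \mathring T_i^h$ with additive masses, and I would replace $T_i$ by $\sum_{k=1}^{M_i}\tilde T_i^k$. The boundary is preserved, and mass additivity forces, on each cycle $Z_i^h$, the tangent $\nu_i^h$ to match $\sign(\theta_i)\tau$, so that $|\theta_i(x)|$ decreases pointwise. Since the other components are unchanged, monotonicity of $\cost$ (property~(ii)) gives that the energy does not increase. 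After this, $\spt(T_i)$ is the finite union of the images of $M_i\le \tfrac12\Mass(\partial T_i)$ simple open curves, hence a finite $1$-complex with finite first Betti number.

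Then, as long as $\spt(T_i)$ contains a non-trivial closed curve $\gamma$, I would break this loop by perturbing only $T_i$ via $T_i^{(t)}:=T_i+t\llbracket\gamma,\tau_\gamma,1\rrbracket$ for integer $t$. The boundary is unchanged and the new multiplicity along $\gamma$ becomes $\theta_i(x)+t\langle\tau_\gamma(x);\tau(x)\rangle$. Let $[t^-,t^+]\cap\Z$ be the largest integer interval around $0$ on which this expression does not change sign at any point of $\gamma$. By concavity of $\cost$ in its $i$-th argument, the energy is a concave function of $t$, so $\min(\en(T^{(t^-)}),\en(T^{(t^+)}))\le\en(T)$; at either extremum the $i$-th multiplicity vanishes at some point of $\gamma$, strictly reducing the first Betti number of $\spt(T_i)$. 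Iterating terminates in finitely many steps because the Betti number is a non-negative integer that strictly decreases. Carrying out the above independently for each $i$ (modifying $T_i$ does not affect $T_j$ for $j\ne i$) yields the current $T'$ sought in the statement.

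The main obstacle I anticipate is the bookkeeping around the loop $\gamma$: checking that $\gamma$ can be taken as a Lipschitz closed curve in $\spt(T_i)$ so that $\llbracket\gamma,\tau_\gamma,1\rrbracket$ is a well-defined integral $1$-current, verifying $\Haus^1$-a.e.\ compatibility of the tangents $\tau_\gamma$ and $\tau$ on the overlap, and preserving integrality (which forces $t^\pm\in\Z$, so the concave function of $t$ attains its minimum at an integer). The concavity argument itself is robust to sign choices, since a function concave in $\theta_i$ is also concave in $\theta_i+t\sigma$ for any fixed $\sigma\in\{-1,0,+1\}$, which is why the procedure works uniformly regardless of local orientations.
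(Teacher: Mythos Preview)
Your approach is correct and is exactly what the paper intends: it gives no proof of its own, deferring instead to \cite[Lemma~2.6, Remark~2.7]{Bran_Wirth}, and your two-step scheme---strip the cyclic part of each $T_i$ via the structure theorem and monotonicity of $\cost$, then break remaining loops in $\spt(T_i)$ by perturbing with integer multiples of a cycle and using concavity of $\cost$ in its $i$-th argument while the other $\theta_j$ are frozen---is precisely that analogue. The only additional observation needed in the multi-material case, namely that the $m$ components can be treated independently, you have already made.
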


{
\begin{prop}
Let $\cost$ be a strictly concave multi-material cost. Then for every 1-dimensional integral $\Z^m$-current $T$ which is a solution to the MMTP associated to its boundary, every component $T_i$ of $T$ is supported on a tree. 
\end{prop}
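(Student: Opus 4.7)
The plan is to argue by contradiction using a cycle-perturbation. Suppose $T=\llbracket\Sigma,\tau,\theta\rrbracket$ solves the MMTP but some component $T_i$ is not supported on a tree. Then $\spt(T_i)$ contains (the image of) a non-trivial simple closed Lipschitz curve $\zeta:[0,1]\to\R^d$. Let $Z:=\zeta([0,1])$, let $\nu$ be the unit tangent field induced by $\zeta$, and form the integer $\Z^m$-valued cycle $C:=\llbracket Z,\nu,\mathbf{e}_i\rrbracket$, supported on $Z\subset\spt(T)$ and non-trivial only in its $i$-th component. Since $\zeta$ is closed, $\partial C=0$, so $T^{\pm}:=T\pm C$ are admissible competitors for the MMTP.

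For the key computation, observe that $T^{\pm}$ coincide with $T$ outside $Z$, and that at $\Haus^1$-a.e.\ $x\in Z$ both $\nu(x)$ and $\tau(x)$ are unit tangent to the same rectifiable set, so $\langle\nu(x);\tau(x)\rangle\in\{\pm 1\}$; hence the multiplicities of $T^{\pm}$ at such $x$ are $\theta(x)\pm\mathbf{e}_i$, up to a sign that merely swaps $T^+\leftrightarrow T^-$. Summing gives
\[
\en(T^+)+\en(T^-)-2\en(T)=\int_Z\bigl[\cost(\theta+\mathbf{e}_i)+\cost(\theta-\mathbf{e}_i)-2\cost(\theta)\bigr]\,d\Haus^1.
\]
Letting $f$ denote the strictly-concave-in-each-variable extension of $\cost$, strict concavity of $t\mapsto f(\theta_1,\dots,t,\dots,\theta_m)$ at the three collinear points $\theta_i-1,\theta_i,\theta_i+1$ yields $2f(\theta)>f(\theta+\mathbf{e}_i)+f(\theta-\mathbf{e}_i)$ for every $\theta\in\Z^m$. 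The integrand above is therefore pointwise strictly negative on $Z$, which has positive $\Haus^1$-measure, so the integral is strictly negative and $\min\{\en(T^+),\en(T^-)\}<\en(T)$, contradicting the optimality of $T$.

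The main obstacle I expect is the very first step, namely rigorously producing the simple Lipschitz loop $\zeta$ inside $\spt(T_i)$ from the hypothesis that $T_i$ is not tree-supported; the clean way is to apply Theorem \ref{thm:stru} to the classical integral current $T_i$ and extract a simple closed sub-loop from its support (either directly from a non-trivial cyclic piece $\mathring T_i^h$, or from acyclic pieces $\tilde T_i^k$ that close up topologically into a loop). Everything beyond this is routine: $T\pm C$ is an integral $\Z^m$-current because $C$ has integer coefficients, $\partial(T\pm C)=\partial T$ because $\partial C=0$, and the pointwise strict concavity inequality survives integration because it is strict at every point of $Z$.
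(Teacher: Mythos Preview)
The paper omits the proof, deferring to \cite{Bran_Wirth}; your argument is precisely the standard one used there.

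One point deserves care. Your inequality $2\cost(\theta)>\cost(\theta+\mathbf{e}_i)+\cost(\theta-\mathbf{e}_i)$ for \emph{every} $\theta\in\Z^m$ would require $f$ to be strictly concave in the $i$-th variable on the whole real line, but no such $f$ can agree with a multi-material cost on $\Z^m$: already $f(0)=\cost(0)=0$ and $f(\pm\mathbf{e}_i)=\cost(\mathbf{e}_i)>0$ violate concavity at the origin. The definition should be read, as in \cite{Bran_Wirth} and as for $|z|^\alpha$, as strict concavity on each half-line, and under that reading your inequality fails at points where $\theta_i=0$. You therefore need $|\theta_i|\ge 1$ $\Haus^1$-a.e.\ on $Z$, and an arbitrary simple loop in $\spt(T_i)$ need not have this property: picture infinitely many small cyclic pieces of $T_i$ accumulating along a curve on which $\theta_i$ vanishes. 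Your structure-theorem route does deliver what is needed, because the Federer decomposition $T_i=\sum_k\tilde T_i^k+\sum_h\mathring T_i^h$ of \cite[4.2.25]{Fe1} is mass-additive, which forces $|\theta_i|\ge 1$ a.e.\ on every $\Gamma_i^k$ and every $Z_i^h$. So take $Z=Z_i^h$ if some cyclic piece is present; otherwise $\spt(T_i)$ is the finite union $\bigcup_k\Gamma_i^k$, and any loop contained in it has $|\theta_i|\ge 1$ throughout. Once this is made explicit, the rest of your argument is correct.
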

}

\section{Examples}

In this section, we consider some concrete costs functional $\cost$ and we exhibit a possible norm $\|\cdot\|$ which turns a MMTP associated to such cost into a mass-minimization problem. {At the end of the section we also provide some examples of calibrations.} 

\subsection{Examples of costs}
\begin{itemize}
\item[(1)] \emph{Steiner energy.} For $m=1$, let 
\[
\cost(z):=
\begin{cases}
0, & z = 0,\\
1, & z \neq 0.
\end{cases}
\]
The minimization of the energy $\en$ associated to such cost corresponds to the minimization of the size functional. Clearly the corresponding norm $\|\cdot\|$ on $\R^N$ given by Theorem \ref{main} is simply the supremum norm.

\item[(2)] \emph{Gilbert-Steiner energy.} For $m=1$, fix $0\leq\alpha\leq1$ and let
\[
\cost(z):=
\begin{cases}
0, & z = 0,\\
|z|^\alpha, & z \neq 0.
\end{cases}
\]
The minimization of the corresponding energy $\en$ corresponds to the minimization of the $\alpha$-mass (see e.g. \cite{Xia}). As it is shown in \cite{AnMa2}, the corresponding norm $\|\cdot\|$ on $\R^N$ is the $p$-norm with $p=\frac{1}{\alpha}$.  Note that for $\alpha=0$ we recover the Steiner energy.

\item[(3)] \emph{Linear combinations.} For $m=1$, fix $K\in\N$ and for $k=1,\dots, K$ let $0\leq\alpha_k\leq1$ and let $\lambda_k>0$. Define
\[
\cost(z):=
\begin{cases}
0, & z = 0,\\
\sum_{k=1}^K\lambda_k|z|^{\alpha_k}, & z \neq 0.
\end{cases}
\]
It is easy to see that $\cost$ satisfies properties (i),(ii), and ${\rm (iii')}$ of Definition \ref{def:multi-material_cost}. The corresponding norm $\|\cdot\|$ on $\R^N$ is $\|x\|=\sum_{k=1}^K\lambda_k|x|^{p_k}$, where $p_k=\frac{1}{\alpha_k}$. Such a cost is considered for example in \cite{Cham_Mer_Fer} in order to approximate the Steiner energy and to perform numerical simulations.

\item[(4)] \emph{Supremum of costs.} For $m=1$, fix $K\in\N$ and for $k=1,\dots, K$ let $\cost_k$ be a cost functional satisfying properties (i),(ii), and ${\rm (iii')}$ of Definition \ref{def:multi-material_cost}. Define
\[
\cost(z):=\max_{k=1,\dots,K}\cost_k(x).
\]
The corresponding norm $\|\cdot\|$ on $\R^N$ is the maximum of the norms associated to each $\cost_k$. 
\item[(5)] \emph{PLC technology.} For $m=2$, let $0<\alpha_1\ll\alpha_2\leq 1$. Define
\[
\cost(z_1,z_2):=\max\{\lambda_1|z_1|^{\alpha_1};\lambda_2|z_2|^{\alpha_2}\},
\]
with $\lambda_1,\lambda_2>0$. A monotone norm $\|\cdot\|$ on $\R^N$ which satisfies \eqref{eqn_main} is 
$$\|(x_1,\dots,x_{N_1},y_1,\dots,y_{N_2})\|=\max\{\lambda_1|(x_1,\dots,x_{N_1})|_{p_1};\lambda_2|(y_1,\dots,y_{N_2})|_{p_2}\}.$$
where $p_i=\alpha_i^{-1}$ for $i=1,2$. The fact that $\alpha_1\ll\alpha_2$ express the idea that once the infrastructure transporting the second material (i.e., the electricity) is built one can add ``almost any'' quantity of the first material (i.e., Internet signal) for free.
\item[(6)] \emph{Composite multi-material costs.} For general $m\geq 2$, consider any monotone norm $|\cdot|_{\star}$ in $\R^m$ and single-material costs $\cost_1,\dots,\cost_m: \Z\to \R$, associated to monotone norms $\|\cdot\|^1, \dots, \|\cdot\|^{m}$ on $\R^{N_1},\dots,\R^{N_m}$, respectively. Define
\[
\cost(z_1,\dots,z_m):= |(\mathcal{C}_1(z_1),\dots,\mathcal{C}_m(z_m))|_\star.
\]
A monotone norm $\|\cdot\|$ on $\R^N$ which satisfies \eqref{eqn_main} for the multi-material cost $\cost$ is 
$$\|(x_1,\dots,x_N)\|= |(\|(x_1,\dots,x_{N_1})\|^{1},\dots,\|(x_{N-N_m+1},\dots,x_{N})\|^{m})|_\star.$$
Observe that the cost associated to the PLC technology corresponds to the choice $|\cdot|_\star =\|\cdot\|_\infty$ on $\R^2$, $\cost_1(z)=\lambda_1|z|^{\alpha_1}$, and $\cost_2(z)=\lambda_2|z|^{\alpha_2}$.

\item[(7)] \emph{Mailing problem.} For general $m\geq 2$ and $\alpha>0$ consider the following cost
\[
\cost(z_1,\dots,z_m):=\Big(\sum_{i:z_i\geq 0}z_i\Big)^{\alpha}+\Big|\sum_{i:z_i< 0}z_i\Big|^{\alpha},
\]
Observe that this multi-material cost does not satisfy \eqref{supersymm}. A monotone norm $\|\cdot\|$ on $\R^N$ which satisfies \eqref{eqn_main} is clearly
$$\|(x_1,\dots,x_{N})\|=\|x_+\|_{\ell^p}+\|x_-\|_{\ell^p}$$
where $p=\alpha^{-1}$ and $x_+$ (respectively $x_-$) is obtained by $x$ setting all the negative (respectively positive) coordinates of $x$ equal to zero. Such cost is well-suited to give a better description of the discrete \emph{mailing problem} (see \cite{Bernot2009}), encoding the fact that, on every branch of a transportation network, there is a gain in the cost of the transportation in grouping particles flowing with the same orientation, but there should be no gain for two groups of particles flowing with opposite orientations.  
\end{itemize}

{\subsection{Examples of calibrations}
We now focus on elementary multi-material transportation problems with different costs, for which we are able to exhibit constant calibrations. 

\begin{itemize}
\item[(1)]\emph{Mailing problems with Steiner cost.} 
Let us consider the multi-material cost $\cost:\Z^2\to\R$ defined by $\cost(x,y)=|x|^0+|y|^0$, where we mean that $0^0=0$. 

Firstly, we consider the vertices of an isosceles triangle in $\R^2$, for instance $p_1:=(\ell,h), p_2:=(\ell,-h)$ and $p_3:=(0,0)$ for some positive numbers $h,\ell$ with $\ell\geq\sqrt{3}h$, and we fix as a boundary $$\mathcal{B}:=(-1,-1)\delta_{p_3}+ (1,0)\delta_{p_1} +(0,1)\delta_{p_2}.$$ 

A solution to the MMTP associated to such boundary and cost is a 1-dimensional integral $\Z^2$-current supported on a Y-shaped graph, with angles of $2\pi/3$ between the segments at the junction point {(see Figure \ref{h})}. 
\begin{center}
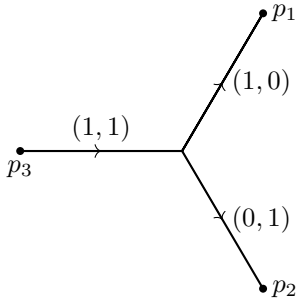
\begin{figure}[h]
\begin{tikzpicture}
   \begin{axis}[axis x line=none,axis y line=none,xmin=-2.4,xmax=2.4,
    ymin=-2,ymax=2,]
      \addplot[thick]coordinates{
(-1.5,0) (0,0) (3/4,3/4*1.71) (0,0)  (3/4,-3/4*1.71)};
\addplot[->]coordinates{
(0,0) (3/8,-3/8*1.71)}
node[right]{$(0,1)$};
\addplot[->]coordinates{
(0,0) (3/8,3/8*1.71)}
node[right]{$(1,0)$};
\addplot[->]coordinates{
(-3/2,0) (-3/4,0) }
node[above]{$(1,1)$};
\addplot[]coordinates{
(-3/2,0)}
node[below]{$p_3$};
\addplot[]coordinates{
(-3/2,0)}
node[circle,fill=black,inner sep=0pt,minimum size=3pt]{};
\addplot[]coordinates{
(3/4,3/4*1.71)}
node[right]{$p_1$};
\addplot[]coordinates{
(3/4,3/4*1.71)}
node[circle,fill=black,inner sep=0pt,minimum size=3pt]{};
\addplot[]coordinates{
(3/4,-3/4*1.71)}
node[right]{$p_2$};
\addplot[]coordinates{
(3/4,-3/4*1.71)}
node[circle,fill=black,inner sep=0pt,minimum size=3pt]{};
   \end{axis} 
\end{tikzpicture}
\caption{{A solution to the mailing problem with Steiner cost for the boundary $\mathcal{B}$.}}
\label{h}
\end{figure}
\end{center}

In order to translate this into a MMP, we endow $\R^2$ with the norm $\|\cdot\|$ which has the unit ball depicted in Figure \ref{Steiner_ball}. 

\begin{center}
\begin{figure}[h!]
\begin{tikzpicture}
   \begin{axis}[axis x line=middle,axis y line=middle,xmin=-2.4,xmax=2.4,ticks = none,
    ymin=-2,ymax=2,]
      \addplot[thick,name path=X,fill = gray,opacity =0.2] coordinates{
(-1,0) (0,1) (1,1) (1,0)  (0,-1) (-1,-1) (-1,0)};
 
   \addplot[name path=X] coordinates{
(-1,0) (0,1) (1,1) (1,0)  (0,-1) (-1,-1) (-1,0)};

  \addplot[] coordinates{
(1,0)}
node[below right] {1};
\addplot[] coordinates{
(-1,0)}
node[below left] {-1};
\addplot[] coordinates{
(0,1)}
node[above right] {1};
\addplot[] coordinates{
(0,-1)}
node[below right] {-1};
   \end{axis} 
\end{tikzpicture}
\caption{Unit ball for the norm $\|\cdot\|$ associated to the cost $\cost$.}
\label{Steiner_ball}
\end{figure}
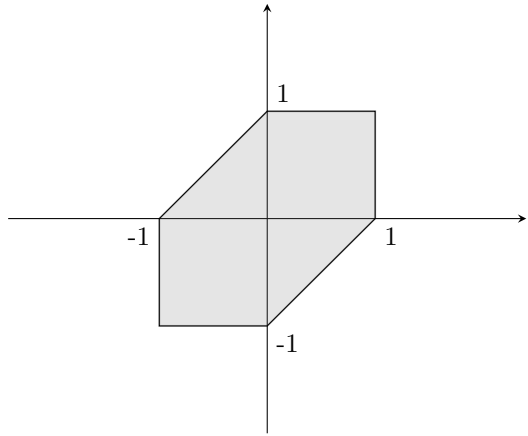
\end{center}

In this special case we have to solve a mass-minimization problem for the same boundary $\mathcal{B}$. Now we show that the $\R^2$-valued differential 1-form represented by the matrix
\begin{equation}\label{calib_z2}
\omega_1:=\left(\begin{array}{lr}
\frac 12 & \frac{\sqrt 3}{2} \\
\frac 12 & -\frac{\sqrt 3}{2}
\end{array}\right)
\end{equation} is a calibration for the minimizer. Indeed $$\langle\omega_1;\left(1/2,\sqrt 3/2\right),(1,0)\rangle=\langle\omega_1;\left(1/2,-\sqrt 3/2\right),(0,1)\rangle=\langle\omega_1;(1,0),(1,1)\rangle=1$$ and the form is constant, hence properties (i) and (ii) in Definition \ref{def:calib} are fulfilled. Moreover, to check (iii), notice that, for every $\phi\in\R$ and every pair of $(g_1,g_2)\in\R^2$ with $\|(g_1,g_2)\|= 1$, we have
$$\left|\langle\omega_1;(\cos\phi,\sin\phi),(g_1,g_2)\rangle\right| =\left|\left(\frac 12\cos\phi+\frac{\sqrt 3}{2}\sin\phi\right)g_1+\left(\frac 12\cos\phi-\frac{\sqrt 3}{2}\sin\phi\right)g_2\right| \le 1\,,$$
where the inequality can be inferred from that fact that the expression in the absolute value is linear in $(g_1,g_2)$ and takes its maximum at some extremal point of the set depicted in Figure \ref{Steiner_ball}, where the values are $\pm\left(\frac 12\cos\phi+\frac{\sqrt 3}{2}\sin\phi\right)=\pm\left(\sin\left(\frac{\pi}{6}+\phi\right)\right), \pm\left(\frac 12\cos\phi-\frac{\sqrt 3}{2}\sin\phi\right)=\pm\left(\sin\left(\frac{\pi}{6}-\phi\right)\right)$, and $\pm\cos\phi$.

Let us now fix as a boundary (supported on the same points) $$\mathcal{B}':=(1,-1)\delta_{p_3}+ (-1,0)\delta_{p_1} +(0,1)\delta_{p_2}.$$ 
A minimizer in this case is supported in the union of the two segments joining $p_1$ to $p_3$ and $p_3$ to $p_2$, respectively {(see Figure \ref{i})}.

\begin{center} 
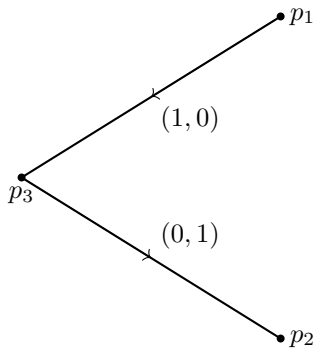
\begin{figure}[h]
\begin{tikzpicture}
   \begin{axis}[axis x line=none,axis y line=none,xmin=-2,xmax=2,
    ymin=-2,ymax=2,]
      \addplot[thick]coordinates{
(1,-1.5)(-1,0) (1,1.5)};
\addplot[->]coordinates{
(-1,0) (0,-0.75)}
node[above right]{$(0,1)$};
\addplot[->]coordinates{
(1,1.5)(0,0.75) }
node[below right]{$(1,0)$};

\addplot[]coordinates{
(-1,0)}
node[below]{$p_3$};
\addplot[]coordinates{
(-1,0)}
node[circle,fill=black,inner sep=0pt,minimum size=3pt]{};
\addplot[]coordinates{
(1,-1.5)}
node[right]{$p_2$};
\addplot[]coordinates{
(1,-1.5)}
node[circle,fill=black,inner sep=0pt,minimum size=3pt]{};
\addplot[]coordinates{
(1,1.5)}
node[right]{$p_1$};
\addplot[]coordinates{
(1,1.5)}
node[circle,fill=black,inner sep=0pt,minimum size=3pt]{};
   \end{axis} 
\end{tikzpicture}
\caption{{{A solution to the mailing problem with Steiner cost for the boundary $\mathcal{B}'$.}}}
\label{i}
\end{figure}
\end{center}

A calibration is the $\R^2$-valued differential $1$-form represented by the matrix 
\[
\omega_2:=\left(\begin{array}{rr}
-\cos \theta & -\sin\theta \\
\cos\theta & -\sin\theta
\end{array}\right)\,,
\] 
where $\theta$ is the positive angle between the segment $\overline{p_3p_1}$ and the horizontal axis. Again, properties (i) and (ii) of Definition \ref{def:calib} are verified by construction of this $\R^2$-valued constant $1$-form. Moreover, to test (iii), we notice that, for every $\phi\in\R$ and every pair $(g_1,g_2)\in\R^2$ with $\|(g_1,g_2)\|= 1$, we have

$$\left|\langle\omega_2;(\cos\phi,\sin\phi),(g_1,g_2)\rangle\right| =\left|-(\cos\theta\cos\phi+\sin\theta\sin\phi)g_1+(\cos\theta\cos\phi-\sin\theta\sin\phi)g_2\right|\le 1\,,$$
again because the expression in the absolute value is linear in $(g_1,g_2)$ and takes its maximum at some extremal point of the set depicted in Figure \ref{Steiner_ball}, where the values are $\pm(\cos\theta\cos\phi+\sin\theta\sin\phi)=\pm\cos(\theta-\phi)$, $\pm(\cos\theta\cos\phi-\sin\theta\sin\phi)=\pm\cos(\theta+\phi)$, and $\pm 2\sin(\theta)\sin(\phi)$ (observe that $\sin(\theta)\leq\frac{1}{2}$ due to the choice of $h$ and $\ell$).

Lastly, let us consider a boundary datum which is supported on the vertices of a square, say $q_1:=(0,1),q_2:=(1,1),q_3:=(1,0),q_4:=(0,0)$. We set $$\mathcal{B}'':= (0,-1)\delta_{q_1}+(1,0)\delta_{q_2}+(0,1)\delta_{q_3}+(-1,0)\delta_{q_4}.$$ A minimizer is given in {Figure \ref{lz}} and it is supported on the set $\Sigma_1$, which is one of the two well-known solutions to the Steiner tree problem for the vertices of the square. This is calibrated by the same $\omega_1$ as in \eqref{calib_z2}, for which all the checks have already been done.
\begin{center}
\begin{figure}[h]
\begin{tikzpicture}
   \begin{axis}[axis x line=none,axis y line=none,xmin=-2.4,xmax=2.4,
    ymin=-2,ymax=2,]
      \addplot[thick]coordinates{
(-5/4,3/4*1.71) (-4/8,0) (-5/4,-3/4*1.71) (-0.5,0) (0.5,0) (5/4,3/4*1.71) (0.5,0)  (5/4,-3/4*1.71)};

\addplot[]coordinates{
(-5/4,3/4*1.71)}
node[circle,fill=black,inner sep=0pt,minimum size=3pt]{};
\addplot[]coordinates{
(-5/4,3/4*1.71)}
node[above]{$q_1$};
\addplot[]coordinates{
(-5/4,-3/4*1.71)}
node[circle,fill=black,inner sep=0pt,minimum size=3pt]{};
\addplot[]coordinates{
(-5/4,-3/4*1.71)}
node[below]{$q_4$};
\addplot[]coordinates{
(5/4,3/4*1.71)}
node[circle,fill=black,inner sep=0pt,minimum size=3pt]{};
\addplot[]coordinates{
(5/4,3/4*1.71)}
node[above]{$q_2$};
\addplot[]coordinates{
(5/4,-3/4*1.71)}
node[circle,fill=black,inner sep=0pt,minimum size=3pt]{};
\addplot[]coordinates{
(5/4,-3/4*1.71)}
node[below]{$q_3$};

\addplot[->]coordinates{
(-5/4,3/4*1.71) (-7/8,3/8*1.71)}
node[left]{$(0,1)$};
\addplot[->]coordinates{
(-5/4,-3/4*1.71) (-7/8,-3/8*1.71)}
node[left]{$(1,0)$};
\addplot[->]coordinates{
(1/2,0) (7/8,3/8*1.71)}
node[right]{$(1,0)$};
\addplot[->]coordinates{
(1/2,0) (7/8,-3/8*1.71)}
node[right]{$(0,1)$};
\addplot[->]coordinates{
(-1/2,0) (0,0)}
node[above]{$(1,1)$};
   \end{axis} 
\end{tikzpicture}
\caption{{A solution to the mailing problem with Steiner cost for the boundary $\mathcal{B}''$.}}
\label{lz}
\end{figure}
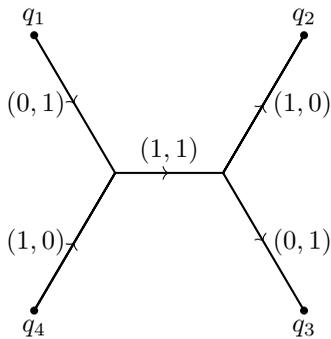
\end{center}   
Observe also that the other solution $\Sigma_2$ to the Steiner tree problem (namely the one obtained from a rotation of $\Sigma_1$ by $90^\circ$) does not support any solution to the MMTP for the boundary $\mathcal{B}''$. Indeed there is only a 1-dimensional integral $\Z^2$-current $T$ with $\partial T=\mathcal{B}''$ supported on $\Sigma_2$, and by direct computation one can see that this is not a minimizer. Note that $T$ has a vertical stretch (oriented by the vector $(0,1)$) carrying the multiplicity $(-1,1)$ and we have $\langle\omega_1;(0,1),(-1,1)\rangle=\sqrt{3}$, hence property (i) of Definition \ref{def:calib} is not satisfied. Since a calibration always calibrates all the minimizers of the problem, this is another proof that $T$ is not a minimizer.  

\item[(2)]\emph{Mailing problems with Gilbert-Steiner cost.} Let us consider the multi-material cost $\cost:\Z^2\to\R$ defined by $\cost(x,y)=\sqrt{|x|^2+|y|^2}$. Now consider the boundary datum 
$$\mathcal{B}:=(N,0)\delta_{r_1}+(0,1)\delta_{r_2}+(-N,-1)\delta_{r_3},$$ where $r_1:=(0,1), r_2:=(1,0),r_3:=(-\cos\tilde\theta,-\sin\tilde\theta)$, with 
$$
\tilde\theta:=\arccos \left(\frac{1}{\sqrt{N^2+1}}\right)\,.
$$ 
Then the Y-shaped graph made by three segments joining at the origin $(0,0)$ supports a solution of the multi-material transport problem {(see Figure \ref{m})}.

\begin{center}
\begin{figure}[h]
\begin{tikzpicture}
   \begin{axis}[axis x line=none,axis y line=none,xmin=-2.4,xmax=2.4,
    ymin=-2,ymax=2,]
      \addplot[thick]coordinates{
(0,1.5) (0,0) (-1/2,-1.4142 ) (0,0)  (1.5,0)};
\addplot[->]coordinates{
(0,0) (0,0.75)}
node[right]{$(N,0)$};
\addplot[->]coordinates{
(0,0) (0.75,0)}
node[above]{$(0,1)$};
\addplot[->]coordinates{
(-1/2,-1.4142 ) (-1/4,-1.4142 /2) }
node[right]{$(N,1)$};
\addplot[]coordinates{
(3/2,0)}
node[below]{$r_2$};
\addplot[]coordinates{
(0,1.5)}
node[circle,fill=black,inner sep=0pt,minimum size=3pt]{};
\addplot[]coordinates{
(0,1.5)}
node[above]{$r_1$};
\addplot[]coordinates{
(-1/2,-1.4142)}
node[circle,fill=black,inner sep=0pt,minimum size=3pt]{};
\addplot[]coordinates{
(-1/2,-1.4142)}
node[right]{$r_3$};
\addplot[]coordinates{
(3/2,0)}
node[circle,fill=black,inner sep=0pt,minimum size=3pt]{};
   \end{axis} 
\end{tikzpicture}
\caption{{A solution to the mailing problem with the Gilbert-Steiner cost for the boundary $\mathcal{B}$.}}
\label{m}
\end{figure}
\end{center}

To prove this, firstly we observe that the associated mass-minimization problem for currents with coefficients in $\Z^{N+1}$ has boundary equal to
$$(-1,\dots,-1,-1)\delta_{r_3}+(1,\dots,1,0)\delta_{r_1}+(0,\dots,0,1)\delta_{r_2},$$
and a possible norm $\|\cdot\|$ on $\R^{N+1}$ associated to $\cost$ is 
$$\|(g_1,\dots,g_{N+1})\|=\left|\left(\sum_{j=1}^N|g_j|,g_{N+1}\right)\right|.$$
Next we check that a constant calibration for such MMP is given by the $\R^{N+1}$-valued differential 1-form represented by
\[
\omega:=\left(
\begin{array}{cc}
0 & 1\\
\vdots & \vdots \\
0 & 1\\
1 & 0
\end{array}
\right)\,.
\] 
In fact, (i) holds since
$$\left\langle\omega;\left(\frac{1}{\sqrt {N^2+1}},\frac{N}{\sqrt {N^2+1}}\right),(1\dots,1,1)\right\rangle=\sqrt{N^2+1}=\cost(N,1),$$
$$\langle\omega;(0,1),(1,\dots,1,0)\rangle=N=\cost(N,0)$$
and
$$\langle\omega;(1,0),(0,\dots,0,1)\rangle=1=\cost(0,1).$$
Property (ii) is satisfied, as usual, because $\omega$ is constant. Moreover, for every $\phi\in\R$ and every $(g_1,\ldots,g_{N+1})\in\R^{N+1}$ with $\|(g_1,\ldots,g_{N+1})\|= 1$ one has
$$
|\langle\omega;(\cos\phi,\sin\phi),(g_1,\ldots,g_{N+1})\rangle|\leq\langle(|\cos\phi|,|\sin\phi|);\big(|g_{N+1}|,\sum_{j=1}^N |g_j|\big)\rangle,
$$ and by the definition of $\|\cdot\|$, the right hand side is the scalar product between two vectors of $\R^2$ having unit Euclidean norm, hence it is bounded by 1.
\item[(3)] For the linear combinations of costs discussed in point (3) of the previous subsection, stepping back to the specific case of \cite{Cham_Mer_Fer}, we take $K=2$ and $\alpha_1=0$, $\alpha_2=1$. Let us also assume that $\lambda_1+\lambda_2=1$. Hence the single-material cost is $\cost(z)=\lambda_1|z|^0+\lambda_2|z|$. 

We consider the irrigation problem with source of multiplicity 2 in the point $p_3$ and targets with multiplicity 1 in the points $p_1,p_2$, where $p_1$,$p_2$, and $p_3$ are as in point (1) of this subsection. As we have already observed, a norm on $\R^2$ which turns this single-material transport problem into a MMP is $\lambda_1\|\cdot\|_\infty+\lambda_2\|\cdot\|_1$. Nevertheless, since we are free to choose any monotone norm which coincides with the above on the positive orthant, then we decide to choose the norm $\|\cdot\|$ whose unit ball { is depicted in Figure \ref{n}} (such choice is aimed at reducing the number of extreme points of the unit ball, which makes it easier to estimate the comass norm of the form).
\begin{center}
\begin{figure}[h]
\begin{tikzpicture}
   \begin{axis}[axis x line=middle,axis y line=middle,xmin=-2.4,xmax=2.4,ticks = none,
    ymin=-2,ymax=2,]
      \addplot[thick,name path=X,fill = gray,opacity =0.2] coordinates{
(-1,0) (0,1) (2/3,2/3) (1,0)  (0,-1) (-2/3,-2/3) (-1,0)};
 
   \addplot[name path=X] coordinates{
(-1,0) (0,1) (2/3,2/3) (1,0)  (0,-1) (-2/3,-2/3) (-1,0)};
  
  \addplot[] coordinates{
(1,0)}
node[below right] {1};
\addplot[] coordinates{
(-1,0)}
node[below left] {-1};
\addplot[] coordinates{
(0,1)}
node[above right] {1};
\addplot[] coordinates{
(0,-1)}
node[below right] {-1};
 \end{axis} 
\end{tikzpicture}

\caption{{A norm associated to the cost $\cost$.}}
\label{n}

\end{figure}
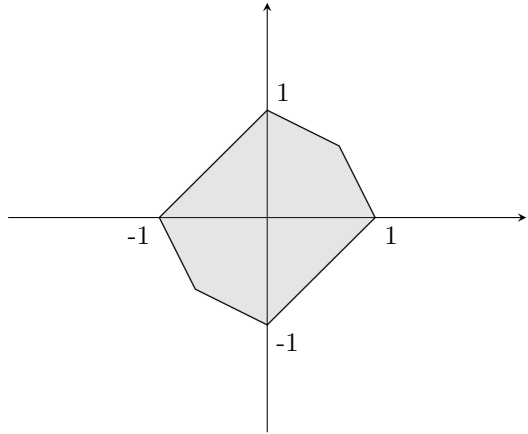
\end{center}

The minimizer for the transportation problem is supported on a Y-shaped graph similar to that shown in point (1), in which the positive angle between the horizontal and the segment joining the branching point to $p_1$ is $\theta=\arccos\left((1+\lambda_2)/2\right)$. A calibration, in this case, is represented by
\[
\omega:=
\left(
\begin{array}{cr}
\frac{1+\lambda_2}{2} & \sqrt{1-\left(\frac{1+\lambda_2}{2}\right)^2} \\
\frac{1+\lambda_2}{2} & -\sqrt{1-\left(\frac{1+\lambda_2}{2}\right)^2}
\end{array}
\right)\,.
\]

To check property (i) of Definition \ref{def:calib} we observe that
$$\langle\omega;(1,0),(1,1)\rangle=1+\lambda_2=\lambda_1+2\lambda_2=\cost(2),$$
$$\langle\omega;(\cos\theta,\sin\theta),(1,0)\rangle=1=\lambda_1+\lambda_2=\cost(1),$$
and
$$\langle\omega;(\cos\theta,-\sin\theta),(0,1)\rangle=1=\lambda_1+\lambda_2=\cost(1).$$
As usual, property (ii) is trivially verified. To check property (iii), we firstly observe that the extreme points of the unit ball for the norm $\|\cdot\|$ are $\pm((1+\lambda_2)^{-1},(1+\lambda_2)^{-1})$, $\pm(1,0)$, and $\pm(0,1)$. Now, for every $\phi\in\R$ we have to check that, whenever $\|(g_1,g_2)\|\leq 1$, it holds
$$\left|\langle\omega;(\cos\phi,\sin\phi),(g_1,g_2)\rangle\right| =\left|(\cos\theta\cos\phi+\sin\theta\sin\phi)g_1+(\cos\theta\cos\phi-\sin\theta\sin\phi)g_2\right|\le 1\,.$$
We observe that the values of the LHS of such inequality at the extreme points above are respectively given by $2(1+\lambda_2)^{-1}|\cos(\theta)\cos(\phi)|=|\cos\phi|$, $|\cos\theta\cos\phi+\sin\theta\sin\phi|=|\cos(\theta-\phi)|$, and $|\cos\theta\cos\phi-\sin\theta\sin\phi|=|\cos(\theta+\phi)|$. Therefore property (iii) is verified.
\end{itemize}

\begin{remark}[Sum of single-material costs]\label{rem:sum}
{\rm To conclude this section, we add a simple, but very useful observation: when the multi-material cost is a composite one (in the sense of point (6) in the previous subsection) but of the form
\[
\cost(z_1,\ldots,z_m)=\cost_1(z_1)+\ldots+\cost_m(z_m)\,,
\]
then the norm $|\cdot|_\star$ in $\R^m$ is the $\ell^1$ norm. Hence roughly speaking, the materials ``do not interact''. More precisely, the minimizer is the sum of the individual minimizers of each (single-material) problem associated to the cost $\cost_i$. This remark matches with the fact that the Monge-type optimal transport of atomic measures is made of ``independent'' segments joining directly the points at the boundary. Moreover, if one can calibrate with $\omega_i$ the problem concerning the $i$-th material with cost $\cost_i$, then a calibration of the global problem is a block-diagonal matrix where each block is given by $\omega_i$.}
\end{remark}
}

\newpage

\bibliographystyle{acm}
\bibliography{Bib_Ann}
\end{document}